\newtheorem{theorem}{Theorem}[section]
\newtheorem{lemma}[theorem]{Lemma}
\newtheorem{corollary}[theorem]{Corollary}
\newtheorem{proposition}[theorem]{Proposition}
\numberwithin{equation}{section}
\theoremstyle{definition}
\newtheorem{definition}[theorem]{Definition}
\newtheorem*{example*}{Example}
\newtheorem{example}[theorem]{Example}
\newtheorem{remark}[theorem]{Remark}
\newtheorem*{remark*}{Remark}
\newcommand\frsl{\mathfrak{sl}}
\newcommand\frso{\mathfrak{so}}
\newcommand\frsp{\mathfrak{sp}}
\newcommand\frsu{\mathfrak{su}}
\newcommand{\calA}{\mathcal{A}}
\newcommand{\calJ}{\mathcal{J}}
\newcommand{\calT}{\mathcal{T}}
\newcommand{\frg}{\mathfrak{g}}
\newcommand{\fra}{\mathfrak{a}}
\newcommand{\frb}{\mathfrak{b}}
\newcommand{\frc}{\mathfrak{c}}
\newcommand{\frs}{\mathfrak{s}}
\newcommand{\frd}{\mathfrak{d}}
\newcommand{\fri}{\mathfrak{i}}
\newcommand{\frm}{\mathfrak{m}}
\newcommand{\fru}{\mathfrak{u}}
\newcommand{\frv}{\mathfrak{v}}
\newcommand{\frz}{\mathfrak{z}}
\newcommand{\frr}{\mathfrak{r}}
\newcommand{\bF}{\mathbb{F}}
\newcommand{\bC}{\mathbb{C}}
\newcommand{\bR}{\mathbb{R}}
\newcommand{\bZ}{\mathbb{Z}}
\newcommand{\bN}{\mathbb{N}}
\newcommand{\id}{{\mathrm{id}}} 
\newcommand{\bi}{\mathbf{i}}
\DeclareMathOperator{\Der}{\mathrm{Der}}
\DeclareMathOperator{\End}{\mathrm{End}}
\DeclareMathOperator{\Hom}{\mathrm{Hom}}
\DeclareMathOperator{\trace}{trace}
\begin{document}

\title[Quadratic Lie algebras of low dimension]{Classification of quadratic Lie algebras of low dimension}

\author[Sa\"id Benayadi]{Sa\"id Benayadi}

\address{Universit\'e de Lorraine, IECL, CNRS UMR 7502, Ile
du Saulcy, F-57045 Metz cedex, France} \email{said.benayadi@univ-lorraine.fr}

\author[Alberto Elduque]{Alberto Elduque$^{\star}$}
\thanks{$^{\star}$ Supported by the Spanish
Ministerio de Econom\'{\i}a y Competitividad and FEDER (MTM2010-18370-C04-02) and by
the Diputaci\'on General de Arag\'on---Fondo Social Europeo (Grupo de Investigaci\'on de \'Algebra)}
\address{Departamento de Matem\'aticas e
Instituto Universitario de Matem\'aticas y Aplicaciones, Universidad de
Zaragoza, 50009 Zaragoza, Spain} \email{elduque@unizar.es}

\begin{abstract}
In this paper we give the classification of the irreducible non solvable Lie algebras of
dimensions $\leq 13$ with nondegenerate, symmetric and
invariant bilinear forms.
\end{abstract}

\keywords{Quadratic Lie algebras; double extension; $T^\star$-extension; finite dimensional representations of $\frsl_2(\bF)$.}

\subjclass[2010]{17B05; 17B60, 17B63}

\date{April 21, 2014}

\maketitle


\section{Introduction}

In this work, we consider finite-dimensional Lie algebras over a field $\bF$ of characteristic $0$. These restrictions will be assumed throughout the paper.

A Lie algebra $\frg$ is said to be \emph{quadratic} (some authors call these \emph{quasi-classical}) if it is endowed with a symmetric nondegenerate invariant bilinear form $B$. The invariance means $B([x,y],z)=B(x,[y,z])$ for any $x,y,z\in \frg$.
Quadratic Lie algebras appear in many areas of Mathematics and Physics. In particular, the importance of quadratic Lie algebras in
conformal field theory has to do with following fact \cite{FS}: quadratic Lie algebras are precisely
the Lie algebras for which a Sugawara construction exists.

To study the structure of quadratic Lie algebras,
A. Medina and Ph. Revoy introduced in \cite{m-r85} the concept of \emph{double extension}. Using this notion, they gave
an inductive description of these algebras. Another interesting concept, the \emph{$T^\star$-extensions}, was given
by M.~Bordemann in \cite{borde}. These two concepts are very useful tools to construct examples and to study the structure of quadratic Lie algebras. But, even with the use of these concepts,
the classification (up isomorphism) of quadratic Lie algebras of dimension $\geq 7$ is not easy.
In this paper, we give the classification of the non solvable irreducible   quadratic Lie algebras of
dimensions $\leq 13$, both over algebraically closed fields and over the real field To obtain this classification, we show that, assuming $\bF$ is algebraically closed, the Levi component of a non solvable irreducible
quadratic Lie algebra is $\frsl_2(\bF)$ in most cases, and then we use the well-known representation theory of $\frsl_2(\bF)$.

Recall that the classification of the nilpotent quadratic Lie algebras of dimensions
$\leq 7$ is obtained in \cite{FavS}, the classification of the real non solvable irreducible quadratic Lie algebras of
dimensions $\leq 9$ and the classification of the real solvable  quadratic Lie algebras of
dimensions $\leq 6$ are obtained in \cite{Otto}.

In Section \ref{se:preliminaries} the main definitions and some useful results are recalled. Any quadrat6ic Lie algebra is an orthogonal sum of \emph{irreducible} quadratic Lie algebras, so it is enough to classify these.
Then in Section \ref{se:closed} the classification of the nonsolvable irreducible quadratic Lie algebras of dimension $\leq 13$ over an algebraically closed field is given, while Section \ref{se:real} is devoted to the real case.

\medskip

\section{Definitions and preliminary results}\label{se:preliminaries}

\begin{definition}
\begin{enumerate}

\item A bilinear form $B$ on a Lie algebra $(\frg,[\,,\,])$ is said to be \emph{invariant} if
\[
B([x,y],z)= B(x,[y,z]),
\]
for any $x,y,z  \in {\frg}$.

\item Let $({\frg},[\,,\,])$ be a Lie algebra, ${\frg}$ is called \emph{quadratic} if it admits a nondegenerate, symmetric and invariant bilinear form $B$.
In this case, $B$ is called an \emph{invariant scalar product} on ${\frg}$.

\item Let ${\frg}$ be a quadratic Lie algebra and let $B$ be an invariant scalar product on  ${\frg}$. An ideal $\fri$ of ${\frg}$ is called \emph{$B$-nondegenerate} (or just \emph{nondegenerate}) if $B_{\vert_{\fri\times \fri}}$ is nondegenerate.

\item Let ${\frg}$ be a quadratic Lie algebra and let $B$ be an invariant scalar product on  ${\frg}$. We say that ${\frg}$  is \emph{$B$--irreducible} (or just \emph{irreducible}) if $\frg$ contains no non-trivial $B$-nondegenerate ideals.
\end{enumerate}
\end{definition}

The study of quadratic Lie algebras is reduced to those having no non-trivial nondegenerate ideals. Indeed, if
we consider a quadratic Lie algebra $\frg$ with an invariant scalar product $B$, and
$\fri$ is an ideal of $\frg$, then $\fri^\perp$ is an ideal of $\frg$ because $B$ is invariant
(Here $\fri^\perp$ denotes the orthogonal subspace to $\fri$ relative to $B$).
Now  if $\fri$ is a $B$-nondegenerate ideal, then $\frg=\fri\oplus\fri^\perp$, and both
ideals: $\fri$ and $\fri^\perp$, are quadratic Lie algebras. Hence $\frg$ is easily seen to be an
orthogonal direct sum of irreducible quadratic Lie algebras.

\begin{proposition}[\cite{m-r85}]\label{LEM}
Let $({\frg},B)$ be a quadratic Lie algebra.
Then:
\begin{enumerate}
\item $[{\frg},{\frg}]^\perp= {\frz}({\frg})$,  where ${\frz}({\frg})$ is the center of ${\frg}$.

\item If $\fri$ is a centerless ideal of  ${\frg},$ then $\fri$ is a nondegenerate ideal of $\frg$.
\end{enumerate}
\end{proposition}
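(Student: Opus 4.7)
The plan for part (1) is to unfold both inclusions directly from invariance and nondegeneracy. An element $x\in\frg$ lies in $[\frg,\frg]^\perp$ exactly when $B(x,[y,z])=0$ for all $y,z\in\frg$. By invariance this reads $B([x,y],z)=0$ for all $y,z$, which by the nondegeneracy of $B$ on $\frg$ is equivalent to $[x,y]=0$ for every $y$, i.e.\ to $x\in\frz(\frg)$. So (1) is a single chain of equivalences; no actual content beyond the definitions is required.

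For part (2) I would first remark that $\fri^\perp$ is itself an ideal (a standard consequence of invariance: for $g\in\frg$, $y\in\fri^\perp$, $z\in\fri$ one has $B([g,y],z)=-B(y,[g,z])=0$ because $[g,z]\in\fri$). Then $\fri\cap\fri^\perp$ is an ideal of $\frg$ contained in $\fri$, and the whole job is to show it is zero. My plan is to prove the stronger statement $\fri\cap\fri^\perp\subseteq\frz(\fri)$; combined with the hypothesis $\frz(\fri)=0$ this gives the nondegeneracy $\fri\cap\fri^\perp=0$, which is exactly what it means for $B|_{\fri\times\fri}$ to be nondegenerate.

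To establish that inclusion, take $x\in\fri\cap\fri^\perp$ and any $y\in\fri$. For an arbitrary $z\in\frg$, invariance gives $B([x,y],z)=B(x,[y,z])$; since $\fri$ is an ideal, $[y,z]\in\fri$, and since $x\in\fri^\perp$, the right-hand side vanishes. Thus $B([x,y],z)=0$ for every $z\in\frg$, so the nondegeneracy of $B$ on $\frg$ forces $[x,y]=0$. As $y\in\fri$ was arbitrary, $x\in\frz(\fri)$, completing the inclusion.

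I do not anticipate a genuine obstacle here; the only subtlety is remembering to test $[x,y]$ against all of $\frg$ (using the nondegeneracy of $B$ on the ambient algebra) rather than only against $\fri$, because at this stage one does not yet know $B|_{\fri\times\fri}$ is nondegenerate—indeed, that is precisely the conclusion being proved.
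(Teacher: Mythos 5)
Your proposal is correct and follows essentially the same route as the paper: part (1) is the same chain of equivalences via invariance and nondegeneracy, and in part (2) both arguments use invariance plus the nondegeneracy of $B$ on all of $\frg$ to show $\fri\cap\fri^\perp\subseteq\frz(\fri)$ (the paper phrases it as $[\fri,\fri^\perp]=0$, which yields the same inclusion). Your closing caution---testing $[x,y]$ against all of $\frg$ rather than only $\fri$---is exactly the right point to flag, and nothing is missing.
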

\begin{proof}
Note that $x\in [\frg,\frg]^\perp$ if and only if $B(x,[\frg,\frg])=0$, if and only if $B([x,\frg],\frg)=0$, if and only if (since $B$ is nondegenerate) $[x,\frg]=0$.

For the second part note that for any ideal $\fri$, $B([\fri,\fri^\perp],\frg)\subseteq B(\fri,\fri^\perp)=0$, so $[\fri,\fri^\perp]=0$ and hence $\fri\cap\fri^\perp$ is contained in the center of $\fri$.
\end{proof}

This Proposition proves that if ${\frg}$ is a quadratic Lie algebra, then ${\frg}$ is perfect (i.e., $[{\frg},{\frg}]= {\frg}$) if and only if the center of $\frg$
is trivial.

Now, let us recall the concept of \emph{double extension} in the case of quadratic Lie algebras (see \cite{m-r85}).

Let $({\frg}_1,[\,,\,]_1,B_1)$ be a quadratic Lie algebra and let $({\frg}_2,[\,,\,]_2)$ be a Lie algebra
(not necessarily quadratic) such that there exists a homomorphism of Lie algebras
$\varphi: {\frg}_2 \rightarrow \Der({\frg}_1,B_1)$, where
$\Der({\frg}_1,B_1)$ denotes the Lie algebra of the derivations of $\frg$ wich are skew-symmetric relative to $B_1$ (i.e., $\Der(\frg,B_1)=\Der(\frg)\cap\frso(\frg,B_1)$).
Since we have $\varphi({\frg}_2)\subseteq \Der({\frg}_1,B_1)$,
the bilinear map $\psi: {\frg}_1\times {\frg}_1 \rightarrow ({\frg}_2)^*$
given by $\psi(x_1,y_1): z_2\mapsto B_1(\varphi(z_2)(x_1),y_1)$
for any $x_1,y_1\in \frg_1$ and $z_2\in \frg_2$, is a $2$-cocycle, where $({\frg}_2)^*$ is considered as a trivial ${\frg}_1$-module. Consequently,  the vector space ${\frg}_1\oplus ({\frg}_2)^*$, endowed with the multiplication:
\[
[x_1+f,y_1+h]_c:=  [x_1,y_1]_1+\psi(x_1,y_1),
\]
for any $x_1,y_1 \in {\frg}_1$ and  $f,h\in ({\frg}_2)^*$,
is a Lie algebra. This Lie algebra is the central extension  of ${\frg}_1$ by means of $\psi$.

Let $\pi$ be the co-adjoint representation of  ${\frg}_2$ For $x_2 \in {\frg}_2$, an easy computation proves that
the endomorphism  ${\bar  \varphi}(x_2)$ of ${\frg}_1\oplus ({\frg}_2)^*$  defined by: ${\bar  \varphi}(x_2)(x_1+f):=
\varphi(x_2)(x_1)+\pi(x_2)(f)$, for any $x_1 \in {\frg}_1, f\in ({\frg}_2)^*$, is a derivation of the Lie algebra
$({\frg}_1\oplus ({\frg}_2)^*, [\,,\,]_c)$. Next,
it is easy to see that the linear map ${\bar  \varphi}: {\frg}_2 \rightarrow \Der({\frg}_1\oplus ({\frg}_2)^*)$ is
a homomorphism of Lie algebras. Therefore, one can consider
${\frg}:= {\frg}_2 \ltimes_{\bar  \varphi}({\frg}_1\oplus ({\frg}_2)^*)$,
the semi-direct product of  ${\frg}_1\oplus ({\frg}_2)^*$ by ${\frg}_2$ by means of ${\bar  \varphi}$.
As a vector space ${\frg}= {\frg}_2\oplus  {\frg}_1\oplus ({\frg}_2)^*$, and the bracket
of the Lie algebra ${\frg}$ is given by:
\begin{multline*}
[x_2+x_1+f,y_2+y_1+h]\\ =[x_2,y_2]_2 + \Big([x_1,y_1]_1 + \varphi(x_2)(y_1) - \varphi(y_2)(x_1)\Big)\\
 + \Big(\pi(x_2)(h) - \pi(y_2)(f) + \psi(x_1,y_1)\Big),
\end{multline*}
for any $(x_2,x_1,f), (y_2,y_1,h)  \in {\frg}_2 \oplus {\frg}_1 \oplus ({\frg}_2)^*$. Moreover, if $\gamma: {\frg}_2\times {\frg}_2 \rightarrow {\bF}$ is any invariant, symmetric bilinear form on ${\frg}_2$ (we may take $\gamma=0$), it is easy to see that the bilinear form $B_{\gamma}: {\frg}\times {\frg} \rightarrow {\bF}$ defined by:
\begin{multline}\label{eq:Bgamma}
B_{\gamma}(x_2+x_1+f,y_2+y_1+h):=\\
 \gamma(x_2,y_2) + B(x_1,y_1) + f(y_2) + h(x_2),
\end{multline}
for any $(x_2,x_1,f), (y_2,y_1,h)  \in {\frg}_2 \oplus {\frg}_1 \oplus ({\frg}_2)^*,$  is an invariant scalar product on ${\frg}$. In this situation, ${\frg}$
(or $({\frg},B_0))$ is called the \emph{double extension} of $({\frg}_1,[\,,\,]_1,B_1)$ by ${\frg}_2$ by means of $\varphi$. We will denote this Lie algebra by $T_\varphi(\frg_1,B_1,\frg_2)$.

If ${\frg}_1= \{0\}$, the double extension ${\frg}_2 \oplus ({\frg}_2)^*$ is called also the \emph{trivial $T^\star$-extension} of the Lie algebra ${\frg}_2$ and it
is denoted by $T_0^\star({\frg}_2)$.

\begin{remark}\label{re:double_extension}
Let $(\frg_1,[\,,\,],B_1)$ be a quadratic Lie algebra, let $(\frg_2,[\,,\,]_2)$ be a Lie algebra and let $\varphi:\frg_2\rightarrow \Der(\frg_1,B_1)$ be a Lie algebra homomorphism. Then for any $0\ne\alpha\in\bF$, the linear map
\[
\begin{split}
T_\varphi(\frg_1,B_1,\frg_2)&\longrightarrow T_\varphi(\frg_1,\alpha B_1,\frg_2)\\
x_2+x_1+f\ &\mapsto\quad x_2+x_1+\alpha f,
\end{split}
\]
for any $x_2\in\frg_2$, $x_1\in\frg_1$ and $f\in\frg_2^*$, is easily checked to be a Lie algebra isomorphism. Hence we may always scale the bilinear form $B_1$ and get isomorphic double extensions.

Moreover, if $\frg_1=\frg_1^1\oplus\frg_1^2$ (direct sum of ideals) with $\frg_1^2$ abelian, $B_1$ is the orthogonal sum $B_1=B_1^1\perp B_1^2$ of the nondegenerate bilinear forms $B_1^i$ in $\frg_1^i$, $i=1,2$, and $\varphi$ leaves both $\frg_1^1$ and $\frg_1^2$ invariant, then the linear map
\[
\begin{split}
T_\varphi(\frg_1,B_1,\frg_2)&\longrightarrow T_\varphi(\frg_1,B_1^1\perp \alpha^2 B_1^2,\frg_2)\\
x_2+(u_1+u_2)+f\ &\mapsto\quad x_2+(u_1+\alpha^{-1}u_2)+f
\end{split}
\]
is again a Lie algebra isomorphism. Hence over algebraically closed fields we may scale `partially' the bilinear form on $\frg_1$ and get isomorphic double extensions, and over the real field we may scale by a positive scalar to get isomorphic double extensions. \qed
\end{remark}

\begin{proposition}\label{L1}
Let $({\frg}= {\frs}\oplus {\frr},B)$ be a non solvable and non
simple irreducible quadratic Lie algebra, where ${\frs}$ is a
Levi subalgebra of $\frg$ and ${\frr}$ is the solvable radical of $\frg$.
Then:
\begin{enumerate}
\item ${\frr}^\perp \subseteq {\frz}({\frr})$,
\item the ${\frs}$-module ${\frr}^{\perp}$
is isomorphic to the adjoint module of $\frs$.
\end{enumerate}
\end{proposition}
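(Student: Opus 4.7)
My plan is to prove (1) by first showing $\frr^\perp\subseteq\frr$, from which $\frr^\perp\subseteq\frz(\frr)$ follows, and then to deduce (2) from the $\frs$-invariant nondegenerate pairing $\frs\times\frr^\perp\to\bF$ induced by $B$. As preparation: invariance of $B$ makes $\frr^\perp$ an ideal of $\frg$ with $\dim\frr^\perp=\dim\frs$, and $B([\frr,\frr^\perp],\frg)\subseteq B(\frr,\frr^\perp)=0$ together with nondegeneracy gives $[\frr,\frr^\perp]=0$, so $\frr\cap\frr^\perp\subseteq\frz(\frr)$. Also $\frr\neq 0$: otherwise $\frg=\frs$ would be semisimple, and since any semisimple $\frs$ is an orthogonal sum of its simple ideals (each $B$-nondegenerate), irreducibility would force simplicity, against the hypothesis.

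The heart of (1) is the inclusion $\frr^\perp\subseteq\frr$. Let $\pi\colon\frg\to\frs$ denote the projection along $\frr$ and set $\frs_1:=\pi(\frr^\perp)$, an ideal of the semisimple algebra $\frs$. Suppose for contradiction $\frs_1\neq 0$. The solvable radical of the Lie algebra $\frr^\perp$ is $\frr\cap\frr^\perp$ (abelian, being contained in $\frz(\frr)$), so a Levi of $\frr^\perp$ is semisimple of dimension $\dim\frs_1$. By Malcev's theorems this Levi lies in some Levi of $\frg$, which is conjugate to $\frs$ by an inner automorphism $\sigma=\exp(\ad n)$ with $n$ in the nilradical of $\frg$; invariance of $B$ makes $\ad n$ skew-adjoint, so $\sigma$ is a $B$-isometry, and since $\sigma$ preserves the characteristic ideal $\frr$ it also preserves $\frr^\perp$. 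Applying $\sigma^{-1}$ thus produces a semisimple subalgebra $\tilde\frs\subseteq\frs\cap\frr^\perp$; since $\pi$ restricts to the identity on $\frs$ and $\pi(\frr^\perp)=\frs_1$, we have $\tilde\frs\subseteq\frs_1$, and by dimension $\tilde\frs=\frs_1$. In particular $\frs_1\subseteq\frr^\perp$. Writing $\frs=\frs_1\oplus\frs_1'$ as a direct sum of ideals, invariance together with $\frs_1=[\frs_1,\frs_1]$ and $[\frs_1,\frs_1']=0$ yields $B(\frs_1,\frs_1')=0$; combined with $B(\frs_1,\frr)=0$, a dimension count gives $\frs_1^\perp=\frs_1'+\frr$ and $\frs_1\cap\frs_1^\perp=0$, so $B|_{\frs_1}$ is nondegenerate. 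Thus $\frs_1$ is a proper $B$-nondegenerate ideal of $\frg$ (proper because $\frr\neq 0$), contradicting irreducibility; hence $\frs_1=0$.

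With (1) established, $B$ restricts to a nondegenerate pairing $\frs\times\frr^\perp\to\bF$: for $x\in\frr^\perp\subseteq\frr$, $B(\frs,x)=0$ together with the automatic $B(\frr,x)=0$ forces $x=0$. Invariance $B([s,s'],x)=B(s,[s',x])$ then makes the linear map $\Phi\colon\frr^\perp\to\frs^*$, $\Phi(x)(s):=B(s,x)$, an isomorphism of $\frs$-modules onto the coadjoint module; since $\frs$ is semisimple, its Killing form identifies $\frs^*$ with the adjoint $\frs$-module, establishing (2). The main difficulty lies in the Levi--Malcev maneuver used for (1): one must simultaneously realize a Levi of $\frr^\perp$ inside $\frs$ while keeping it contained in $\frr^\perp$, and this relies crucially on the observation that inner automorphisms of the form $\exp(\ad n)$ are automatically $B$-isometries by virtue of the invariance of $B$.
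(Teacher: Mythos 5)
Your proof is correct, and your part (2) is essentially identical to the paper's: the injective $\frs$-module map $x\mapsto B(\cdot,x)\vert_{\frs}$ into $\frs^*$, bijective by the dimension count $\dim\frr^\perp=\dim\frs$, followed by the Killing-form identification $\frs^*\simeq\frs$. Where you genuinely diverge is the inclusion $\frr^\perp\subseteq\frr$ in part (1). The paper sets $\fri=\{x\in\frs : B(x,\frr)=0\}$, notes $[\fri,\frr]=0$, so that $\fri$ is a semisimple (hence centerless) ideal of $\frg$, nondegenerate by Proposition \ref{LEM}(2), therefore zero by irreducibility and non-simplicity, and then simply asserts that $\frr^\perp\subseteq\frr$ follows. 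You instead show directly that the projection $\frs_1=\pi(\frr^\perp)$ vanishes, by conjugating a Levi subalgebra of the ideal $\frr^\perp$ into $\frs$ via a Malcev automorphism $\exp(\ad n)$ and exploiting the (correct and crucial) observation that invariance of $B$ makes $\ad n$ skew-adjoint, so that the automorphism is an isometry stabilizing $\frr^\perp$; all the individual steps (the radical of the ideal $\frr^\perp$ being $\frr\cap\frr^\perp$, the dimension count $\dim\tilde\frs=\dim\frs_1$, and the nondegeneracy of $B\vert_{\frs_1}$) check out. This is heavier machinery than the paper uses, but it buys a genuine dividend: the paper's step from $\fri=0$ to $\frr^\perp\subseteq\frr$ is not a formal implication, since an $\frs$-submodule meeting $\frs$ trivially may still project onto a nonzero ideal of $\frs$ (a ``diagonal'' copy inside $\frs\oplus\frr$), and your conjugation argument is precisely what rules this out. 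For the record, the same gap can be closed with no Levi--Malcev input at all: since $[\frr,\frr^\perp]=0$, the solvable radical $\frr\cap\frr^\perp$ of $\frr^\perp$ is central in $\frr^\perp$, so $[\frr^\perp,\frr^\perp]$ coincides with a Levi subalgebra of $\frr^\perp$ and is a semisimple, hence centerless, ideal of $\frg$; it is nondegenerate by Proposition \ref{LEM}(2) and proper because $\frr\neq 0$, so irreducibility forces $[\frr^\perp,\frr^\perp]=0$, making $\frr^\perp$ an abelian, hence solvable, ideal, which therefore lies in $\frr$.
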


\begin{proof}
The subspace ${\fri}:= \{x \in {\frs} : B(x,{\frr})= 0\}$ is an ideal of
${\frs}$, because $B$ is invariant. Since $B([{\fri},{\frr}],{\frg})=
B({\fri},[{\frr},{\frg}])= 0$, then $[{\fri},{\frr}]= 0$ and hence ${\fri}$ is a semisimple ideal of $\frg$ and it is nondegenerate by Proposition \ref{L1}. The fact that
$\frg$ is irreducible and non simple implies ${\fri}= \{0\}$. It follows that
${\frr}^\perp$ is contained in ${\frr}$. Since $B([{\frr}^\perp,{\frr}], {\frg})= B({\frr}^\perp,[{\frr}, {\frg}])= 0$,
then we get $[{\frr}^\perp,{\frr}]= 0$, so ${\frr}^\perp \subseteq {\frz}({\frr})$.

Let us consider  the map $\varphi: {\frr}^\perp \rightarrow {\frs}^*$, defined by:
$\varphi(x):= B(x,.)$, for any $x\in {\frr}^\perp$. The non-degeneracy (resp. invariance)
of $B$ implies that $\varphi$ is injective (resp. a homomorphism of ${\frs}$-modules). Since the dimension of $\frr^\perp$ equals the dimension of $\frs$,  it follows that $\varphi$ is an isomorphism of ${\frs}$-modules. Also, $\frs$ is semisimple, so its Killing form is nondegenerate and hence the $\frs$-modules $\frs$ and $\frs^*$ are isomorphic.
\end{proof}

\begin{corollary}\label{co:trivialTextension}
Let $({\frg}= {\frs}\oplus {\frr},B)$ be a non solvable and non
simple irreducible quadratic Lie algebra, where ${\frs}$ is a
Levi subalgebra of $\frg$ and ${\frr}$ is the solvable radical of $\frg$. Assume that $\frr=\frr^\perp$. Then $\frs$ is simple and $\frg$ is isomorphic to the trivial $T^\star$ extension $T_0^\star(\frs)$.
\end{corollary}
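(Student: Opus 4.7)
The plan is to extract structural information from the hypothesis step by step, relying heavily on Proposition \ref{L1}. From Proposition \ref{L1}(1), $\frr^\perp\subseteq\frz(\frr)$, and combining this with the hypothesis $\frr=\frr^\perp$ gives $\frr\subseteq\frz(\frr)$, so $\frr$ is abelian. Also, $\frr=\frr^\perp$ forces $\dim\frr=\tfrac{1}{2}\dim\frg$, so $\dim\frr=\dim\frs$, and Proposition \ref{L1}(2) identifies the $\frs$-module $\frr$ (which equals $\frr^\perp$) with the adjoint module of $\frs$. Thus $\frg=\frs\ltimes\frr$, a semidirect product with an abelian ideal $\frr$ on which $\frs$ acts as on its adjoint representation.

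To show $\frs$ is simple, I would decompose $\frs=\frs_1\oplus\cdots\oplus\frs_k$ into simple ideals; the $\frs$-module isomorphism $\frr\cong\frs$ then yields a corresponding decomposition $\frr=\frr_1\oplus\cdots\oplus\frr_k$ in which $[\frs_j,\frr_i]=0$ for $j\ne i$. Each subspace $\frj_i:=\frs_i\oplus\frr_i$ is stable under bracketing with $\frs$ (by the module decomposition) and with $\frr$ (since $\frr$ is abelian and $[\frr_j,\frs_i]=0$ for $j\ne i$), hence is an ideal of $\frg$. Using invariance of $B$ together with $[\frs_i,\frs_i]=\frs_i$, the standard computation $B(\frs_j,\cdot)=B([\frs_j,\frs_j],\cdot)=B(\frs_j,[\frs_j,\cdot])$ shows $B(\frs_j,\frs_i)=0$ and $B(\frs_j,\frr_i)=0$ whenever $j\ne i$; together with $B(\frr,\frr)=0$, the ideals $\frj_i$ are mutually $B$-orthogonal, so nondegeneracy of $B$ on $\frg$ forces nondegeneracy on each $\frj_i$. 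Irreducibility of $\frg$ then leaves $k=1$, so $\frs$ is simple.

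Finally, to realize $\frg$ as $T_0^\star(\frs)$, I would use the $\frs$-module isomorphism $\varphi:\frr\to\frs^*$, $\varphi(r)(x):=B(x,r)$, from the proof of Proposition \ref{L1}(2). A short computation with the invariance of $B$ gives
\[
\varphi([x,r])(y)=B(y,[x,r])=B([y,x],r)=\varphi(r)([y,x]),
\]
which says that under $\varphi$ the action $[x,\,\cdot\,]$ on $\frr$ becomes the coadjoint action of $\frs$ on $\frs^*$. Hence $\frg\cong\frs\ltimes\frs^*$ as Lie algebras, which is exactly the bracket structure produced by the double extension construction when $\frg_1=\{0\}$ and $\frg_2=\frs$. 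Under the identification $r\leftrightarrow\varphi(r)=:f$, the form $B$ reads
\[
B(x+r,y+s)=\gamma(x,y)+f(y)+h(x),\qquad \gamma:=B|_{\frs\times\frs},
\]
matching formula \eqref{eq:Bgamma}; so $\frg\cong T_0^\star(\frs)$ as quadratic Lie algebras. The only mild obstacle is the bookkeeping needed to check that $\varphi$ intertwines the adjoint action on $\frr$ with the coadjoint action on $\frs^*$, and that the bilinear form arising on the semidirect product is of the form $B_\gamma$; both follow directly from the invariance of $B$ together with the isotropy $B(\frr,\frr)=0$.
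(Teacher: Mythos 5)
Your proof is correct and follows essentially the same route as the paper's: Proposition \ref{L1} gives that $\frr=\frr^\perp$ is abelian and isomorphic to the adjoint module $\frs\simeq\frs^*$, whence $\frg\simeq T_0^\star(\frs)$, with irreducibility of $\frg$ forcing $\frs$ to be simple. You merely make explicit two steps the paper leaves implicit, namely the orthogonal decomposition of $\frg$ into nondegenerate ideals $\frs_i\oplus\frr_i$ that rules out more than one simple summand of $\frs$, and the verification that $\varphi(r)=B(\cdot,r)$ intertwines the bracket on $\frr$ with the coadjoint action and carries $B$ to a form of type $B_\gamma$ as in \eqref{eq:Bgamma}.
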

\begin{proof}
By Proposition \ref{L1}, $\frr$ is abelian and isomorphic to the adjoint module $\frs\simeq \frs^*$ for $\frs$. Hence $\frg$ is isomorphic to $T_0^\star(\frs)$. The irreducibility of $\frg$ forces $\frs$ to be simple.
\end{proof}

\begin{corollary}\label{L2}
Let $({\frg}= {\frs}\oplus {\frr},B)$ be a non solvable and non simple irreducible  quadratic Lie algebra, where ${\frs}$ is a
Levi subalgebra and ${\frr}$ is the solvable radical of $\frg$. Let $m$ be
the number of the simple ideals of ${\frs}$ and $n$ the number of irreducible submodules in the decomposition of
the ${\frs}$-module ${\frr}$ as a direct sum of simple ${\frs}$-submodules.
Then $m\leq n$.
\end{corollary}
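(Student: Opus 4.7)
The plan is to exploit Proposition \ref{L1}(ii), which already identifies $\frr^\perp$ with the adjoint $\frs$-module, and then compare its length as an $\frs$-module with that of $\frr$, using that $\frr^\perp$ sits inside $\frr$.

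First I would note that the adjoint $\frs$-module decomposes, under the action of the semisimple Lie algebra $\frs$, as the direct sum of its $m$ simple ideals, each of which is an irreducible $\frs$-submodule. By Proposition \ref{L1}(ii), the same is true of $\frr^\perp$: it is a semisimple $\frs$-module with exactly $m$ simple summands.

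Next I would observe that $\frr^\perp\subseteq\frr$ (again by Proposition \ref{L1}(i), since $\frr^\perp\subseteq\frz(\frr)\subseteq\frr$), and that $\frr^\perp$ is an $\frs$-submodule of $\frr$. Since $\frs$ is semisimple and the characteristic is $0$, Weyl's theorem on complete reducibility guarantees that $\frr$ is a semisimple $\frs$-module, and that the submodule $\frr^\perp$ admits an $\frs$-invariant complement in $\frr$. Refining a decomposition of this complement into irreducibles and combining with a decomposition of $\frr^\perp$ into its $m$ irreducible summands, we obtain a decomposition of $\frr$ into simple $\frs$-submodules in which at least $m$ of them come from $\frr^\perp$. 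By the Jordan--H\"older property, the total number of irreducible summands is the invariant $n$, so $m\leq n$.

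There is no real obstacle here: the only subtlety is making sure complete reducibility is applied correctly, i.e.\ that length is monotone on $\frs$-submodules, which is immediate from Weyl's theorem in characteristic $0$. The entire argument is a two-line consequence of Proposition \ref{L1}(ii) once this is noted.
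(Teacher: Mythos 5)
Your proof is correct and takes essentially the same route as the paper: Proposition \ref{L1} identifies $\frr^\perp$ with the adjoint module of $\frs$, which has exactly $m$ irreducible summands (one per simple ideal), and since $\frr^\perp$ is an $\frs$-submodule of $\frr$, complete reducibility gives $m\leq n$. The additional detail you supply (Weyl's theorem to produce an invariant complement and the Jordan--H\"older property to justify that $n$ is well defined) simply makes explicit what the paper's two-line proof leaves implicit.
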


\begin{proof} By Proposition \ref{L1}, the
${\frs}$-modules ${\frs}$ and ${\frr}^{\perp}$ are isomorphic. Since ${\frr}^{\perp}$ is a submodule of the
${\frs}$-module ${\frr},$ then $m\leq n$.
\end{proof}

\begin{proposition}\label{pr:R_Rperp}
Let $({\frg}= {\frs}\oplus {\frr},B)$ be a non solvable and non simple irreducible quadratic Lie algebra, where ${\frs}$ is a
Levi subalgebra of $\frg$ and ${\frr}$ is the solvable  radical of $\frg$.

If ${\frs}$ is a simple Lie algebra, then  either
\[
{\frr}= {\frr}^{\perp}\quad \text{or}\quad {\frr}^{\perp}\subseteq [{\frr},{\frr}].
\]
\end{proposition}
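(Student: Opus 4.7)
The plan is to proceed by contradiction: suppose $\frr\neq \frr^\perp$ and $\frr^\perp\not\subseteq[\frr,\frr]$, and derive that $\frg$ contains a nontrivial nondegenerate ideal, contradicting irreducibility.

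First I would exploit the simplicity of $\frs$. By Proposition \ref{L1}, $\frr^\perp$ is isomorphic, as an $\frs$-module, to the adjoint module of $\frs$; since $\frs$ is simple, $\frr^\perp$ is a \emph{simple} $\frs$-submodule of $\frr$. Now $[\frr,\frr]$ is an $\frs$-submodule of $\frr$ (because $\frs$ acts by derivations, so $[\frs,[\frr,\frr]]\subseteq[\frr,\frr]$), hence $\frr^\perp\cap[\frr,\frr]$ is an $\frs$-submodule of the simple module $\frr^\perp$. Under the assumption $\frr^\perp\not\subseteq[\frr,\frr]$, this intersection is a proper submodule, so it must be zero.

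Next, since $\frs$ is semisimple, $\frr$ is a completely reducible $\frs$-module. Writing $\frr^\perp+[\frr,\frr]=\frr^\perp\oplus[\frr,\frr]$ and choosing an $\frs$-invariant complement $U'$ of this sum in $\frr$, I set $U:=[\frr,\frr]\oplus U'$, which is an $\frs$-submodule satisfying $\frr=\frr^\perp\oplus U$ and $[\frr,\frr]\subseteq U$. The key point is that $U$ is an ideal of $\frg$: since $\frr^\perp\subseteq\frz(\frr)$ by Proposition \ref{L1}, we have $[U,\frr]=[U,\frr^\perp]+[U,U]\subseteq[U,U]\subseteq[\frr,\frr]\subseteq U$; combined with $[\frs,U]\subseteq U$ (as $U$ is $\frs$-invariant) and $[U,U]\subseteq U$, this gives $[\frg,U]\subseteq U$.

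Finally I would verify that $B$ restricts to a nondegenerate form on $U$. If $u\in U$ satisfies $B(u,U)=0$, then $B(u,\frr)=B(u,\frr^\perp)+B(u,U)=0$, since $\frr^\perp\perp\frr$. Thus $u\in\frr\cap\frr^\perp=\frr^\perp$, and because $\frr^\perp\cap U=0$, we conclude $u=0$. So $U$ is a nondegenerate ideal of $\frg$, and irreducibility forces $U=0$ or $U=\frg$; the latter is excluded because $U\subseteq\frr$ and $\frg$ is nonsolvable. Hence $U=0$, which means $\frr=\frr^\perp$, contradicting our standing hypothesis. The argument is essentially purely structural; I expect no real obstacle, since once the simplicity of $\frr^\perp$ as an $\frs$-module (which uses crucially that $\frs$ is simple here) is combined with Weyl complete reducibility, the rest is a bookkeeping check that $U$ is a nondegenerate ideal.
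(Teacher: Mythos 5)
Your proof is correct and follows essentially the same route as the paper's: the simplicity of $\frr^\perp$ as an $\frs$-module forces the dichotomy $\frr^\perp\cap[\frr,\frr]=0$ or $\frr^\perp$, and in the first case a complete-reducibility complement containing $[\frr,\frr]$ is a nondegenerate ideal, killed by irreducibility and nonsolvability. You merely recast the paper's case analysis as a contradiction argument and spell out the details (ideal property via $\frr^\perp\subseteq\frz(\frr)$, nondegeneracy of $B$ on $U$) that the paper leaves implicit.
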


\begin{proof}
Since the ${\frs}$-modules ${\frs} $ and ${\frr}^\perp$ are isomorphic, the ${\frs}$-module
${\frr}^{\perp}$ is irreducible. Therefore, either ${\frr}^{\perp}\cap [{\frr},{\frr}]= 0$  or
${\frr}^{\perp}\cap [{\frr},{\frr}]={\frr}^{\perp}$.

We know, by Proposition \ref{L1}, that ${\frr}^{\perp}\subseteq {\frr}$. If
we assume that ${\frr}^{\perp}\cap [{\frr},{\frr}]= 0$,  then there exists a $\frs$-submodule $\frm$  of ${\frr}$ such that $[{\frr},{\frr}]\subseteq \frm$  and
${\frr}= {\frr}^{\perp}\oplus \frm$. Then $\frm$ is a nondegenerate ideal of $\frg$.
Since $\frg$ is irreducible and not solvable, we conclude that $\frm= 0$, so ${\frr}= {\frr}^{\perp}$.

Finally, if ${\frr}^{\perp}\cap [{\frr},{\frr}]={\frr}^{\perp},$ then ${\frr}^{\perp}\subseteq
[{\frr},{\frr}]$.
\end{proof}


\section{Classification of the non solvable irreducible   quadratic Lie algebras of dimension $\leq 13$  over algebraically closed fields}\label{se:closed}

Throughout this section, the ground field $\bF$ will be assumed to be algebraically closed.

Due to Proposition \ref{L2}, in the sequel we will assume that the solvable radical of our non solvable and non simple irreducible quadratic Lie algebras satisfies ${\frr}\not= {\frr}^{\perp}$.

Recall that, up to isomorphism, there is a unique irreducible module $V(m)$ for the simple Lie algebra $\frsl_2(\bF)$ of dimension $m+1$ ($m\geq 0$). Moreover, the well-known Clebsch-Gordan formula  gives
\[
V(n)\otimes V(m)\cong V(n+m)\oplus V(n+m-2)\oplus\cdots\oplus V(\lvert n-m\rvert).
\]
For $n=m$, the summands on the right hand side belong alternatively to the second symmetric power and to the second exterior power of $V(n)$.
Besides, $V(n)$ is endowed with a nondegenerate symmetric bilinear form invariant under the action of $\frsl_2(\bF)$ if and only if $n$ is even, while if $n$ is odd, then $V(n)$ is endowed with a nondegenerate skew-symmetric and invariant bilinear form. These forms are unique up to scalar multiples.

\begin{proposition}\label{pr:01246}
Let $({\frg}= {\frs}\oplus {\frr},B)$ be a non solvable and non simple irreducible quadratic Lie algebra, where ${\frs}$ is a
Levi subalgebra of $\frg$ and ${\frr}$ is the solvable radical of $\frg$. Assume $\frr\ne\frr^\perp$.
If $\dim({\frg})\leq 13$, then:

\begin{enumerate}

\item ${\frs}$ is isomorphic to the simple Lie algebra $\frsl_2(\bF)$,
\item  if $\fru$ is an irreducible submodule of the ${\frs}(\cong\frsl_2(\bF))$-module
${{\frr}/{{\frr}^\perp}}$, then $\fru$ is, up to isomorphism, ${V}(n)$ where $n\in
\{0,1,2,4,6\}$.

\end{enumerate}
\end{proposition}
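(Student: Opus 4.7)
The plan is to convert the inequality $\dim\frg\le 13$ into a bound on $\dim\frs$ through Proposition~\ref{L1}, rule out all but the $\frsl_2(\bF)$ case for~(1) by producing a nondegenerate ideal in the non-simple situation, and then use the induced invariant form on $\frr/\frr^\perp$ to constrain the simple summands in~(2).

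By Proposition~\ref{L1} the $\frs$-modules $\frr^\perp$ and $\frs$ are isomorphic, so $\dim\frr^\perp=\dim\frs$ and
\[
13\ge\dim\frg = 2\dim\frs + \dim(\frr/\frr^\perp).
\]
The hypothesis $\frr\ne\frr^\perp$ forces $\dim(\frr/\frr^\perp)\ge 1$, so $\dim\frs\le 6$. Over an algebraically closed field of characteristic zero the only simple Lie algebra of dimension at most $6$ is $\frsl_2(\bF)$ (the next simple one, $\frsl_3(\bF)$, has dimension $8$), so the only way $\frs$ can fail to be simple is $\frs=\frs_1\oplus\frs_2$ with $\frs_i\cong\frsl_2(\bF)$ and $\dim(\frr/\frr^\perp)=1$.

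I would rule out this non-simple case by exhibiting a proper nondegenerate ideal. Decompose $\frr^\perp=\fru_1\oplus\fru_2$ according to the $\frs$-module isomorphism $\frr^\perp\cong\frs=\frs_1\oplus\frs_2$, so $\fru_i\cong\frs_i$ as an $\frs$-module. Then $\frs_1\oplus\fru_1$ is an ideal of $\frg$: one has $[\frs_1,\frs_2]=0$; the subalgebra $\frs_2$ annihilates $\fru_1$ and the $1$-dimensional trivial complement of $\frr^\perp$ in $\frr$; and $[\fru_1,\frr]=0$ since $\fru_1\subseteq\frr^\perp\subseteq\frz(\frr)$ by Proposition~\ref{L1}. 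It is nondegenerate because the map $\varphi\colon\frr^\perp\to\frs^*$, $x\mapsto B(x,\cdot)$, from the proof of Proposition~\ref{L1} restricts to an isomorphism $\fru_1\to\frs_1^*$ (the two summands of $\frs^*$ being non-isomorphic simple $\frs$-modules), which makes $B\colon\frs_1\times\fru_1\to\bF$ a perfect pairing; combined with $B(\fru_1,\fru_1)=0$ this shows the Gram matrix of $\frs_1\oplus\fru_1$ is invertible. This contradicts the irreducibility of $\frg$ and establishes~(1).

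For~(2), the form $B$ descends to a nondegenerate symmetric $\frs$-invariant form on $\frr/\frr^\perp$ (whose radical in $\frr$ is exactly $\frr^\perp$), and $\dim(\frr/\frr^\perp)\le 7$. If $\fru\cong V(n)$ is any irreducible $\frs$-submodule, the dimension bound gives $n+1\le 7$. When $n$ is odd, $V(n)$ admits no nonzero symmetric invariant form (the fact recalled before the proposition), so $B\vert_\fru=0$; this forces $\fru\subseteq\fru^\perp$ inside the nondegenerate space $\frr/\frr^\perp$, hence $2\dim\fru\le 7$, i.e.\ $n+1\le 3$, leaving only $n=1$ among odd values. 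Together with the even options $n\in\{0,2,4,6\}$ this yields exactly $n\in\{0,1,2,4,6\}$. The main obstacle I anticipate is the non-simple case of~(1): one must simultaneously verify that $\frs_1\oplus\fru_1$ is closed under brackets with all of $\frg$ and that $B$ is nondegenerate on it, both steps hinging on the orthogonality of non-isomorphic isotypic components under invariant forms and on the explicit $\frs$-module structure of $\frr^\perp$ carried by $\varphi$.
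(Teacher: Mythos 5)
Your proof is correct and follows essentially the same route as the paper's: the bound $\dim\frs\le 6$ via Proposition~\ref{L1}, elimination of $\frs\cong\frsl_2(\bF)\oplus\frsl_2(\bF)$ by exhibiting the nondegenerate ideal $\frs_1\oplus\fru_1$ (which is exactly the paper's $\fri_1=\frs_1\oplus\frm_1$), and, for part~(2), the parity of invariant bilinear forms on $V(n)$ combined with $\dim(\frr/\frr^\perp)\le 7$. Your isotropic-subspace bound for odd $n$ and your perfect-pairing verification of nondegeneracy are just tidier renderings of the paper's ``coupled submodule'' argument and its ``it is clear'' step, and you harmlessly bypass the paper's preliminary elimination of the trivial summand $\frm_3$, which is not needed for the final contradiction.
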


\begin{proof}
Since  $13 \geq \dim{\frg}\geq \dim \frs +\dim\frr^\perp= 2\dim{\frs}$, we get  $\dim{\frs}\leq 6$.  Therefore,  ${\frs}$ is isomorphic either to
$\frsl_2(\bF)\oplus \frsl_2(\bF)$ or  to  $\frsl_2(\bF)$.

Suppose first that  ${\frs}$ is isomorphic to $\frsl_2(\bF)\oplus \frsl_2(\bF)$. Then ${\frs}={\frs}_1\oplus{\frs}_2$ where ${\frs}_1$ and ${\frs}_2$ are ideals of ${\frs}$, both
isomorphic to $\frsl_2(\bF)$. Since $\frr^\perp$ is a $\frs$-submodule of $\frr$ isomorphic to $\frs$ (Proposition \ref{L1}), there exist three ${\frs}$-submodules
${\frm}_1, {\frm}_2, {\frm}_3$ of ${\frr}$ such that:

\begin{enumerate}
\item $\frm_i$ is isomorphic, as a module for $\frs$, to $\frs_i$, $i=1,2$,
\item $\dim({\frm}_3)\leq 1,$
\item ${\frr}= {\frm}_1\oplus {\frm}_2\oplus {\frm}_3,\,$  and ${\frr}^\perp= {\frm}_1\oplus
{\frm}_2$. Moreover, $[{\frs}_1,{\frm}_1]= {\frm}_1$, $[{\frs}_2,{\frm}_2]= {\frm}_2$ and
$[{\frs}_1,{\frm}_2]= [{\frs}_2,{\frm}_1]=[\frs,\frm_3]= 0$.
\end{enumerate}
Since $\frr^\perp$ is contained in $\frz(\frr)$ (Proposition \ref{L1}) and $\frr=\frr^\perp\oplus\frm_3$, it follows that $\frr$ is abelian, and hence $\frm_3$ is an ideal of $\frg$. Moreover, $\frm_3$ is nondegenerate, because $B(\frs,\frm_3)=B([\frs,\frs],\frm_3)=B(\frs,[\frs,\frm_3])=0$. The irreducibility of $\frg$ forces $\frm_3=0$.

Now, $\fri_1:=\frs_1\oplus\frm_1$ and $\fri_2:=\frs_2\oplus\frm_2$
are two non-zero ideals of $\frg$.
Moreover, by using the invariance of $B,$ it is clear that these two ideals
are nondegenerate, and this contradicts the fact that $\frg$ is irreducible.

We conclude that ${\frs}$ cannot be isomorphic to $\frsl_2(\bF)\oplus \frsl_2(\bF)$, and hence ${\frs}$ is
isomorphic to $\frsl_2(\bF)$.

Let $\fru$ be  an irreducible submodule of the ${\frs}$-module
${{\frr}/ {{\frr}^\perp}}$. Then, identifying $\frs\cong\frsl_2(\bF)$, $\fru$ is isomorphic to ${V}(n)$, with
$1\leq n\leq 6$.  Now if $\fru$ is isomorphic to ${V}(3)$
or ${V}(5)$, then there is no invariant symmetric nondegenerate bilinear form on $\fru$, so there must exist another irreducible submodule $\fru'$ of the ${\frs}$-module
${{\frr}/ {{\frr}^\perp}}$, isomorphic to $\fru$ and coupled with it by means of $B$. But then $\dim \frr/\frr^\perp = 8$ or $12$, and $\dim\frg > 13$.
We conclude that $\fru$ is isomorphic to ${V}(n)$, with $n\in
\{0,1,2,4,6\}$.
\end{proof}

\begin{example}\label{ex:Dm}
Let $m \in {\bN}$ be even. Then  ${V}(m)$ admits  a unique (up to scalars)
nondegenerate, symmetric and $\frsl_2(\bF)$-invariant bilinear form $T$. Denote by
$\pi$ the representation of $\frsl_2(\bF)$ in ${V}(m)$ and by ${\frd}(m)$ the
double extension of the abelian quadratic Lie algebra $({V}(m),T)$ by $\frsl_2(\bF)$ by means of $\pi$.

The bracket in ${\frd}(m)$ is given by
\[
[x+u+f,y+v+h]= [x,y]_{\frsl_2(\bF)} + \psi(u,v) - h\circ ad _{\frsl_2(\bF)}(x) + f\circ
ad _{\frsl_2(\bF)}(y),
\]
for any $x,y\in \frsl_2(\bF)$, $u,v \in {V}(m)$, and  $f,h \in \frsl_2(\bF)^*$,
where $\psi(u,v)\in \frsl_2(\bF)^*$ is defined by $\psi(u,v)(x):= T(\pi(x)(u),v)$ for any $x\in \frsl_2(\bF)$.

An invariant scalar product $B$ on ${\frd}(m)$ is defined by
\[
B(x+u+f,y+v+h):= T(x,y) + h(y) + f(x),
\]
for any $x,y\in \frsl_2(\bF)$, $u,v \in {V}(m)$, and $f,h \in \frsl_2(\bF)^*$. \qed
\end{example}

\begin{proposition}\label{pr:46}
Let $({\frg}= \frs\oplus {\frr},B)$ be a non solvable and non simple irreducible quadratic Lie algebra of dimension $\leq 13$, where $\frs$ is isomorphic to $\frsl_2(\bF)$ and $\frr$ is the solvable radical of $\frg$.

If the $\frs(\cong\frsl_2(\bF))$-module ${{\frr}/ {{\frr}^\perp}}$ contains a submodule isomorphic to
${V}(m)$, where
$m \in \{4,6\}$, then $\frg$ is isomorphic to ${\frd}(m)$.
\end{proposition}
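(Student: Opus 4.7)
The plan is to identify $\frg$ with the double extension $\frd(m)$ of Example \ref{ex:Dm}.

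By complete reducibility, I would first write $\frr = \frr^\perp \oplus \fru \oplus \frc$ as $\frs$-modules, where $\fru$ is an $\frs$-submodule of $\frr$ isomorphic to $V(m)$ that projects to the given $V(m)$-submodule of $\frr/\frr^\perp$, and $\frc$ is an $\frs$-complement. The bound $\dim \frr/\frr^\perp \leq 13 - 2\dim\frs = 7$, combined with Proposition \ref{pr:01246} and the existence of the nondegenerate symmetric invariant form $\bar B$ induced by $B$ on $\frr/\frr^\perp$, forces $\frc = 0$ when $m = 6$ and $\frc \cong V(0)^k$ with $k \in \{0,1,2\}$ when $m = 4$. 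In either case, $\fru$ is the full $V(m)$-isotypic component of $\frr$.

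The central step is to show $[\fru,\fru] \subseteq \frr^\perp$. Clebsch--Gordan gives $\wedge^2 V(4) \cong V(6) \oplus V(2)$ and $\wedge^2 V(6) \cong V(10) \oplus V(6) \oplus V(2)$, while the $\frs$-isotypic components of $\frr$ are $V(2) = \frr^\perp$, $V(m) = \fru$, and possibly $V(0)^k$ (when $m=4$). Every summand of $\wedge^2 \fru$ thus has no target in $\frr$ except the $V(2)$-summand, which must land in $\frr^\perp$---with one exception: the $V(6)$-summand when $m = 6$ could a priori land in $\fru$. This is the main obstacle. It is overcome by observing that $\frr^\perp \subseteq \frz(\frr)$ (Proposition \ref{L1}) turns $\frr/\frr^\perp$ into a solvable Lie algebra on which $\frs$ acts by derivations; when $m=6$, the induced bracket on $\frr/\frr^\perp \cong V(6)$ is a scalar multiple of the unique (and surjective) $\frs$-equivariant map $\wedge^2 V(6) \twoheadrightarrow V(6)$, and nonzero-ness would make $V(6)$ a perfect solvable Lie algebra---hence zero, a contradiction.

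Given $[\fru,\fru] \subseteq \frr^\perp$, set $\fri := \frs \oplus \fru \oplus \frr^\perp$. It is closed under the bracket, and even an ideal: $\frc$ is $\frs$-trivial, commutes with the central $\frr^\perp$, and acts on $\fru$ by scalars by Schur's lemma (the $V(m)$-isotypic of $\frr$ is exactly $\fru$). A block analysis then shows $B|_\fri$ is nondegenerate: the pairing $B\colon \frs \times \frr^\perp \to \bF$ is nondegenerate by Proposition \ref{L1}, the restriction $B|_\fru$ is a nonzero scalar multiple of the unique invariant symmetric form on $V(m)$ (nonzero because $\bar B$ restricts nondegenerately to the $V(m)$-isotypic of $\frr/\frr^\perp$), and the cross-terms $B(\frs,\fru)$ and $B(\frr^\perp,\fru)$ both vanish (the first by Schur since $\frs \cong V(2) \not\cong V(m)$, the second because $\fru \subseteq \frr$). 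Irreducibility of $\frg$ then forces $\fri = \frg$; in particular $\frc = 0$ also when $m = 4$.

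Finally, I would identify $\frr^\perp$ with $\frs^*$ via $x \mapsto B(x,\cdot)|_\frs$ and $\fru$ with $V(m)$ equipped with $T := B|_\fru$. The invariance identity $B([u,v],x) = B(u,[v,x])$ for $u,v \in \fru$ and $x \in \frs$ translates to $[u,v](x) = T(\pi(x)u,v)$---precisely the $2$-cocycle $\psi$ of Example \ref{ex:Dm}---while the same identity turns the $\frs$-action on $\frr^\perp$ into the coadjoint action. Any rescaling of $T$ produces an isomorphic double extension by Remark \ref{re:double_extension}, yielding $\frg \cong \frd(m)$.
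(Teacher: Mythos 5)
Your proof is correct and follows essentially the same route as the paper: the same decomposition $\frr=\frr^\perp\oplus\fru\oplus\frc$, the same Clebsch--Gordan and invariant-form arguments forcing $\frc$ to be trivial and $[\fru,\fru]\subseteq\frr^\perp$, the same nondegenerate-ideal trick $\fri=\frs\oplus\fru\oplus\frr^\perp$ to kill $\frc$ when $m=4$, and the same identification with $\frd(m)$ up to rescaling handled by Remark \ref{re:double_extension}. Your explicit solvability argument ruling out a $V(6)$-component of $[\fru,\fru]$ when $m=6$ is the same underlying idea as the paper's terser deduction of $[\frr,\frr]=\frr^\perp$ from Proposition \ref{pr:R_Rperp} and the submodule structure of $\frr$, just spelled out more carefully.
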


\begin{proof}
If  the $\frs(\cong\frsl_2(\bF))$-module ${{\frr}/ {{\frr}^\perp}}$ contains a submodule isomorphic to ${V}(6)$,
then, by dimension count, there exists  an $\frs(\cong\frsl_2(\bF))$-submodule $\fru$ of ${\frr}$ isomorphic to ${V}(6)$
such that ${\frr}= {\frr}^\perp\oplus \fru$.
In this case, Proposition \ref{pr:R_Rperp} shows that $\frr^\perp=[{\frr},{\frr}]$.
It follows that ${{\frr}/ {{\frr}^\perp}}$ is an abelian Lie algebra and that $[\fru,\fru]= {\frr}^\perp$.
But ${V}(6)\wedge {V}(6)$ is isomorphic to ${V}(10)\oplus {V}(6)\oplus {V}(2)$, so we have
$\dim\Hom_{\frsl_2(\bF)}({V}(6)\wedge {V}(6),{V}(2))= 1$,  with a basis consisting of the projection $\psi$ onto $V(2)$  relative to the decomposition ${V}(6)\wedge {V}(6)\cong {V}(10)\oplus {V}(6)\oplus {V}(2)$).

Therefore, the Lie bracket in $\frg$ is completely determined by the action of $\frs$ on $\frg$ and by the unique (up to scalars) nonzero $\frs$-invariant bilinear map $\fru\times\fru\rightarrow \frr^\perp$. Hence, up to isomorphism, $\frg$ is uniquely determined and thus $\frg$ is isomorphic to $\frd(6)$.

If  the $\frs(\cong\frsl_2(\bF))$-module ${{\frr}/ {{\frr}^\perp}}$ contains a submodule isomorphic to ${V}(4)$ then, by dimension count, there are $\frs(\cong\frsl_2(\bF))$-modules $\fru$ and $\frv$ of $\frr$ such that $\frr=\frr^\perp\oplus \fru\oplus \frv$, with $\fru$ isomorphic to $V(4)$, and $\dim\frv\leq 2$. If $\dim \frv=2$, then by the invariance of $B$, the restriction of $B$ to $\frv$ is a symmetric nondegenerate bilinear form, and hence $\frv$ cannot be isomorphic to $V(1)$. It follows that $\frv$ is always a trivial $\frs$-module. Since $V(4)\wedge V(4)$ is isomorphic to $V(6)\oplus V(2)$, we get $[\fru,\fru]\subseteq \frr^\perp$. Also, by $\frs$-invariance $[\frv,\fru]\subseteq \fru$, so $\frs\oplus \fru\oplus\frr^\perp$ is a nondegenerate ideal of $\frg$, and hence it is the whole $\frg$ by irreducibility.

We conclude that $\frv= 0$,  and the same argument used in the case  $m= 6$ shows that $\frg$ is isomorphic to ${\frd}(4)$
\end{proof}

Therefore, because of Propositions \ref{pr:01246} and \ref{pr:46}, we are left with the situation in which our quadratic Lie algebra $\frg$ is, as a module for its Levi subalgebra $\frs\cong\frsl_2(\bF)$, a direct sum of copies of the irreducible modules $V(2)$, $V(1)$ and $V(0)$. Identify $\frs$ with the Lie algebra $\frsl(V)$ of endomorphisms of a two-dimensional vector space $V$. Then $V(2)$ is the adjoint module, and $V(1)$ is the natural module $V$. Gathering together the copies of irreducible modules of the same dimension, we may write, as in \cite{EO}:
\begin{equation}\label{eq:gslVVd}
\frg=\bigl(\frsl(V)\otimes \calJ\bigr)\oplus\bigl(V\otimes \calT\bigr)\oplus\frd,
\end{equation}
where $\frd$ is the sum of the trivial modules (this is the centralizer in $\frg$ of the subalgebra $\frsl(V)=\frsl(V)\otimes 1$, and hence it is a subalgebra), and $\calJ$ and $\calT$ are vector spaces, with $\calJ$ containing a distinguished element $1$, such that $\frsl(V)\otimes 1$ is the Levi subalgebra $\frs\cong\frsl(V)$ we have started with. (Here $\otimes$ means $\otimes_{\bF}$.)

The invariance of the form $B$ forces  the three summands $\frsl(V)\otimes \calJ$, $V\otimes \calT$ and $\frd$ to be orthogonal relative to $B$.
Besides, since the symmetric map $\frsl(V)\otimes \frsl(V)\rightarrow \bF$, $f\otimes g\mapsto \trace(fg)$, spans the space of $\frsl(V)$-invariant maps $\Hom_{\frsl(V)}(\frsl(V)\otimes\frsl(V),\bF)$ and the space $\Hom_{\frsl(V)}(V\otimes V,\bF)$ of $\frsl(V)$-invariant maps from $V\otimes V$ into the trivial module $\bF$ is one-dimensional and spanned by a fixed nonzero skew symmetric bilinear map $\bigl(u\vert v\bigr)$, we conclude that the restriction of $B$ to $\frsl(V)\otimes \calJ$ and to $V\otimes \calT$ present the form:
\begin{equation}\label{eq:B_BJ_BT}
\begin{split}
B(f\otimes a,g\otimes b)&=\trace(fg)B_\calJ(a,b),\\
B(u\otimes x,v\otimes y)&=(u\vert v)B_\calT(x,y),
\end{split}
\end{equation}
for any $f,g\in \frsl(V)$, $u,v\in V$, $a,b\in \calJ$ and $x,y\in \calT$, where $B_\calJ$ is a symmetric nondegenerate associative (i.e., $B_\calJ(a  b,c)=B_\calJ(a,b  c)$ for any $a,b,c\in \calJ$) symmetric bilinear form on $\calJ$, and $B_\calT$ is a nondegenerate skew-symmetric bilinear form on $\calT$. In particular $\dim \calT$ must be even.

Moreover, $\calJ=\bF 1\oplus \calJ_0$ and there is a distinguished element $c\in \calJ_0$ such that
\begin{equation}\label{eq:R_J}
\frr=\bigl(\frsl(V)\otimes \calJ_0\bigr)\oplus \bigl(V\otimes \calT\bigr)\otimes\frd,\quad \frr^\perp=\frsl(V)\otimes c.
\end{equation}

With the arguments in \cite{EO}, we see that the $\frsl(V)$-invariance of the Lie bracket in our Lie algebra $\frg$ in \eqref{eq:gslVVd} gives, for any $f,g\in\frsl(V)$, $u,v\in V$ and $d\in \frd$, the following conditions:
\begin{equation}\label{eq:g[]}
\begin{split}
[f\otimes a,g\otimes b]&=[f,g]\otimes a  b+2\trace(fg)D_{a,b},\\
[f\otimes a,u\otimes x]&=f(u)\otimes a\bullet x,\\
[u\otimes x,v\otimes y]&=\gamma_{u,v}\otimes \langle x\vert y\rangle +\bigl(u\vert v\bigr)d_{x,y},\\
[d,f\otimes a]&=f\otimes d(a),\\
[d,u\otimes x]&=u\otimes d(x),
\end{split}
\end{equation}
for suitable bilinear maps
\begin{equation}\label{eq:maps}
\begin{split}
\calJ\times \calJ\rightarrow \calJ,&\quad (a,b)\mapsto a  b,\quad\text{(symmetric),}\\
\calJ\times \calJ\rightarrow \frd,&\quad (a,b)\mapsto D_{a,b},\quad\text{(skew symmetric),}\\
\calJ\times \calT\rightarrow \calT,&\quad (a,x)\mapsto a\bullet x,\\
\calT\times \calT\rightarrow \calJ,&\quad (x,y)\mapsto \langle x\vert y\rangle\quad\text{(skew symmetric),}\\
\calT\times \calT\rightarrow \frd,&\quad (x,y)\mapsto d_{x,y}\quad\text{(symmetric),}\\
\frd\times \calJ\rightarrow \calJ,&\quad (d,a)\mapsto d(a),\\
\frd\times \calT\rightarrow \calT,&\quad (d,x)\mapsto d(x),
\end{split}
\end{equation}
such that $1  a=a$, $D_{1,a}=0$ and $1\bullet x=x$ for any $a\in \calJ$ and $x\in \calT$.

The Jacobi identity on $\frg$ forces that all these maps are invariant under the action of the Lie subalgebra $\frd$.

\begin{proposition}[{\cite[Theorem 2.2]{EO}}]\label{pr:conditions_g}
Under the hypotheses above, the following conditions hold:

\begin{enumerate}

\item $\calJ$ is a unital Jordan algebra with the product $a  b$, with unity $1$, and $D_{1,\calJ}=0$.

\item $\calT$ is a special unital Jordan module for $\calJ$, i.e., $(a  b)\bullet x=\frac{1}{2}\Bigl(a\bullet(b\bullet x)+b\bullet(a\bullet x)\Bigr)$ for any $a,b\in \calJ$ and $x\in \calT$.

\item $D_{a  b,c}+D_{b.c,a}+D_{c  a,b}=0$ and
$D_{a,b}(c)=a (b  c)-b (a  c )$ for any $a,b,c\in \calJ$.

\item $4D_{a,b}(x)=a\bullet (b\bullet x)-b\bullet(a\bullet x)$ for any $a,b\in \calJ$ and $x\in \calT$.

\item $4D_{a,\langle x\vert y\rangle}=-d_{a\bullet x,y}+d_{x,a\bullet y}$ for any $a\in \calJ$ and $x,y\in \calT$.

\item $2a \langle x\vert y\rangle=\langle a\bullet x\vert y\rangle +\langle x\vert a\bullet y\rangle$ for any $a\in \calJ$ and $x,y\in \calT$.

\item $d_{x,y}(a)=\langle a\bullet x\vert y\rangle -\langle x\vert a\bullet y\rangle$ for any $a\in \calJ$ and $x,y\in \calT$.

\item $d_{x,y}(z)-d_{z,y}(x)=\langle x\vert y\rangle\bullet z-\langle z\vert y\rangle \bullet x+2\langle x\vert z\rangle \bullet y$ for any $x,y,z\in \calT$.

\end{enumerate}
\end{proposition}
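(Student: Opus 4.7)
The plan is to derive each of the eight identities by applying the Jacobi identity of $\frg$ to carefully chosen homogeneous triples drawn from the three summands of the decomposition \eqref{eq:gslVVd}. Because these summands are isotypic components for the action of the distinguished Levi subalgebra $\frsl(V)=\frsl(V)\otimes 1$, they are linearly independent as $\frsl(V)$-modules, so each Jacobi equation decomposes into independent identities on the bilinear maps listed in \eqref{eq:maps}. The basic algebraic tool throughout is the dimension-two identity $fg+gf=\trace(fg)\id$ in $\End(V)$ for $f,g\in\frsl(V)$; this is what produces the trace-terms in the formulas \eqref{eq:g[]} and what allows one to disentangle the $\frsl(V)$-part from the scalar part of products in $\End(V)$.

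I would proceed by cases on the type of the triple. Triples in $\frsl(V)\otimes\calJ$ produce, after using Jacobi in $\frsl(V)$ and the trace identity, an $\frsl(V)\otimes\calJ$-component that enforces the Jordan identity, yielding (1) (commutativity being built into the stated symmetry of $a\cdot b$), and a $\frd$-component giving the cyclic identities (3). Triples of mixed type $(\frsl(V)\otimes\calJ,\frsl(V)\otimes\calJ,V\otimes\calT)$ yield the special unital Jordan module property (2) in one projection and (4) in the other, again after exploiting the dimension-two trace identity to separate tangential and radial contributions. Triples of mixed type $(\frsl(V)\otimes\calJ,V\otimes\calT,V\otimes\calT)$ produce (5), (6) and (7), each appearing as the projection of the Jacobi sum onto one of the three summands of \eqref{eq:gslVVd}. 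The normalizations $1\cdot a=a$, $D_{1,a}=0$, and $1\bullet x=x$ are forced by the requirement that $\frsl(V)\otimes 1$ itself be a Lie subalgebra isomorphic to $\frsl(V)$ with the given formulas.

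The main obstacle is the last case: the Jacobi identity on three elements $u\otimes x$, $v\otimes y$, $w\otimes z$ of $V\otimes\calT$. The cyclic sum produces cross-terms in which the skew form $(\cdot\vert\cdot)$ and the $\frsl(V)$-valued maps $\gamma_{u,v}$ interact, and one has to isolate the $V\otimes\calT$-component cleanly. This works precisely because $\dim V=2$, so that the Jacobi-like identity
\[
(u\vert v)w+(v\vert w)u+(w\vert u)v=0
\]
holds in $V$; combining it with the characterizing formula $\gamma_{u,v}(w)=(u\vert w)v+(v\vert w)u$ collapses the $V\otimes\calT$-projection of the Jacobi sum to precisely the identity (8). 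The $\frd$- and $\frsl(V)\otimes\calJ$-projections of this same cyclic sum are then automatic consequences of the identities already established, and the proof is complete.
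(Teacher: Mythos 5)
Your proposal is correct and follows essentially the same route as the paper, which gives no proof of its own but defers to \cite[Theorem 2.2]{EO}, where the argument is exactly this unpacking of the Jacobi identity on homogeneous triples, using the two-dimensionality of $V$ (via $fg+gf=\trace(fg)\,\id$ and the identity $(u\vert v)w+(v\vert w)u+(w\vert u)v=0$) to split each equation into the identities (1)--(8). One harmless bookkeeping slip: for three elements of $V\otimes\calT$ each double bracket already lies in $V\otimes\calT$, so the $\frd$- and $\frsl(V)\otimes\calJ$-projections of that cyclic sum are vacuous rather than ``automatic consequences,'' and likewise the pairs (2)/(4) and (6)/(7) are not separated by projections onto different summands of \eqref{eq:gslVVd} but within a single summand, by the linear independence of the commutator- and trace-parts (e.g.\ via $\gamma_{f(u),v}-\gamma_{u,f(v)}=-2(u\vert v)f$), exactly as your trace-identity remark anticipates.
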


\begin{remark}
Conversely, the algebra defined on the vector space in \eqref{eq:gslVVd} by means of \eqref{eq:g[]} is a Lie algebra, provided that the bilinear maps in \eqref{eq:maps} are invariant under the action of the subalgebra $\frd$ and the conditions in Proposition \ref{pr:conditions_g} hold.

These Lie algebras are closely related to the Lie algebras graded by the nonreduced root system $BC_1$. The particular case in which $\calT=0$, i.e., $\frg$ is a direct sum of copies of the adjoint and the trivial module for $\frs$, goes back to \cite{Tits62}. (See \cite{EO} and the references therein.) \qed
\end{remark}

\begin{lemma}\label{le:J_T}
Let $\frg$ be an irreducible quadratic Lie algebra of dimension $\leq 13$ with Levi subalgebra $\frs\cong\frsl_2(\bF)$ and such that, as a module for $\frs$, $\frg$ is the sum of copies of the adjoint module, the two-dimensional irreducible module and the trivial one-dimensional module. Write $\frg$ and its solvable radical $\frr$ as in \eqref{eq:gslVVd} and \eqref{eq:R_J}. Then the following conditions hold:
\begin{itemize}
\item $\calJ$ is a unital commutative and associative algebra and $\calJ_0$ is its nilpotent radical.
\item $D_{a,b}(\calJ)=0$ for any $a,b\in \calJ$.
\item $D_{1,\calJ}=0$, $c  \calJ_0=0$, $c\bullet \calT=0$ and $d(c)=0$ for any $d\in \frd$.
\item If $\calT=0=\frd$, then $c\in\calJ_0^2$.
\item $\dim\calT$ is even.
\end{itemize}
\end{lemma}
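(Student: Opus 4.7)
The plan is to derive the five properties from the bracket formulas \eqref{eq:g[]} combined with two structural facts: $\frr^\perp=\frsl(V)\otimes c$ is contained in $\frz(\frr)$ by Proposition~\ref{L1}, and $\frs=\frsl(V)\otimes 1$ is a Levi complement of the ideal $\frr$.

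I would first dispatch the direct conditions. The evenness of $\dim\calT$ is immediate from $B_\calT$ in \eqref{eq:B_BJ_BT} being a nondegenerate skew-symmetric form. The vanishing $D_{1,\calJ}=0$ follows from the isotypic decomposition: $\frsl(V)\otimes\calJ$ is the $V(2)$-isotypic component of $\frg$, so $[\frs,\frsl(V)\otimes\calJ]\subseteq\frsl(V)\otimes\calJ$, and comparing with $[f\otimes 1,g\otimes a]=[f,g]\otimes a+2\trace(fg)D_{1,a}$ forces the $\frd$-component $D_{1,a}$ to vanish. The centrality relations $c\calJ_0=0$, $c\bullet\calT=0$, and $d(c)=0$ come from $\frr^\perp\subseteq\frz(\frr)$: every bracket $[\frsl(V)\otimes c,\cdot]$ against an element of $\frr$ vanishes, and projecting summand by summand in \eqref{eq:g[]} yields each relation (brackets with $\frsl(V)\otimes\calJ_0$ give $c\calJ_0=0$ and incidentally $D_{c,\calJ_0}=0$, brackets with $V\otimes\calT$ give $c\bullet\calT=0$, and brackets with $\frd$ give $d(c)=0$). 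That $\calJ_0$ is an ideal of $\calJ$ follows because $\frr$ is an ideal of $\frg$ with $\frs\not\subseteq\frr$: the bracket $[\frsl(V)\otimes a,\frsl(V)\otimes b]$ for $a,b\in\calJ_0$ must lie in $\frr$, so projecting its $\frsl(V)\otimes\calJ$-component forces $ab\in\calJ_0$, and thus $\calJ$ is local with maximal ideal $\calJ_0$. Nilpotency of $\calJ_0$ then follows from solvability of $\frr$: iterating the bracket formula shows that the derived series of $\frr$ contains the Jordan powers of $\calJ_0$ tensored with $\frsl(V)$, so solvability forces these Jordan powers to vanish.

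The main technical obstacle is showing $\calJ$ is associative. The dimension bound $3\dim\calJ+2\dim\calT+\dim\frd\leq 13$ gives $\dim\calJ\leq 4$, so $\dim\calJ_0\leq 3$. A case analysis on $\dim\calJ_0\in\{0,1,2,3\}$, using the Jordan identity from Proposition~\ref{pr:conditions_g}(i), the nilpotency just established, and the nondegenerate associative form $B_\calJ$, shows that every such local commutative unital Jordan algebra $\calJ=\bF 1\oplus\calJ_0$ is forced to be associative (the cases $\dim\calJ_0\leq 1$ are trivial; $\dim\calJ_0=2$ reduces to either $\bF[x]/(x^3)$ or a trivial product on $\calJ_0$; and $\dim\calJ_0=3$ is handled by subcases on $\dim\calJ_0^2$, in each of which either $\calJ_0^3=0$ makes all triple products vanish, or a distinguished annihilator element forces the same). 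Once associativity is established, $\calJ_0$ is plainly the nilpotent radical of $\calJ$, and Proposition~\ref{pr:conditions_g}(iii) gives $D_{a,b}(\calJ)=0$ because $a(bc)-b(ac)=0$ by commutativity and associativity.

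Finally, suppose $\calT=0=\frd$, so $\frg=\frsl(V)\otimes\calJ$; the running assumption $\frr\neq\frr^\perp$ of this section forces $\dim\calJ_0\geq 2$. If $c\notin\calJ_0^2$, pick a complement $W$ of $\bF c$ in $\calJ_0$ with $\calJ_0^2\subseteq W$. Because $c\calJ_0=0$, $W$ is an ideal of $\calJ$, and $B_\calJ|_W$ is nondegenerate since $\bF c$ is by construction the radical of $B_\calJ|_{\calJ_0}$. Hence $\frsl(V)\otimes W$ is a proper nondegenerate ideal of $\frg$, contradicting irreducibility.
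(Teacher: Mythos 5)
Your proof is correct, and most of it follows the paper's own skeleton ($\frr$ an ideal gives $\calJ_0$ an ideal of $\calJ$; $\frr^\perp\subseteq\frz(\frr)$ gives the third bullet; nondegeneracy of the skew form $B_\calT$ gives the last bullet; associativity plus Proposition \ref{pr:conditions_g}(3) gives $D_{a,b}(\calJ)=0$), but two sub-arguments genuinely diverge. For nilpotency of $\calJ_0$, the paper rules out idempotents --- a nonzero $e=e^2\in\calJ_0$ would make $\frsl(V)\otimes e\cong\frsl_2(\bF)$ a subalgebra of the solvable radical, since $D_{e,e}=0$ --- and then quotes the Jordan-theoretic fact that a finite-dimensional nil Jordan algebra is nilpotent; you instead push the derived series of $\frr$ down onto the Jordan derived series of $\calJ_0$. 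Your one-line claim is true but deserves a word of care: $[f\otimes a,g\otimes b]$ carries the $\frd$-component $2\trace(fg)D_{a,b}$, so to get $\frsl(V)\otimes\calJ_0^{\langle k\rangle}\subseteq\frr^{(k)}$ one should pair trace-orthogonal elements, e.g.\ $[e\otimes a,h\otimes b]=-2e\otimes ab$ in a standard basis $\{e,h,f\}$ of $\frsl(V)$, and then use that $\frr^{(k+1)}$ is an ideal of $\frg$ together with $D_{1,\calJ}=0$ to recover $h\otimes ab$ and $f\otimes ab$; moreover this directly yields only solvability of the Jordan algebra $\calJ_0$, so you still need Albert's theorem (solvable implies nilpotent) --- a reliance on Jordan structure theory comparable to the paper's. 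Your associativity case analysis by $\dim\calJ_0$ is a more pedestrian version of the paper's dichotomy (either $\calJ_0^3=0$, or $\calJ_0^2$ has codimension one and $\calJ_0$ is generated by a single element with $a^4=0$, hence associative by power-associativity); both work. The nicest divergence is the fourth bullet: the paper just invokes Proposition \ref{pr:R_Rperp} ($\frr\ne\frr^\perp$ forces $\frr^\perp\subseteq[\frr,\frr]$, and $[\frr,\frr]=\frsl(V)\otimes\calJ_0^2$ when $\calT=0=\frd$), whereas you argue directly: from a hypothetical $c\notin\calJ_0^2$ you build a complement $W\supseteq\calJ_0^2$ of $\bF c$ in $\calJ_0$, observe $\frsl(V)\otimes W$ is an ideal, and check $B_\calJ\vert_W$ is nondegenerate because the radical of $B_\calJ\vert_{\calJ_0}$ is exactly $\bF c$ by \eqref{eq:R_J} (so $B_\calJ(W,c)=0$ and any $w\in W$ orthogonal to $W$ lies in $\bF c\cap W=0$), contradicting irreducibility. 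This is valid and in effect re-proves the relevant case of Proposition \ref{pr:R_Rperp} by the same complement trick used in its proof: your route buys self-containedness at this spot, the paper's buys brevity.
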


\begin{proof}
Since $\frr$ in \eqref{eq:R_J} is an ideal of $\frg$, it follows that $\calJ_0$ is an ideal of $\calJ$. Moreover, there is no idempotent $0\ne e=e^2$ in $\calJ_0$, as this would imply that $\frsl(V)\otimes e$ is a subalgebra of $\frg$ contained in the solvable radical $\frr$ and isomorphic to $\frsl_2(\bF)$. Therefore, $\calJ_0$ is a nilpotent ideal of the Jordan algebra $\calJ$. (For the basic facts on Jordan algebras the reader may consult \cite{JacobsonJordan} or \cite{Schafer}.)

Also, since $\dim \frg\leq 13$, we have $2\leq\dim \calJ\leq 4$ and hence $1\leq \dim \calJ_0\leq 3$, so that $\calJ_0^4=0$. Hence either $\calJ_0^3=0$, so in particular $\calJ_0$ is associative, or $\calJ_0^2$ has codimension $1$ in $\calJ_0$. In the latter case, $\calJ_0=\bF a+\calJ_0^2$, and this shows $\calJ_0^2=\bF a^2+ \calJ_0^3$, and $\calJ_0^3=\bF a^3$, so that $\calJ_0$ is generated by $a$ with $a^4=0$. In any case, $\calJ_0$ is a commutative and associative algebra, and hence $\calJ$ is a unital commutative and associative algebra. In particular, this gives $D_{a,b}(\calJ)=0$ for any $a,b\in \calJ$, by Proposition \ref{pr:conditions_g}\,(3).

Besides, since $\frr^\perp\subseteq \frz(\frr)$ (Proposition \ref{L1}), we have
$c  \calJ_0=0$, $D_{c,\calJ}=0$, $c\bullet \calT=0$ and $d(c)=0$ for any $d\in \frd$ and since we assume $\frr^\perp\subseteq [\frr,\frr]$ (because of Proposition \ref{pr:R_Rperp}), we have $c\in \calJ_0^2$ if $\calT=0=\frd$.

As there is no nonzero $\frsl_2(\bF)$-invariant symmetric bilinear form on $V(1)$, the number of copies of $V$ (i.e., $\dim\calT$) is even.
\end{proof}

\begin{proposition}\label{pr:dimJ_4}
Let $({\frg}= \frs\oplus {\frr},B)$ be a non solvable and non simple irreducible quadratic Lie algebra of dimension $\leq 13$, where $\frs$ is isomorphic to $\frsl_2(\bF)$ and $\frr$ is the solvable radical of $\frg$.

If, as a module for $\frs(\cong\frsl_2(\bF))$, $\frg$ contains four copies of the adjoint module, then $\frg$ is isomorphic to $\frsl_2(\bF)\otimes \calA$, where $\calA$ is a unital commutative associative algebra isomorphic to either $\bF[X]/(X^4)$, or to $\bF[X,Y]/(X^2,Y^2)$, with the usual bracket $[x\otimes a,y\otimes b]=[x,y]\otimes ab$ for $x,y\in \frsl_2(\bF)$ and  $a,b\in\calA$.
\end{proposition}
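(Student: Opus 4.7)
The plan is first to isolate the structure of $\frg$ by a dimension count, then to rule out a one-dimensional summand $\frd$ of trivial $\frs$-modules, and finally to classify the resulting four-dimensional commutative associative algebra $\calJ$. Writing $\frg$ as in \eqref{eq:gslVVd} and \eqref{eq:R_J}, the hypothesis of four copies of the adjoint module gives $\dim\calJ=4$, so $\frsl(V)\otimes\calJ$ alone accounts for $12$ dimensions; since $\dim\calT$ is even by Lemma \ref{le:J_T}, the constraint $\dim(V\otimes\calT)+\dim\frd\le 1$ forces $\calT=0$ and $\dim\frd\in\{0,1\}$.

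The main obstacle is to show $\dim\frd=0$. Assume, toward a contradiction, that $\frd=\bF d$. Since $\calJ$ is commutative and associative (Lemma \ref{le:J_T}), Proposition \ref{pr:conditions_g}\,(3) gives $D_{a,b}(c)=a(bc)-b(ac)=0$ for all $a,b,c\in\calJ$, so each $D_{a,b}\in\frd$ annihilates $\calJ$. I then split on the representation $\rho\colon\frd\to\Der(\calJ)$. If $\rho=0$, then $d$ commutes with $\frsl(V)\otimes\calJ$ and with $\frs$, so $\frd\subseteq\frz(\frg)$; as $\frd$ is $B$-orthogonal to $\frsl(V)\otimes\calJ$ (different $\frs$-isotypes) and $B$ is globally nondegenerate, the restriction $B|_\frd$ is nondegenerate, making $\frd$ a one-dimensional nondegenerate ideal of $\frg$, contradicting irreducibility. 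If instead $\rho$ is injective, then every $D_{a,b}=0$, so the bracket on $\frsl(V)\otimes\calJ$ closes, and the relation $[d,f\otimes a]=f\otimes d(a)\in\frsl(V)\otimes\calJ$ shows that $\frsl(V)\otimes\calJ$ is a proper ideal carrying the nondegenerate form $\trace\otimes B_{\calJ}$, again against irreducibility. Hence $\frd=0$ and $\frg=\frsl(V)\otimes\calJ$.

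It remains to classify the four-dimensional unital commutative associative local algebra $\calJ=\bF 1\oplus\calJ_0$ equipped with the nondegenerate associative symmetric form $B_\calJ$; this is a local Frobenius algebra, whose socle $\ann_\calJ(\calJ_0)$ must therefore be one-dimensional. I would split according to $\dim(\calJ_0/\calJ_0^2)$. If this dimension equals $1$, Nakayama together with $\calJ_0^4=0$ forces $\calJ_0=\bF a\oplus\bF a^2\oplus\bF a^3$ with $a^4=0$, giving $\calJ\cong\bF[X]/(X^4)$. If it equals $2$, then the induced symmetric bilinear map $(\calJ_0/\calJ_0^2)^{\otimes 2}\to\calJ_0^2$ has to be nondegenerate, else the socle would have dimension at least $2$; over an algebraically closed field any such form is hyperbolic, so there exist generators $a,b$ with $a^2=b^2=0$ and $ab\ne 0$, whence $\calJ\cong\bF[X,Y]/(X^2,Y^2)$. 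The remaining possibility $\calJ_0^2=0$ produces a three-dimensional socle and is excluded. With $D_{a,b}=0$ in hand, the bracket from \eqref{eq:g[]} reduces to the standard tensor-product bracket, finishing the identification $\frg\cong\frsl_2(\bF)\otimes\calA$.
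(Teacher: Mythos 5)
Your proposal is correct and takes essentially the same approach as the paper: the same decomposition \eqref{eq:gslVVd} and dimension count forcing $\calT=0$ and $\dim\frd\leq 1$, the same elimination of $\frd$ through nondegenerate-ideal contradictions (the paper splits on whether some $D_{a,b}\ne 0$ rather than on the action of $\frd$ on $\calJ$, but since $D_{a,b}(\calJ)=0$ by Lemma \ref{le:J_T} the two case analyses coincide), and the same classification of $\calJ$ by the codimension of $\calJ_0^2$ in $\calJ_0$. Your only variation is cosmetic: you exclude the degenerate (rank-one) case of the induced form on $\calJ_0/\calJ_0^2$, and the possibility $\calJ_0^2=0$, by citing the one-dimensionality of the socle $\ann_\calJ(\calJ_0)=\calJ_0^\perp$ of the local Frobenius algebra $(\calJ,B_\calJ)$, which is exactly the orthogonality computation ($B_\calJ(b,\calJ_0)=0=B_\calJ(a^2,\calJ_0)$ contradicting nondegeneracy) that the paper carries out by hand.
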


\begin{proof}
Under our hypotheses we have $\dim \calJ=4$, $\calT=0$ and $\dim\frd\leq 1$ in \eqref{eq:gslVVd}. If there are elements $a,b\in \calJ_0$ such that $D_{a,b}\ne 0$. Then $\frd=\bF D_{a,b}$, and hence $\frd$ acts trivially on $\calJ$ by Lemma \ref{le:J_T}, so that $\frd$ is the center of $\frg$, and thus $\frd$ is a nondegenerate ideal of $\frg$, a contradiction. Therefore $D_{\calJ,\calJ}=0$, but then $\frsl(V)\otimes \calJ$ is a nondegenerate ideal of $\frg$, so it is the whole $\frg$ by irreducibility.

As above, if the codimension of $\calJ_0^2$ in $\calJ_0$ is $1$, we get that $\calJ$ is generated by an element $a$ with $a^4=0$, so $\calJ$ is isomorphic to $\bF[X]/(X^4)$. Otherwise $\calJ_0^2=\bF c$, and the bilinear map $\bigl(\calJ_0/\calJ_0^2\bigr)\times \bigl(\calJ_0/\calJ_0^2\bigr)\rightarrow \calJ_0^2$ given by $(a+\calJ_0^2,b+\calJ_0^2)\mapsto ab$ is bilinear, symmetric and nonzero. Hence, since $\bF$ is algebraically closed, there are elements $a,b\in \calJ_0\setminus \calJ_0^2$ such that either $a^2=b^2=0$ and $a  b\ne 0$ (if the rank of this bilinear map is $2$) and in this case $\calJ$ is isomorphic to $\bF[X,Y]/(X^2,Y^2)$, or $a^2\ne 0$ and $a  b=b^2=0$ (if the rank is $1$). In the latter case, the bilinear form $B_\calJ$ in \eqref{eq:B_BJ_BT} satisfies $B_\calJ(b,\calJ_0)=B_\calJ(1,b  \calJ_0)=0$, and also $B_\calJ(a,a^2)=B_\calJ(1,a^3)=0$ and $B_\calJ(a^2,a^2)=B_\calJ(1,a^4)=0$. Thus $B_\calJ(a^2,\calJ_0)=0$ and we get a contradiction with $B_\calJ$ being nondegenerate, so the latter case cannot occur.
\end{proof}

\begin{remark}
The two possibilities in Proposition \ref{pr:dimJ_4} are indeed quadratic Lie algebras, with invariant scalar product given by $B(f\otimes a,g\otimes b)=\trace(fg)B_{\calA}(a,b)$ for any $f,g\in \frsl(V)$ and $a,b\in \calA$, where $B_{\calA}(x^i,x^j)=1$ if and only if $i+j=3$ in the first case, with $x$ being the class of $X$ modulo $(X^4)$, and where $B_{\calA}(1,xy)=1=B_{\calA}(x,y)=1$ and $B_{\calA}$ gives $0$ for any other basic elements $1,x,y,xy$, where $x$ and $y$ are the classes of $X$ and $Y$ modulo $(X^2,Y^2)$ in the second case. \qed
\end{remark}

\smallskip

\begin{example}\label{ex:V2V1V1}
Let $V$ be a two-dimensional vector space as above, endowed with a nondegenerate skew-symmetric bilinear form $(.\vert .)$. On the vector space $\fra=\frsl(V)\oplus (V\otimes V)$ consider the symmetric nondegenerate bilinear form $B_\fra$ such that $B_\fra(\frsl(V),V\otimes V)=0$, $B_\fra(f,g)=\trace(fg)$ and $B_\fra(u_1\otimes v_1,u_2\otimes v_2)=(u_1\vert u_2)(v_1\vert v_2)$, for $f,g\in\frsl(V)$ and $u_1,u_2,v_1,v_2\in V$. Consider the vector space $\fra$ as an \emph{abelian} Lie algebra.

The linear map $\varphi:\frsl(V)\rightarrow \Der(\fra,B_\fra)$ given by $\varphi(f)(g)=[f,g]$, $\varphi(f)(u\otimes v)=f(u)\otimes v$, for any $f,g\in \frsl(V)$ and $u,v\in V$, is a Lie algebra homomorphism. The double extension $T_\varphi(\fra,B_\fra,\frsl(V))$ is an irreducible quadratic Lie algebra, with a Levi subalgebra isomorphic to $\frsl_2(\bF)$ and such that, as a module for this subalgebra, it decomposes as the direct sum of three copies of the adjoint module, and two copies of the natural two-dimensional module. Its dimension is then $13$. \qed
\end{example}

\begin{proposition}\label{pr:dimJ_3}
Let $({\frg}= \frs\oplus {\frr},B)$ be a non solvable and non simple irreducible quadratic Lie algebra of dimension $\leq 13$, where $\frs$ is isomorphic to $\frsl_2(\bF)$ and $\frr$ is the solvable radical of $\frg$.

If, as a module for $\frs(\cong\frsl_2(\bF))$, $\frg$ contains three copies of the adjoint module, then either $\frg$ is isomorphic to $\frsl_2(\bF)\otimes \calA$,
 where $\calA$ is a unital commutative associative algebra isomorphic to  $\bF[X]/(X^3)$, or $\frg$ is isomorphic to the Lie algebra
in Example \ref{ex:V2V1V1}.
\end{proposition}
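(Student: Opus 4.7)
The plan is to first pin down $\calJ$, then handle the cases $\calT=0$ and $\calT\neq 0$ separately. By Lemma~\ref{le:J_T}, $\calJ$ is a three-dimensional unital commutative associative algebra with two-dimensional nilpotent radical $\calJ_0$, so either $\calJ\cong\bF[X]/(X^3)$ or $\calJ\cong\bF[X,Y]/(X^2,XY,Y^2)$. I would rule out the latter by observing that associativity of $B_\calJ$ (see \eqref{eq:B_BJ_BT}) gives $B_\calJ(\calJ_0,\calJ_0)=B_\calJ(1,\calJ_0^2)=0$, so in a basis $\{1,x,y\}$ of $\calJ$ with $x,y$ spanning $\calJ_0$ the Gram matrix of $B_\calJ$ has a singular lower-right $2\times 2$ block, contradicting nondegeneracy. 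Hence $\calJ\cong\bF[X]/(X^3)$; the relation $c\calJ_0=0$ from Lemma~\ref{le:J_T} forces $c\in\bF x^2$, and after scaling one may take $c=x^2$. Then Proposition~\ref{pr:conditions_g}(3) with $a=b=c=x$ yields $3D_{x^2,x}=0$, which combined with skew-symmetry and $D_{1,\cdot}=0$ gives $D_{a,b}=0$ for all $a,b\in\calJ$.

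The dimension inequality $9+2\dim\calT+\dim\frd\leq 13$ leaves only two cases (since $\dim\calT$ is even): either $\calT=0$ with $\dim\frd\leq 4$, or $\dim\calT=2$ with $\dim\frd=0$. If $\calT=0$, then $\fri:=\frsl(V)\otimes\calJ$ is an ideal of $\frg$ (closed under its own bracket since $D=0$, and preserved by the $\frd$-action by derivations on $\calJ$), and $B|_\fri=\trace\otimes B_\calJ$ is nondegenerate. Irreducibility then forces $\fri=\frg$, hence $\frd=0$ and $\frg\cong\frsl_2(\bF)\otimes\bF[X]/(X^3)$.

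If $\dim\calT=2$ and $\frd=0$, so $\dim\frg=13$, I would argue as follows. Invariance of $B$ applied to $[\frsl(V)\otimes a,V\otimes t]=f(u)\otimes(a\bullet t)$ gives $B_\calT(a\bullet t,s)=B_\calT(t,a\bullet s)$ for all $a\in\calJ$, i.e., each $a\bullet$ on the two-dimensional symplectic space $(\calT,B_\calT)$ is $B_\calT$-self-adjoint. A direct calculation on a symplectic basis of $\calT$ shows such operators must be scalar multiples of the identity, so $x\bullet=\alpha\,\id$; but $(x\bullet)^2=x^2\bullet=c\bullet=0$ forces $\alpha=0$ and hence $\calJ_0\bullet\calT=0$. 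Proposition~\ref{pr:conditions_g}(6) with $a=x$ now forces $\langle\calT|\calT\rangle\subseteq\bF x^2=\bF c$, so $\langle t|s\rangle=\mu B_\calT(t,s)\,c$ for some $\mu\in\bF$. If $\mu=0$, the subspace $V\otimes\calT$ would be an abelian ideal with nondegenerate $B$-restriction, contradicting irreducibility; so $\mu\neq 0$, and by Remark~\ref{re:double_extension} one may rescale $B_\calT$ to obtain $\mu=1$. All structure constants of $\frg$ are now determined, and they match those computed directly from the double-extension construction of Example~\ref{ex:V2V1V1}; hence $\frg$ is isomorphic to that Lie algebra. The main obstacle is this last case, and within it the argument that $x\bullet\calT=0$: the symplectic self-adjointness computation is the crux and depends essentially on both $\dim\calT=2$ and the nondegeneracy of $B_\calT$.
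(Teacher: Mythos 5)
Your proof is correct, and its skeleton coincides with the paper's: pin down $\calJ\cong\bF[X]/(X^3)$ via the nondegeneracy of $B_\calJ$, kill $D_{\calJ,\calJ}$, split on $\dim\calT\in\{0,2\}$ by dimension count, and in each case either exhibit a nondegenerate ideal or determine all structure constants and match with Example \ref{ex:V2V1V1}. The genuine difference is at the crux you correctly single out, the proof that $\calJ_0\bullet\calT=0$. The paper argues by contradiction: if $a\bullet x\ne 0$ then $\{x,a\bullet x\}$ is a basis of $\calT$, forcing $\langle x\vert a\bullet x\rangle\ne 0$ since $\langle\calT\vert\calT\rangle\ne 0$ and the form is alternating on a two-dimensional space, whereas Proposition \ref{pr:conditions_g}\,(7) together with $d_{x,x}\in\frd=0$ gives $2\langle x\vert a\bullet x\rangle=-d_{x,x}(a)=0$. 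You instead extract from the invariance of $B$ that each operator $a\bullet$ is $B_\calT$-self-adjoint (the sign works out because $\frsl(V)=\frsp\bigl(V,(\cdot\vert\cdot)\bigr)$, so $B([u\otimes t,f\otimes a],v\otimes s)=B(u\otimes t,[f\otimes a,v\otimes s])$ yields self- rather than skew-adjointness), note that on a two-dimensional symplectic space such operators are scalar, and kill the scalar by nilpotency, since $(x\bullet)^2=x^2\bullet=c\bullet=0$ by the special-module identity and Lemma \ref{le:J_T}. This variant buys something: it never uses $\frd=0$, only $\dim\calT=2$, and it replaces the paper's case analysis by a structural statement; your derivation of $D_{\calJ,\calJ}=0$ from Proposition \ref{pr:conditions_g}\,(3) with $a=b=c=x$ is likewise a self-contained alternative to the paper's appeal to $D_{c,\calJ}=0$. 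One small imprecision at the end: you invoke Remark \ref{re:double_extension} to ``rescale $B_\calT$'' and achieve $\mu=1$, but $B_\calT$ does not appear in the Lie bracket at all; what is needed (and suffices) is to rescale a basis vector of $\calT$, or the generator $x$ of $\calJ_0$ and hence $c=x^2$, which over an algebraically closed field absorbs any nonzero $\mu$ --- this is exactly how the paper normalizes, choosing a basis $\{x,y\}$ of $\calT$ with $\langle x\vert y\rangle=c$, and it does not affect the correctness of your conclusion.
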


\begin{proof}
We are in the situation where $\dim \calJ=3$ in \eqref{eq:gslVVd}. Also, $\calJ_0^2\ne 0$. Otherwise $B_\calJ(\calJ_0,\calJ_0)=B_\calJ(1,\calJ_0^2)$ would be $0$ and $B_\calJ$ would be degenerate. The arguments in the proof of Proposition \ref{pr:dimJ_4} show that $\calJ$
is isomorphic to $F[X]/(X^3)$. Take $a\in \calJ_0\setminus \calJ_0^2$, then $a^3=0$ and $\frr^\perp=\frsl(V)\otimes a^2$ (so we may take $c=a^2$).
Also, since $D_{1,\calJ}=D_{a^2,\calJ}=0$ and $D_{.,.}$ is skew-symmetric, we get $D_{\calJ,\calJ}=0$. If $\calT=0$, then it follows that $\frsl(V)\otimes \calJ$
is a nondegenerate ideal of $\frg$, and we obtain that $\frg$ is isomorphic to $\frsl_2(\bF)\otimes\calA$, with $\calA=\bF[X]/(X^3)$.

Otherwise, by dimension count, we must have $\dim \calT=2$ and $\dim\frd=0$. Since $\frr^\perp\subseteq \frz(\frr)$ (Proposition \ref{L1}),
we have $0=a^2\bullet \calT=a\bullet (a\bullet \calT)$. The map $\calT\times \calT\rightarrow \calJ$, $(x,y)\mapsto \langle x\vert y\rangle$
is skew-symmetric. If it were trivial, $V\otimes \calT$ would be a nondegenerate ideal of $\frg$, a contradiction.
Hence $\langle \calT\vert \calT\rangle\ne 0$. If $a\bullet \calT$ is nonzero, then there is an element $x\in \calT$ such
that $\{x,a\bullet x\}$ is a basis of $\calT$. Then we must have $\langle x\vert a\bullet x\rangle\ne 0$ but,
because of Proposition \ref{pr:conditions_g}, we have
$0=\langle a\bullet x\vert x\rangle -\langle x\vert a\bullet x\rangle=-2\langle x\vert a\bullet x\rangle =d_{x,x}(a)$,
which is trivial since $\frd=0$ and thus $d_{x,x}=0$.
Thus $a\bullet \calT=0$ and hence $[\frsl(V)\otimes \calJ_0, V\otimes \calT]=0$. Again by Proposition \ref{pr:conditions_g} we have $\langle \calT\vert \calT\rangle   \calJ_0=0$, so $\langle \calT\vert \calT\rangle =\bF a^2$. Therefore we may pick up a basis $\{x,y\}$ of $\calT$ with $\langle x\vert y\rangle =a^2$, and the multiplication of $\frg$ is completely determined. We conclude that there is a unique possibility, up to isomorphism, and the result follows.
\end{proof}

We are then left with the case $\dim \calJ=2$. Then $\calJ$ is isomorphic to the algebra of dual numbers $\bF[X]/(X^2)$, i.e., $\calJ=\bF 1+\bF c$ with $c^2=0$. By dimension count, either $\calT=0$ or $\dim \calT=2$, and if $\calT=0$, $\frsl(V)\otimes \calJ$ is a nondegenerate ideal, and we get the trivial $T^\star$ extension in Proposition \ref{pr:R_Rperp}. Therefore we may assume $\dim \calT=2$. Again by dimension count $\dim\frd\leq 3$. Let us start with several examples.

\begin{example}\label{ex:V1V1}
Let $V$ be a two-dimensional vector space as above, endowed with a nondegenerate skew-symmetric bilinear form $(.\vert .)$. On the vector space $\frb=V\otimes V$ consider the symmetric nondegenerate bilinear form $B_\frb$ such that  $B_\frb(u_1\otimes u_2,v_1\otimes v_2)=(u_1\vert v_1)(u_2\vert v_2)$ for any $u_1,u_2,v_1,v_2\in V$. Consider the vector space $\frb$ as an \emph{abelian} Lie algebra. The linear map $\varphi:\frsl(V)\rightarrow \Der(\frb,B_\frb)$ given by  $\varphi(f)(u\otimes v)=f(u)\otimes v$, for any $f\in \frsl(V)$ and $u,v\in V$, is a Lie algebra homomorphism. The double extension $T_\varphi(\frb,B_\frb,\frsl(V))$ is a quadratic Lie algebra, with a Levi subalgebra isomorphic to $\frsl_2(\bF)$ and such that, as a module for this subalgebra, it decomposes as the direct sum of two copies of the adjoint module, and two copies of the natural two-dimensional module. Its dimension is then $10$. \qed
\end{example}

\begin{example}\label{ex:V1V1ddiagonal}
Let $V$ and $(.\vert.)$ as before. Consider the abelian Lie algebra $\frb=V\otimes V$ as in Example \ref{ex:V1V1}. Fix a basis $\{u,v\}$ of $V$ with $(u\vert v)=1$, and the one-dimensional Lie algebra $\bF d$. Let $\varphi:\bF d\rightarrow \Der(\frb)$ be the Lie algebra homomorphism such that $\varphi(d)(u_1\otimes u)=u_1\otimes u$, $\varphi(d)(u_1\otimes v)=-u_1\otimes v$, for any $u_1\in V$. Then the double extension $\hat\frb=T_\varphi(\frb,B_\frb,\bF d)=\bF d\ltimes_{\bar\varphi}(\frb\oplus \bF d^*)$ ($d^*\in (\bF d)^*$ with $d^*(d)=1$)
is a solvable Lie algebra of dimension $6$, endowed with an invariant scalar product $B_{\hat\frb}$ such that $B_{\hat\frb}(\bF d+ \bF d^*,\frb)=0$, $B_{\hat\frb}\vert_\frb =B_\frb$ and $B_{\hat\frb}(d,d^*)=1$, $B_{\hat\frb}(d,d)=0=B_{\hat\frb}(d^*,d^*)$.

Consider now the Lie algebra homomorphism $\phi:\frsl(V)\rightarrow \Der(\hat\frb,B_{\hat\frb})$ given by $\phi(s)(\bF d+\bF d^*)=0$, $\phi(s)(u_1\otimes u_2)=s(u_1)\otimes u_2$ for any $s\in \frsl(V)$ and $u_1,u_2\in V$. The double extension $T_{\phi}(\hat\frb,B_{\hat\frb},\frsl(V))$ is an irreducible quadratic Lie algebra, with a Levi subalgebra isomorphic to $\frsl_2(\bF)$ and such that, as a module for this subalgebra, it decomposes as the direct sum of two copies of the adjoint module, two copies of the natural two-dimensional module, and two copies of the trivial one-dimensional module. Moreover, the subalgebra formed by the two copies of the trivial one-dimensional module act diagonally on the sum of the two natural modules. \qed
\end{example}

\begin{example}\label{ex:V1V1dnilpotent}
Let $V$ and $(.\vert.)$ as before. Consider the abelian Lie algebra $\frb=V\otimes V$ as in Example \ref{ex:V1V1}. Fix, as before, a basis $\{u,v\}$ of $V$ with $(u\vert v)=1$, and the one-dimensional Lie algebra $\bF d$. Let $\varphi':\bF d\rightarrow \Der(\frb)$ be the Lie algebra homomorphism, such that $\varphi'(d)(u_1\otimes u)=u_1\otimes v$, $\varphi'(d)(u_1\otimes v)=0$, for any $u_1\in V$. Then the double extension $\hat\frb'=T_{\varphi'}(\frb,B_\frb,\bF d)=\bF d\ltimes_{\bar\varphi'}(\frb\oplus \bF d^*)$ ($d^*\in (\bF d)^*$ with $d^*(d)=1$)
is a solvable Lie algebra of dimension $6$, endowed with an invariant scalar product $B_{\hat\frb'}$ given by the same formulas as $B_{\hat\frb}$ in Example \ref{ex:V1V1ddiagonal}.

Consider now the Lie algebra homomorphism $\phi':\frsl(V)\rightarrow \Der(\hat\frb',B_{\hat\frb})$ given by the same formulas as in Example \ref{ex:V1V1ddiagonal}.  The double extension $T_{\phi'}(\hat\frb',B_{\hat\frb'},\frsl(V))$ is an irreducible quadratic Lie algebra, with a Levi subalgebra isomorphic to $\frsl_2(\bF)$ and such that, as a module for this subalgebra, it decomposes as the direct sum of two copies of the adjoint module, two copies of the natural two-dimensional module, and two copies of the trivial one-dimensional module. Moreover, the subalgebra formed by the two copies of the trivial one-dimensional module act in a nilpotent way on the sum of the two natural modules. \qed
\end{example}

\begin{example}\label{ex:V1V1d}
Again, let $V$, $(.\vert.)$ and $\{u,v\}$ as before. Consider the nilpotent Lie algebra $\frc=(V\otimes V)\oplus\bF d$ with multiplication given by $[u_1\otimes u,v_1\otimes u]=(u_1\vert v_1)d$, $[u_1\otimes u,v_1\otimes v]=0=[u_1\otimes v,v_1\otimes v]$, and $[d,u_1\otimes u]=u_1\otimes v$, $[d,u_1\otimes v]=0$, for any $u_1,v_1\in V$. The Lie algebra $\frc$ is endowed with an invariant scalar product $B_{\frc}$ such that $B_{\frc}(V\otimes V,d)=0$, $B_{\frc}(u_1\otimes u_2,v_1\otimes v_2)=(u_1\vert v_1)(u_2\otimes v_2)$ for any $u_1,v_1,u_2,v_2\in V$, and $B_{\frc}(d,d)=-1$.

Consider the Lie algebra homomorphism $\varphi:\frsl(V)\rightarrow \Der(\frc,B_\frc)$ given by $\varphi(f)(u_1\otimes u_2)=f(u_1)\otimes u_2)$, $\varphi(f)(d)=0$, for any $f\in \frsl(V)$ and $u_1,u_1\in V$. The double extension $T_\varphi(\frc,B_\frc,\frsl(V))$ is an irreducible quadratic Lie algebra, with a Levi subalgebra isomorphic to $\frsl_2(\bF)$ and such that, as a module for this subalgebra, it decomposes as the direct sum of two copies of the adjoint module, two copies of the natural two-dimensional module, and a trivial one-dimensional module, which acts in a nilpotent way on the sum of the two natural modules. \qed
\end{example}

\begin{proposition}\label{pr:dimJ_2}
Let $({\frg}= \frs\oplus {\frr},B)$ be a non solvable and non simple irreducible quadratic Lie algebra of dimension $\leq 13$, where $\frs$ is isomorphic to $\frsl_2(\bF)$ and $\frr$ is the solvable radical of $\frg$, with $\frr\ne \frr^\perp$.

If, as a module for $\frs(\cong\frsl_2(\bF))$, $\frg$ contains two copies of the adjoint module, then $\frg$ is isomorphic to the Lie algebra in one of the Examples \ref{ex:V1V1}, \ref{ex:V1V1ddiagonal}, \ref{ex:V1V1dnilpotent} or \ref{ex:V1V1d}.
\end{proposition}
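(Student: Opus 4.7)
The setup forces $\dim\calJ=2$, $\dim\calT=2$ (the case $\calT=0$ being excluded by $\frr\ne \frr^\perp$), and $\dim\frd\leq 3$. Write $\calJ=\bF 1+\bF c$ with $c^2=0$. I would first distill, in this setting, the consequences of Proposition~\ref{pr:conditions_g} and Lemma~\ref{le:J_T}: $\frd$ acts trivially on $\calJ$; $c\bullet \calT=0$; $D_{\calJ,\calJ}=0$ (by skew-symmetry and $D_{1,\cdot}=0$); $d_{x,y}|_\calJ=0$; item (8) combined with $\bF c\bullet\calT=0$ yields $d_{x,y}(z)=d_{z,y}(x)$; and item (6) forces $\langle \calT|\calT\rangle\subseteq \bF c$, so one may write $\langle x|y\rangle=\omega(x,y)\,c$ for a skew form $\omega$ on the two-dimensional $\calT$.

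Next I would prove $\omega\neq 0$. If $\omega\equiv 0$, then $[V\otimes\calT, V\otimes\calT]\subseteq \frd$, making $(V\otimes\calT)\oplus\frd$ an ideal of $\frg$; by the block-orthogonality and block-nondegeneracy of $B$ with respect to~\eqref{eq:gslVVd}, this ideal would be proper and nondegenerate, contradicting irreducibility. Hence $\omega$ is (up to scalar) the symplectic form on $\calT$. A Jacobi computation at $[d,[V\otimes x, V\otimes y]]$, together with $d(c)=0$, yields $\omega(d(x),y)+\omega(x,d(y))=0$ for every $d\in\frd$, so the natural map $\frd\to \frsp(\calT)\cong \frsl_2(\bF)$ is a Lie algebra homomorphism; as $\frd\subseteq \frr$ is solvable, its image has dimension at most two. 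At this point $\frg$ is recognizable as the double extension $T_\varphi\bigl((V\otimes\calT)\oplus\frd,\,B_1,\,\frsl(V)\bigr)$, with $B_1$ the restriction of $B$, with $\frsl(V)$ acting on the first factor of $V\otimes\calT$ and trivially on $\frd$, and with $\frsl(V)\otimes c=\frr^\perp$ playing the role of $\frsl(V)^*$.

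I would then split on $\dim\frd$. For $\dim\frd=0$, $V\otimes\calT$ is abelian and one recovers Example~\ref{ex:V1V1}. For $\dim\frd=1$, the generator $d$ must act nontrivially on $\calT$ (otherwise $\bF d$ is a nondegenerate central ideal); combining the symmetry relation $d_{x,y}(z)=d_{z,y}(x)$ with the invariance of $B|_\frd$ forces the image of $d$ in $\End(\calT)$ to be at most one-dimensional, and since this image is trace-zero of rank one, it is nilpotent, matching Example~\ref{ex:V1V1d}. For $\dim\frd=2$, the image of $\frd$ in $\frsp(\calT)$ cannot be the Borel subalgebra (invariance of $B|_\frd$ forces $B(h,e)=B(e,e)=0$ for standard generators $[h,e]=2e$, making $B|_\frd$ degenerate); hence the image is one-dimensional and the one-dimensional kernel is an isotropic central ideal. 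Up to $\frsp(\calT)$-conjugation the generator of the image acts on $\calT$ semisimply or nilpotently, yielding Examples~\ref{ex:V1V1ddiagonal} and~\ref{ex:V1V1dnilpotent} respectively. Finally $\dim\frd=3$ is excluded: the image has dimension at most one (the Borel case is ruled out as above, and a zero image would make $\frd$ a nondegenerate central ideal), so the central kernel $\frk$ has dimension two; but $B|_\frk$ cannot have rank two (else $\frk$ is a nondegenerate ideal), rank one (produces a one-dimensional nondegenerate central ideal), or rank zero (forces $\frk\subseteq \frk^\perp\cap\frd$, a one-dimensional subspace, contradicting $\dim\frk=2$).

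The delicate part is the case analysis for $\dim\frd\in\{1,2\}$, where one must simultaneously exploit the symplectic condition $\frd\to\frsp(\calT)$, the symmetry $d_{x,y}(z)=d_{z,y}(x)$, the Jacobi identity, and the invariance and nondegeneracy of $B|_\frd$, in order to pin down each structure up to isomorphism; the scaling freedom from Remark~\ref{re:double_extension} is used to absorb the residual scalar ambiguities.
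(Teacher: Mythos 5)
Your skeleton is essentially the paper's: the same derived identities ($D_{\calJ,\calJ}=0$, $c\bullet\calT=0$, $d(\calJ)=0$, $\langle\calT\vert\calT\rangle\subseteq\bF c$), the same use of irreducibility to force $\langle\calT\vert\calT\rangle\ne 0$, the same homomorphism $\frd\to\frsp(\calT,B_\calT)$ with isotropic central kernel, the same bound $\dim\frd\leq 2$, and the same four-way outcome. Your cosmetic variations (splitting by $\dim\frd$ rather than by the semisimple/nilpotent type of a nontrivially acting $d$; obtaining the symplectic condition from Jacobi applied to $\omega$ rather than from the invariance of $B$ and $B_\calT$; ruling out a two-dimensional image by the Borel computation $B(h,e)=B(e,e)=0$ rather than by abelianness of the image) are all sound and lead to the same examples.

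There are, however, two real gaps. First, you never prove that $\frd$ is abelian, and you use it implicitly throughout: your claims that the kernel of $\frd\to\frsp(\calT,B_\calT)$ is \emph{central} (needed for the $\dim\frd=2$ identification and for your $\dim\frd=3$ exclusion, where "the Borel case is ruled out as above" does not literally apply because the map is no longer injective) require $[\frk,\frd]=0$. The paper settles this up front: $(\frd,B\vert_\frd)$ is a solvable quadratic Lie algebra of dimension $\leq 3$, so by Proposition \ref{LEM} one has $\frz(\frd)=[\frd,\frd]^\perp\ne 0$, whence $\dim[\frd,\frd]\leq 1$ and $\frz(\frd)$ has codimension $\leq 1$, forcing $\frd$ abelian; your own Borel computation is the two-dimensional instance of this, so the patch is short, but it is missing. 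Second, and more seriously, the step you defer as "the delicate part" is precisely where the proof lives: the identifications in the cases $\dim\frd\in\{1,2\}$ rest on the $\frd$-invariance of $(z,t)\mapsto d_{z,t}$ \emph{together with abelianness}, via $0=[d,d_{x,x}]=d_{d(x),x}+d_{x,d(x)}$, which kills $d_{x,x},d_{y,y}$ in the diagonal case and $d_{x,y},d_{y,y}$ in the nilpotent case, and on the computation $B(d_{x,x},d_{x,x})=B_\calT(y,x)\ne 0$ (making $[\frg,\frg]$ a nondegenerate ideal), which is what separates Example \ref{ex:V1V1d} from Example \ref{ex:V1V1dnilpotent}. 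In particular your assertion in the $\dim\frd=1$ case that the image of $d$ is "trace-zero of rank one, hence nilpotent" is stated, not derived; excluding the semisimple action there needs exactly this chain (e.g.\ $d_{x,x}=d_{y,y}=0$, $d_{x,y}(\calT)=0$, and $B(d,d_{x,y})=B_\calT(d(x),y)\ne 0$ forcing $0\ne d_{x,y}\in\bF d$ to act trivially, a contradiction). So the proposal is the right route but stops short of the computations that actually pin down the four isomorphism classes.
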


\begin{proof}
We are in the situation where $\dim \calJ=2$ in \eqref{eq:gslVVd}. Here $\calJ$ is isomorphic to $\bF[X]/(X^2)$, i.e.,
$\calJ=\bF 1+\bF c$ with $\calJ_0=\bF c$ and $c^2=0$. Since $D_{1,\calJ}=0$, we have $D_{\calJ,\calJ}=0$.
If $\calT=0$, then $\frsl(V)\otimes\calJ$ is a nondegenerate ideal, so it is the whole $\frg$, a contradiction with $\frr\ne\frr^\perp$. Hence we have
 $\dim \calT=2$ and $\dim \frd\leq 3$.
Also, since $\frr^\perp=\frsl(V)\otimes c\subseteq \frz(\frr)$
we have $\calJ_0\bullet \calT=0$ and $d(\calJ)=0$ for any $d\in \frd$.
By irreducibility of $\frg$, $(V\otimes \calT)\oplus\frd$ cannot be an ideal of $\frg$,
and hence $\langle \calT\vert \calT\rangle$ is not zero. Thus, for any basis $\{x,y\}$ of $\calT$,
we have $\calJ_0=\bF \langle x\vert y\rangle$.

$\frd$ is a solvable quadratic Lie algebra with $\dim\frd\leq 3$, and hence it is  abelian.
Indeed,
by Proposition \ref{LEM},  ${\frz}({\frd})\not= \{0\}$ and hence $\dim[\frd,\frd]\leq\dim\wedge^2\bigl(\frd/\frz(\frd)\bigr)\leq 1$. Thus $\dim\frd=\dim[\frd,\frd]+\dim[\frd,\frd]^\perp=\dim[\frd,\frd]+\dim\frz(\frd)$. Thus the codimension of $\frz(\frd)$ is $\leq 1$ and $\frd$ is abelian.

The invariance of the scalar product forces that the action of $\frd$ on $\calT$ gives a Lie
algebra homomorphism $\frd\rightarrow \frsp(\calT,B_\calT)$ (the symplectic Lie algebra of the skew-symmetric bilinear form $B_\calT$ as in \eqref{eq:B_BJ_BT}). The image of this homomorphism is an abelian Lie algebra of $\frsp(\calT,B_\calT)\cong\frsl_2(\bF)$, so its dimension is at most $1$. On the other hand, the kernel of this homomorphism: $\{d\in\frd : [d,V\otimes\calT]=0\}$, coincides with the center of $\frg$, which is isotropic because of the irreducibility of $\frg$. Since $\dim \frd\leq 3$, we conclude that the kernel has dimension at most $1$, and hence we have $\dim\frd\leq 2$.

If $\frd=0$ the Lie algebra $\frg$ is uniquely determined, and hence it is isomorphic to the Lie algebra in Example \ref{ex:V1V1}. Otherwise, $\frd$ is not an ideal ($\frg$ is irreducible) so the image of the homomorphism above is not trivial, i.e., there is an element $d\in \frd$ such that $d(\calT)\ne 0$.

If $d$ acts faithfully on $\calT$, we may scale $d$ and take a basis $\{x,y\}$ of $\calT$ with $B_\calT(x,y)=1$, $d(x)=x$ and $d(y)=-1$ (note that the action of $d$ lies in $\frsp(\calT,B_\calT)$ and hence its trace is $0$). But the symmetric bilinear map $\calT\times \calT\rightarrow \frd$, $(z,t)\mapsto d_{z,t}$ is invariant under the action of $\frd$. Hence, $[d,d_{x,x}]=d_{d(x),x}+d_{x,d(x)}=2d_{x,x}$. Since $\frd$ is abelian, we get $d_{x,x}=0$, and also $d_{y,y}=0$. But Proposition \ref{pr:conditions_g} shows that $d_{x,y}(x)=d_{x,x}(y)=0$, $d_{x,y}(y)=d_{y,y}(x)=0$, so $d_{x,y}(\calT)=0$. If $d_{x,y}$ were $0$ we would have $d_{\calT,\calT}=0$ and get the nondegenerate ideal $(\frsl(V)\otimes \calJ)\oplus(V\otimes \calT)$ of $\frg$, a contradiction with the irreducibility of $\frg$. Hence $0\ne d_{x,y}$ spans the kernel of the Lie algebra homomorphism $\frd\rightarrow \frsp(\calT,B_\calT)$. We conclude that $\frd=\bF d+\bF d_{x,y}$ and the Lie bracket in $\frg$ is uniquely determined. Therefore $\frg$ must be, up to isomorphism, the Lie algebra in Example \ref{ex:V1V1ddiagonal}.

On the other hand, if the action of $d$ on $\calT$ is nilpotent, we may take a basis $\{x,y\}$ of $\calT$ with $d(x)=y$ and $d(y)=0$. Proposition \ref{pr:conditions_g} shows that $0=[d,d_{x,x}]=2d_{x,y}$ and $0=[d,d_{x,y}]=d_{y,y}$. Since $d_{\calT,\calT}$ must be nonzero as above, we get $d_{\calT,\calT}=\bF d_{x,x}\ne 0$. In case $d_{x,x}(\calT)=0$, the multiplication is completely determined and $\frg$ is isomorphic to the Lie algebra in Example \ref{ex:V1V1dnilpotent}.

Otherwise $d_{x,x}(\calT)\ne 0$, and since the image of $\frd$ in $\frsp(\calT,B_\calT)$ is one-dimensional, we may assume, scaling $y$ if necessary, that $d=d_{x,x}$. Then, choosing elements $u,v\in V$ with $(u\vert v)=1$ we have
\begin{multline*}
B(d_{x,x},d_{x,x})=B(d_{x,x},[u\otimes x,v\otimes x])=B([d_{x,x},u\otimes x],v\otimes x)\\
=B(u\otimes y,v\otimes x)=B_\calT(y,x)\ne 0,
\end{multline*}
and $[\frg,\frg]=(\frsp(V)\otimes \calJ)\oplus(V\otimes \calT)\oplus\bF d_{x,x}$ is a nondegenerate ideal of $\frg$. By irreducibility this is the whole $\frg$, so $\frd=\bF d_{x,x}$. Again the multiplication in $\frg$ is completely determined, and hence $\frg$ is isomorphic to the Lie algebra in Example \ref{ex:V1V1d}.
\end{proof}

\smallskip

The next result summarizes our classification:

\begin{theorem}
The complete list, up to isomorphisms, of the non solvable irreducible quadratic Lie algebras $\frg$ with $\dim\frg\leq 13$ is the following:
\begin{enumerate}
\item $\dim\frg=3$: the simple Lie algebra $\frsl_2(\bF)$,
\item $\dim\frg=6$: the trivial $T^*$-extension $T_0^\star(\frsl_2(\bF))$,
\item $\dim\frg=8$: the simple Lie algebra $\frsl_3(\bF)$,
\item $\dim\frg=9$: the `scalar extension' $\frsl_2(\bF)\otimes \bF[X]/(X^3)$,
\item $\dim\frg=10$: the simple Lie algebra $\frsp_4(\bF)\cong\frso_5(\bF)$ and the Lie algebra in Example \ref{ex:V1V1},
\item $\dim\frg=11$: the double extension $\frd(4)$ and the Lie algebra in Example \ref{ex:V1V1d},
\item $\dim\frg=12$: the `scalar extensions' $\frsl_2(\bF)\otimes \bF[X]/(X^4)$ and $\frsl_2(\bF)\otimes \bF[X,Y]/(X^2,Y^2)$ and the Lie algebras in Examples \ref{ex:V1V1ddiagonal} and \ref{ex:V1V1dnilpotent},
\item $\dim\frg=13$: the double extension $\frd(6)$ and the Lie algebra in Example \ref{ex:V2V1V1}.
\end{enumerate}
\end{theorem}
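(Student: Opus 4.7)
The plan is to run a case analysis that simply collects the outputs of the preceding propositions. First I would dispose of the simple case: over an algebraically closed field of characteristic zero, the simple Lie algebras of dimension at most $13$ are exactly $\frsl_2(\bF)$, $\frsl_3(\bF)$, and $\frsp_4(\bF)\cong\frso_5(\bF)$ (the next one, $\frg_2$, has dimension $14$), and these account for items (1), (3), and the simple entry of (5).

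For a non-simple, non-solvable, irreducible $\frg$ with Levi decomposition $\frg=\frs\oplus\frr$, the main dichotomy is $\frr=\frr^\perp$ versus $\frr\ne\frr^\perp$. If $\frr=\frr^\perp$, Corollary \ref{co:trivialTextension} forces $\frg\cong T_0^\star(\frs)$ with $\frs$ simple and $\dim\frg=2\dim\frs$, so the bound $\dim\frg\le 13$ leaves only $\frs=\frsl_2(\bF)$, producing item (2) (already $T_0^\star(\frsl_3(\bF))$ has dimension $16$). If instead $\frr\ne\frr^\perp$, Proposition \ref{pr:01246} identifies $\frs$ with $\frsl_2(\bF)$ and restricts the irreducible summands of $\frr/\frr^\perp$ to $V(0),V(1),V(2),V(4),V(6)$. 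A summand $V(6)$ or $V(4)$ triggers Proposition \ref{pr:46} and pins $\frg$ down as $\frd(6)$ or $\frd(4)$, supplying the double extensions in items (8) and (6).

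The remaining subcase uses the decomposition \eqref{eq:gslVVd} with $\frg$ built only from adjoint, natural, and trivial $\frs$-modules. Since $\frr^\perp$ is a copy of the adjoint disjoint from $\frs\otimes 1$, one has $\dim\calJ\ge 2$, so the split is $\dim\calJ\in\{2,3,4\}$. Propositions \ref{pr:dimJ_2}, \ref{pr:dimJ_3}, \ref{pr:dimJ_4} then enumerate exactly: the four algebras of Examples \ref{ex:V1V1}, \ref{ex:V1V1d}, \ref{ex:V1V1ddiagonal}, \ref{ex:V1V1dnilpotent} (of dimensions $10,11,12,12$); the scalar extension $\frsl_2(\bF)\otimes\bF[X]/(X^3)$ and the algebra of Example \ref{ex:V2V1V1} (dimensions $9$ and $13$); and finally $\frsl_2(\bF)\otimes\bF[X]/(X^4)$ together with $\frsl_2(\bF)\otimes\bF[X,Y]/(X^2,Y^2)$ (both of dimension $12$). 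Sorting by dimension fills items (4)--(8), completing the list.

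Since the heavy structural work is already in the preceding propositions, no step is a serious obstacle; the only genuine point still to record is that within each dimension the listed algebras are pairwise non-isomorphic. This is immediate from structural invariants (radical abelian or not, action of the trivial component diagonalisable or nilpotent, Jordan type of $\calJ$, presence or absence of a copy of $V(4)$ or $V(6)$), but it must be stated explicitly in dimension $12$, where four distinct algebras coexist, and noted as well for the two entries each in dimensions $10$, $11$, and $13$.
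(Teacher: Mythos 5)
Your proposal is correct and follows essentially the same route as the paper, whose ``proof'' of this theorem is precisely the aggregation you describe: the list of simple algebras of dimension $\leq 13$, Corollary \ref{co:trivialTextension} for the case $\frr=\frr^\perp$, Proposition \ref{pr:01246} and Proposition \ref{pr:46} for the summands $V(4)$ and $V(6)$, and Propositions \ref{pr:dimJ_2}, \ref{pr:dimJ_3}, \ref{pr:dimJ_4} for the split $\dim\calJ\in\{2,3,4\}$ in \eqref{eq:gslVVd}. Your closing remark on pairwise non-isomorphism within each dimension (via the $\frs$-module decomposition, the isomorphism type of $\calJ$, and the diagonalizable versus nilpotent action of $\frd$ on $\calT$) is a point the paper leaves implicit, and it is a welcome addition rather than a deviation.
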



\section{Classification of the non solvable irreducible   quadratic Lie algebras of dimension $\leq 13$  over the real field}\label{se:real}

The situation over the real field is more involved than for algebraically closed fields. However, many of the arguments in the previous section are valid. The ground field in this section is always the real field $\bR$.

Let us recall first that for the split simple real Lie algebra $\frsl_2(\bR)$ there exists an irreducible module $V_\bR(n)$ of dimension $n+1$ for each $n\in\bZ_{\geq 0}$, and these exhaust the list of finite dimensional irreducible modules, up to isomorphism. However, for the compact Lie algebra $\frsu_2(\bR)$ of skew-hermitian $2\times 2$ complex matrices, the irreducible modules are, up to isomorphism, the following (see, for instance, \cite{BenkartOsborn}):
\begin{itemize}
\item The modules $V(n)$ for $\frsl_2(\bC)$ for $n$ odd, which remain irreducible as modules for $\frsu_2(\bR)\subseteq \frsl_2(\bC)$. (Note that $\dim_\bR V(n)=2(n+1)$.)

    In particular, $V(1)\simeq \bC^2$ is the natural module for $\frsu_2(\bR)$, of (real) dimension $4$.

\item For each even $n\geq 0$ a module $W(n)$ of dimension $n+1$, such that $W(n)\otimes_\bR \bC$ is isomorphic to the module $V(n)$ for $\frsl_2(\bC)\simeq \frsu_2(\bR)\otimes_\bR\bC$. In particular, $W(0)$ is the trivial one-dimensional module for $\frsu_2(\bR)$, and $W(2)$ is the adjoint module.
\end{itemize}

The natural module $W=V(1)\simeq \bC^2$ is endowed with the natural hermitian form $h:W\times W\rightarrow \bC$ with
\[
h\left(\left(\begin{smallmatrix} x_1\\ x_2\end{smallmatrix}\right),
    \left(\begin{smallmatrix} y_1\\ y_2\end{smallmatrix}\right)\right)
    =x_1\overline{y_1}+x_2\overline{y_2},
\]
and hence we will identify $\frsu_2(\bR)\simeq \frsu(W,h)$. The unique, up to scalars, symmetric nondegenerate $\frsu_2(\bR)$-invariant bilinear form on $W$ is given by
\begin{equation}\label{eq:B_W}
B_W(u,v)=2\Re\bigl(h(u,v)\bigr)=h(u,v)+h(v,u),
\end{equation}
for $u,v\in W$. And the unique (up to scalars) $\frsu_2(\bR)$-invariant bilinear map $W\times W\rightarrow \frsu_2(\bR)\simeq \frsu(W,h)$ is given by
\begin{equation}\label{eq:Gamma}
\Gamma(u,v)=h(.,v)u-h(.,u)v,
\end{equation}
for $u,v\in W$. Note that for $s\in \frsu(V,h)$ and $u,v\in W$,
\begin{equation}\label{eq:Gamma_B_W}
\begin{split}
\trace\bigl(s\Gamma(u,v)\bigr)&=\trace\left(s\bigl(h(.,u)v-h(.,v)u\bigr)\right)\\
    &=\trace\left(h(.,v)s(u)-h(.,u)s(v)\right)\\
    &=h\bigl(s(u),v\bigr)-h\bigl(s(v),u\bigr)\\
    &=h\bigl(s(u),v\bigr)+h\bigl(v,s(u)\bigr)\\
    &=2\Re\bigl(h(s(u),v)\bigr)=B_W\bigl(s(u),v\bigr).
\end{split}
\end{equation}

The same arguments used in the proof of Proposition \ref{pr:01246} give the following result:

\begin{proposition}\label{pr:real01246}
Let $(\frg=\frs\oplus\frr,B)$ be a non solvable and non simple irreducible quadratic Lie algebra over $\bR$, where $\frs$ is a Levi subalgebra and $\frr$ is the solvable radical of $\frg$. Assume $\frr\ne \frr^\perp$. If $\dim\frg\leq 13$, then:
\begin{enumerate}

\item ${\frs}$ is either isomorphic to $\frsl_2(\bR)$ or to $\frsu_2(\bR)$;

\item  if $\frs$ is isomorphic to $\frsl_2(\bR)$ and $\fru$ is an irreducible submodule of the ${\frs}(\cong\frsl_2(\bR))$-module
${{\frr}/{{\frr}^\perp}}$, then $\fru$ is, up to isomorphism, ${V_\bR}(n)$ where $n\in
\{0,1,2,4,6\}$;

\item if $\frs$ is isomorphic to $\frsu_2(\bR)$ and $\fru$ is an irreducible submodule of the $\frs(\cong\frsu_2(\bR))$-module $\frr/\frr^\perp$, then, up to isomorphism, $\fru\in\{W(0),W(2),W(4),W(6),V(1)\}$.

\end{enumerate}
\end{proposition}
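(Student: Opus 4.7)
The plan is to mimic the proof of Proposition \ref{pr:01246}, with the necessary modifications to accommodate the two simple real forms of $\frsl_2(\bC)$ and the broader list of irreducible real modules. The main ingredients are the same: the dimension bound $\dim\frr^\perp=\dim\frs$ coming from Proposition \ref{L1}, the observation that each simple $\frs$-submodule of $\frr/\frr^\perp$ must either carry a nondegenerate symmetric invariant real bilinear form (induced by $B$) or be paired via $B$ with a dual copy, and a classification of small semisimple real Lie algebras and their real irreducible modules.

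First I would argue that $\dim\frs\leq 6$ exactly as in the complex case. Over $\bR$ the only semisimple Lie algebras of dimension $\leq 6$ are $\frsl_2(\bR)$, $\frsu_2(\bR)$, and the three possible orthogonal sums of two of these. The six-dimensional option is ruled out by the argument of Proposition \ref{pr:01246}: writing $\frs=\frs_1\oplus\frs_2$, Proposition \ref{L1} forces $\frr^\perp\cong\frs$ as an $\frs$-module, giving a decomposition $\frr=\frm_1\oplus\frm_2\oplus\frm_3$ with $\frm_i\cong\frs_i$ for $i=1,2$ and $\dim\frm_3\leq 1$; the trivial ideal $\frm_3$ is nondegenerate and hence zero, and then $\frs_i\oplus\frm_i$ is a nondegenerate proper ideal of $\frg$ (the Killing form of $\frs_i$ is nondegenerate for both real forms), contradicting irreducibility. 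Thus $\frs\cong\frsl_2(\bR)$ or $\frs\cong\frsu_2(\bR)$.

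Next, for either real form $\dim\frr/\frr^\perp\leq 13-2\dim\frs\leq 7$, and it remains to list which irreducible real $\frs$-modules can appear. For $\frs\cong\frsl_2(\bR)$ the irreducibles are $V_\bR(n)$ of real dimension $n+1$; by the same Clebsch--Gordan argument over $\bR$, $V_\bR(n)$ carries a symmetric invariant bilinear form when $n$ is even and only a skew one when $n$ is odd. The bound $\dim\leq 7$ gives $n\leq 6$, and $V_\bR(3),V_\bR(5)$ would have to appear paired with a dual copy of real dimension $8$ or $12$, exceeding the allowed $7$; this leaves $n\in\{0,1,2,4,6\}$. For $\frs\cong\frsu_2(\bR)$ the irreducible real modules listed before the proposition are $W(n)$ of dimension $n+1$ ($n$ even) and $V(n)$ of real dimension $2(n+1)$ ($n$ odd); the bound $\dim\leq 7$ immediately trims the list to $\{W(0),W(2),W(4),W(6),V(1)\}$.

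The only genuinely new point, compared with the complex case, is to verify that every module on this last list admits a nondegenerate symmetric $\frsu_2(\bR)$-invariant real bilinear form, so that no external pairing is needed. For each $W(n)$ with $n$ even this is immediate: the symmetric invariant form on the complexification $V(n)=W(n)\otimes_\bR\bC$ is defined over $\bR$ and restricts to the required form on $W(n)$. For $V(1)\cong\bC^2$ the form $B_W(u,v)=2\Re h(u,v)$ of \eqref{eq:B_W} does the job. I expect this verification to be the only place where the real case requires attention beyond a routine translation of the complex argument; the rest of the proof is a line-by-line adaptation.
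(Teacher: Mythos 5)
Your proposal follows the route the paper intends (the paper itself proves this proposition simply by invoking ``the same arguments as in Proposition~\ref{pr:01246}''), and your treatment of parts (2) and (3) --- the bound $\dim(\frr/\frr^\perp)\leq 13-2\dim\frs=7$, the pairing argument excluding $V_\bR(3)$ and $V_\bR(5)$, and the dimension count trimming the $\frsu_2(\bR)$-list --- is correct. But there is a genuine gap in your proof of part (1): your classification of semisimple real Lie algebras of dimension $\leq 6$ is incomplete. Besides $\frsl_2(\bR)$, $\frsu_2(\bR)$ and the three orthogonal sums of two of these, there is a fourth six-dimensional possibility, namely $\frsl_2(\bC)$ regarded as a \emph{real} Lie algebra, which is simple of dimension $6$. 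This omission is not vacuous: the case $\frs\cong\frsl_2(\bC)$ genuinely occurs among irreducible quadratic real Lie algebras of dimension $\leq 13$ --- it produces the trivial $T^\star$-extension $T_0^\star(\frsl_2(\bC))$ of dimension $12$, which appears in the paper's final real classification theorem --- and it is excluded here only by exploiting the hypothesis $\frr\ne\frr^\perp$, not by any list of small semisimple algebras. Your argument never addresses it, so part (1) is not established as written.

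The fix is short. If $\frs\cong\frsl_2(\bC)$, then $\dim\frs=6$ and Proposition~\ref{L1} gives $\dim\frr^\perp=6$, so $\dim\frg\leq 13$ forces $\dim(\frr/\frr^\perp)\leq 1$, and the hypothesis $\frr\ne\frr^\perp$ forces $\frr=\frr^\perp\oplus\frm_3$ with $\frm_3$ a one-dimensional (hence trivial) $\frs$-module. Since $\frr^\perp\subseteq\frz(\frr)$ and $\dim\frm_3=1$, the radical $\frr$ is abelian, so $\frm_3$ is an ideal of $\frg$; moreover $B(\frs,\frm_3)=B([\frs,\frs],\frm_3)=B(\frs,[\frs,\frm_3])=0$ and $B(\frr^\perp,\frm_3)\subseteq B(\frr^\perp,\frr)=0$, so nondegeneracy of $B$ on $\frg$ forces $B\vert_{\frm_3\times\frm_3}\ne 0$, i.e.\ $\frm_3$ is a nonzero proper nondegenerate ideal, contradicting irreducibility. (Alternatively: $\frs$ is simple and $\frr$ is abelian, so $[\frr,\frr]=0$ together with Proposition~\ref{pr:R_Rperp} and $\frr\ne\frr^\perp$ would give $\frr^\perp=0$, which is absurd.) With this case disposed of, your remaining arguments --- which are those of the paper, transplanted to $\bR$ --- complete the proof; your final verification that the modules $W(n)$ and $V(1)$ carry symmetric invariant forms is harmless but not actually needed for the statement, which only asserts a necessary condition on $\fru$.
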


In case (2) above the situation is close to what happens over algebraically closed fields. Denote by $\frd_\bR(4)$ and $\frd_6(\bR)$ the double extensions as in Example \ref{ex:Dm} but with $\bF=\bR$. On the other hand, the same construction applies for $\frsu_2(\bR)$ and the modules $W(m)$ for even $m$. The corresponding double extensions will be denoted by $\check\frd_\bR(m)$.

Then the proof of Proposition \ref{pr:46} can be easily modified to yield the following result.

\begin{proposition}\label{pr:real46}
Let $(\frg=\frs\oplus\frr,B)$ be a non solvable and non simple irreducible quadratic Lie algebra over $\bR$ of dimension $\leq 13$, where $\frs$ is a Levi subalgebra and $\frr$ is the solvable radical of $\frg$.
\begin{enumerate}

\item If ${\frs}$ is isomorphic to $\frsl_2(\bR)$ and the $\frs$-module $\frr/\frr^\perp$ contains a submodule isomorphic to $V_\bR(m)$, $m=4$ or $6$, then $\frg$ is isomorphic to $\frd(m)$.

\item If $\frs$ is isomorphic to $\frsu_2(\bR)$ and the $\frs$-module $\frr/\frr^\perp$ contains a submodule isomorphic to $W(m)$, $m=4$ or $6$, then $\frg$ is isomorphic to $\check\frd_\bR(m)$.

\end{enumerate}
\end{proposition}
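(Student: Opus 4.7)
My plan is to mimic the proof of Proposition \ref{pr:46} in both cases simultaneously, treating $\frs\cong\frsl_2(\bR)$ with modules $V_\bR(m)$ and $\frs\cong\frsu_2(\bR)$ with modules $W(m)$ in parallel. The key representation-theoretic facts carry over from the complex case: since $V_\bR(n)\otimes_\bR\bC\cong V(n)$ and $W(n)\otimes_\bR\bC\cong V(n)$ as modules for the respective complexifications, and since complexification commutes with exterior powers and preserves Hom spaces for split rational real forms of semisimple Lie algebras, we obtain the Clebsch--Gordan decompositions
\[
V_\bR(6)\wedge V_\bR(6)\cong V_\bR(10)\oplus V_\bR(6)\oplus V_\bR(2),\qquad W(6)\wedge W(6)\cong W(10)\oplus W(6)\oplus W(2),
\]
\[
V_\bR(4)\wedge V_\bR(4)\cong V_\bR(6)\oplus V_\bR(2),\qquad W(4)\wedge W(4)\cong W(6)\oplus W(2),
\]
with one-dimensional $\Hom_{\frs}$ from each exterior square into $V_\bR(2)$ or $W(2)$.

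\textbf{Case $m=6$:} Since $\dim\frs=3$, $\dim\frr^\perp=3$ and $\dim\fru=7$, dimension counting forces $\frr=\frr^\perp\oplus\fru$ with $\fru\cong V_\bR(6)$ or $W(6)$. Proposition \ref{pr:R_Rperp} applies (the ambient field is irrelevant), so $\frr^\perp=[\frr,\frr]$, $\frr/\frr^\perp$ is abelian, and $[\fru,\fru]=\frr^\perp$. The bracket $\fru\times\fru\to\frr^\perp$ is an $\frs$-invariant skew map into a module isomorphic to the adjoint module, so by the Clebsch--Gordan fact above it is uniquely determined up to a real scalar. Applying Remark \ref{re:double_extension} (scaling by any positive scalar gives an isomorphic double extension, and changing sign is absorbed by the $\frs$-equivariant endomorphism $u\mapsto-u$ on $\fru$, which preserves $T$), the full Lie algebra $\frg$ is determined up to isomorphism, hence $\frg\cong\frd_\bR(6)$ or $\check\frd_\bR(6)$.

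\textbf{Case $m=4$:} Dimension counting leaves room for a complement $\frv$ of $\frr^\perp\oplus\fru$ in $\frr$ with $\dim\frv\leq2$. For $\frs=\frsl_2(\bR)$, the summand $\frv$ cannot contain a copy of $V_\bR(1)$: the restriction of $B$ to $\frv$ would have to be nondegenerate symmetric by orthogonality to the other pieces (there is no invariant pairing between $V_\bR(1)$ and either $V_\bR(2)$ or $V_\bR(4)$), but $V_\bR(1)$ only carries a skew invariant form. For $\frs=\frsu_2(\bR)$ the only irreducible module of real dimension $\leq 2$ is $W(0)$, so automatically $\frv$ is trivial. The exterior-square decompositions give $[\fru,\fru]\subseteq\frr^\perp$, and $\frs$-invariance plus $\frr^\perp\subseteq\frz(\frr)$ yield that $\frs\oplus\fru\oplus\frr^\perp$ is an ideal; it is nondegenerate because the pairings $\frs\leftrightarrow\frr^\perp$ and $\fru\leftrightarrow\fru$ are nondegenerate and all other crossed invariant pairings between the summands and $\frv$ vanish by Schur. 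Irreducibility of $\frg$ therefore forces $\frv=0$. The bracket $\fru\times\fru\to\frr^\perp$ is again determined up to scalar, so $\frg\cong\frd_\bR(4)$ or $\check\frd_\bR(4)$.

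The main obstacle is checking that the real representation theory really gives one-dimensional spaces of invariant maps (i.e.\ that no new invariants appear over $\bR$ from non-split behavior). For $\frsl_2(\bR)$ this is immediate from split-ness, and for $\frsu_2(\bR)$ it follows from the fact that every real form of $V(2k)$ occurring here is the unique real form $W(2k)$, so complexification is an $\bR$-linear isomorphism on Hom spaces of the relevant even-weight modules. A minor subtlety over $\bR$ is that the sign of the symmetric form $T$ on $V_\bR(m)$ or $W(m)$ cannot be changed by a positive scalar, but scaling $\fru$ by an $\frs$-equivariant scalar multiplication by $-1$ rescales the bracket $[\fru,\fru]$ into its negative, absorbing the sign ambiguity and confirming that both sign choices yield isomorphic double extensions.
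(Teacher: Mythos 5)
Your overall route is exactly the paper's intended one (the paper's own ``proof'' is just the remark that the proof of Proposition~\ref{pr:46} can be easily modified, and your modifications --- real Clebsch--Gordan decompositions obtained by complexifying, absolute irreducibility of the $W(2k)$ so that the relevant $\Hom$ spaces stay one-dimensional over $\bR$, and the exclusion of $V_\bR(1)$ resp.\ of everything but $W(0)$ from the complement $\frv$ in the $m=4$ case --- are the right ones). However, one step is genuinely wrong: the sign-absorption mechanism. You claim, twice, that the $\frs$-equivariant map $u\mapsto -u$ on $\fru$ ``rescales the bracket $[\fru,\fru]$ into its negative.'' It does not: the bracket is bilinear, so $[-u,-v]=[u,v]$; scaling $\fru$ by $\alpha$ multiplies the bracket by $\alpha^{2}$, and over $\bR$ only \emph{positive} scalars can be absorbed this way. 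As written, your argument leaves two potentially distinct algebras in each case, corresponding to $[u,v]=\pm\psi(u,v)$ (equivalently $T$ versus $-T$), and this is precisely the kind of sign phenomenon that genuinely matters elsewhere in the real classification --- compare Lemma~\ref{le:gepsilon}, where the analogous sign $\epsilon=\pm1$ produces the non-isomorphic algebras $\check\frg^{1}$ and $\check\frg^{-1}$. So the sign cannot be waved away by an equivariant symmetry of $\fru$ alone.

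Fortunately the gap is closed by the very remark you cite, read correctly: the \emph{first} paragraph of Remark~\ref{re:double_extension} gives a Lie algebra isomorphism $T_\varphi(\frg_1,B_1,\frg_2)\cong T_\varphi(\frg_1,\alpha B_1,\frg_2)$, via $x_2+x_1+f\mapsto x_2+x_1+\alpha f$, for \emph{any} $\alpha\neq0$ and over any field; it is only the second (``partial scaling'') paragraph that is restricted to squares, hence to positive reals. Concretely, once you know $[u,v]=\lambda\,\psi(u,v)$ for some $0\neq\lambda\in\bR$, the linear map fixing $\frs\oplus\fru$ and multiplying the central component $\frr^\perp$ by $\lambda^{-1}$ is a Lie algebra isomorphism onto the model with $\lambda=1$ (it is compatible with $[\frs,\frr^\perp]\subseteq\frr^\perp$ and with $[\fru,\fru]\subseteq\frr^\perp$, and $\frr^\perp$ is central in $\frr$), i.e.\ onto $\frd_\bR(m)$ or $\check\frd_\bR(m)$ as appropriate. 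With that substitution in place of the $u\mapsto-u$ argument, your proof is complete and agrees with the paper's intended modification of Proposition~\ref{pr:46}.
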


Therefore, as in Section \ref{se:closed}, assuming $\dim\frg\leq 13$, we are left with the situation in which our quadratic Lie algebra $\frg$ is, as a module for its Levi subalgebra $\frs$ (isomorphic to either $\frsl_2(\bR)$ or $\frsu_2(\bR)$),  a direct sum of copies of the adjoint module, the natural module  and the trivial one-dimensional module. For $\frs\cong\frsl_2(\bR)$, we may identify $\frs$ with $\frsl(V)$, with $V$ the natural module, and equation \eqref{eq:gslVVd} remains valid here. On the other hand, for $\frs\cong\frsu_2(\bR)$, if $W$ is the natural module for $\frsu_2(\bR)\cong\frsu(W,h)$, by dimension count we get one of the following analogues of \eqref{eq:gslVVd}
\begin{equation}\label{eq:gsuWd}
\frg=\bigl(\frsu(W,h)\otimes\calJ\bigr)\oplus\frd,
\end{equation}
or
\begin{equation}\label{eq:gsuWWd}
\frg=\bigl(\frsu(W,h)\otimes\calJ\bigr)\oplus W\oplus \frd.
\end{equation}
(Since the dimension of $W$ is $4$, at most one copy of $W$ may appear in any decomposition of $\frg$ into a direct sum of irreducible module for the Levi subalgebra $\frs\cong\frsu(W,h)$.)

\begin{proposition}\label{pr:real_dimJ_4}
Let $({\frg}= \frs\oplus {\frr},B)$ be a non solvable and non simple irreducible quadratic Lie algebra over $\bR$ of dimension $\leq 13$, where $\frs$ is isomorphic either to $\frsl_2(\bR)$ or to $\frsu_2(\bR)$, and $\frr$ is the solvable radical of $\frg$.

If, as a module for $\frs$, $\frg$ contains four copies of the adjoint module, then $\frg$ is isomorphic to $\frs\otimes \calA$, where $\calA$ is a unital commutative associative algebra isomorphic to either $\bR[X]/(X^4)$, $\bR[X,Y]/(X^2,Y^2)$, or $\bR[X,Y]/(X^3,Y^3,X^2-Y^2)$, with the usual bracket $[x\otimes a,y\otimes b]=[x,y]\otimes ab$ for $x,y\in \frs$ and  $a,b\in\calA$.
\end{proposition}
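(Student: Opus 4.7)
The plan is to mimic the proof of Proposition \ref{pr:dimJ_4}, treating both Levi subalgebras in parallel and isolating the only genuinely new point: the finer classification, over $\bR$, of certain $4$-dimensional commutative associative algebras.

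First I would establish a uniform decomposition for $\frg$. For $\frs\cong\frsl_2(\bR)$, \eqref{eq:gslVVd} applies verbatim and the hypothesis forces $\dim\calJ=4$, $\calT=0$ and $\dim\frd\le 1$. For $\frs\cong\frsu_2(\bR)$, using $W(2)\otimes W(2)\cong W(4)\oplus W(2)\oplus W(0)$, the space of skew-symmetric $\frsu_2(\bR)$-invariant maps $\frsu(W,h)\otimes\frsu(W,h)\to\frsu(W,h)$ is one-dimensional (spanned by the bracket) while the symmetric ones into the trivial module are spanned by $(s,t)\mapsto\trace(st)$. Exactly as in Proposition \ref{pr:conditions_g}, one derives
\[
[s\otimes a,t\otimes b]=[s,t]\otimes(ab)+2\trace(st)D_{a,b},\qquad B(s\otimes a,t\otimes b)=\trace(st)B_\calJ(a,b),
\]
where $(a,b)\mapsto ab$ is symmetric on $\calJ$, $D_{\cdot,\cdot}$ is skew-symmetric valued in $\frd$, and $B_\calJ$ is a symmetric nondegenerate associative form on $\calJ$; the Jacobi identity yields that $(\calJ,\cdot)$ is a unital Jordan algebra with $D_{1,\cdot}=0$ and $D_{a,b}(c)=a(bc)-b(ac)$. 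Again $\dim\calJ=4$ and $\dim\frd\le 1$.

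With this uniform setup I would repeat the reduction from Proposition \ref{pr:dimJ_4} verbatim: the nilpotent radical $\calJ_0$ contains no nonzero idempotent (else $\frs\otimes e$ would sit inside $\frr$ as a copy of $\frs$), so $\calJ$ is unital commutative associative and $D_{a,b}(\calJ)=0$; if some $D_{a,b}$ were nonzero then $\frd=\bR D_{a,b}$ would be a nondegenerate central ideal, contradicting irreducibility. Hence $D_{\calJ,\calJ}=0$ and $\frg\cong\frs\otimes\calJ$, and the problem reduces to classifying $4$-dimensional unital commutative associative $\bR$-algebras $\calJ$ admitting a nondegenerate symmetric associative bilinear form $B_\calJ$. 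As in the algebraically closed case, $\calJ_0^4=0$; when $\calJ_0^3\ne 0$, $\calJ_0$ is generated by a single element $a$ with $a^4=0$, giving $\calJ\cong\bR[X]/(X^4)$; when $\calJ_0^3=0$, the same $B_\calJ$-nondegeneracy argument as in Proposition \ref{pr:dimJ_4} forces the symmetric bilinear map
\[
\bigl(\calJ_0/\calJ_0^2\bigr)\times\bigl(\calJ_0/\calJ_0^2\bigr)\longrightarrow\calJ_0^2,\qquad(\bar a,\bar b)\mapsto ab,
\]
to be nondegenerate, with $\dim\calJ_0^2=1$.

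The main obstacle, and the only genuinely new point over $\bR$, is that a nondegenerate symmetric $\bR$-bilinear form in two variables has \emph{two} isometry classes up to rescaling of the target $\calJ_0^2$: hyperbolic and definite. In the hyperbolic case one finds $a,b\in\calJ_0\setminus\calJ_0^2$ with $a^2=b^2=0$ and $ab\ne 0$, giving $\calJ\cong\bR[X,Y]/(X^2,Y^2)$. In the definite case one orthogonalises to obtain $a,b\in\calJ_0\setminus\calJ_0^2$ with $ab=0$ and $a^2=\pm b^2\ne 0$; since the sign of $a^2/b^2$ can be absorbed by rescaling the generator of $\calJ_0^2$ (the relabelling $a\leftrightarrow a$, $b\leftrightarrow b$, $c\leftrightarrow -c$ identifies the positive and negative definite sub-cases), a unique further isomorphism class survives, namely the third algebra listed in the statement. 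Finally, one checks that each of the three algebras really admits a nondegenerate symmetric associative bilinear form, so that each yields an irreducible quadratic $\frg\cong\frs\otimes\calJ$, exactly as in the remark after Proposition \ref{pr:dimJ_4}.
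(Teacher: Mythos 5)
Your proof is correct and follows essentially the same route as the paper's: the only (cosmetic) difference is that the paper obtains the structure equations and the conclusions of Lemma \ref{le:J_T} by extending scalars to $\bC$ and citing Proposition \ref{pr:conditions_g}, whereas you rederive them directly over $\bR$ from the one-dimensionality of the relevant $\frsu_2(\bR)$-invariant Hom spaces, which amounts to the same thing since these dimensions are unchanged under complexification. Your treatment of the one genuinely new point also coincides with the paper's: over $\bR$ the rank-two symmetric map $\bigl(\calJ_0/\calJ_0^2\bigr)\times\bigl(\calJ_0/\calJ_0^2\bigr)\rightarrow\calJ_0^2$ may, besides the hyperbolic case, be definite (the signatures $(2,0)$ and $(0,2)$ being identified by replacing $c$ with $-c$), yielding elements $a,b$ with $a^2=b^2=c\ne 0$, $ab=0$, and hence the third algebra in the list.
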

\begin{proof}
Extending scalars to $\bC$, we obtain that the results in Proposition \ref{pr:conditions_g} hold. Then so do the arguments in Lemma \ref{le:J_T}, which are valid no matter whether $\frs\cong\frsl_2(\bR)$ or $\frs\cong\frsu_2(\bR)$. Hence the proof of Proposition \ref{pr:dimJ_4} works with one exception. In the situation in which $\calJ_0^2=\bR c$ is one-dimensional, the bilinear map $\bigl(\calJ_0/\calJ_0^2\bigr)\times\bigl(\calJ_0/\calJ_0^2\bigr)\rightarrow \calJ_0^2$ given by $(a+\calJ_0^2,b+\calJ_0^2)\mapsto ab$ is bilinear, symmetric and nonzero. Then either its rank is $1$, and we get a contradiction as in the proof of Proposition \ref{pr:dimJ_4}, or its rank is $2$ and the signature is $(1,1)$, and we argue as in the proof of Proposition \ref{pr:dimJ_4}, or its rank is $2$ and the signature is $(2,0)$ (or $(0,2)$). In this `new' case there are elements $a,b\in\calJ_0\setminus\calJ_0^2$ with $a^2=b^2\ne 0$ and $ab=0$. Then we may take $c=a^2=b^2$, and $\calJ$ is isomorphic to $\bR[X,Y]/(X^3,Y^3,X^2-Y^2)$. Besides, the bilinear form $B_\calJ$ satisfies $B_\calJ(a,b)=B_\calJ(1,ab)=0$. Also, $B_\calJ(a,a)=B_\calJ(1,a^2)=B_\calJ(1,c)=B_\calJ(b,b)$. That such a nondegenerate associative symmetric bilinear form exists is clear.
\end{proof}

Example \ref{ex:V2V1V1} works over $\bR$. We need another similar example for $\frsu_2(\bR)$.

\begin{example}\label{ex:real_W2V1}
Let $W$ be the natural module (dimension $4$) for the Lie algebra $\frsu_2(\bR)\cong\frsu(W,h)$, endowed with the symmetric nondegenerate bilinear form $B_W$ in \eqref{eq:B_W}. On the vector space $\check\fra=\frsu(W,h)\oplus W$ consider the symmetric nondegenerate bilinear form $B_{\check\fra}^\epsilon$, where $\epsilon$ is either $1$ or $-1$, such that $B_{\check\fra}^\epsilon\bigl(\frsu(W,h),W\bigr)=0$, $B_{\check\fra}^\epsilon(x,y)=\trace(xy)$ and $B_{\check\fra}^\epsilon(u,v)=\epsilon B_W(u,v)$, for $x,y\in\frsu(W,h)$ and $u,v\in W$. Consider $\check\fra$ as an abelian Lie algebra.

The linear map $\varphi:\frsu(W,h)\rightarrow \Der(\check\fra,B_{\check\fra}^\epsilon)$ given by $\varphi(x)(y)=[x,y]$ and $\varphi(x)(u)=x(u)$, for $x,y\in\frsu(W,h)$ and $u\in W$, is a Lie algebra homomorphism. The double extension $\check\frg^\epsilon=T_\varphi\bigl(\check\fra,B_{\check\fra}^\epsilon,\frsu(W,h)\bigr)$ is an irreducible quadratic Lie algebra, with a Levi subalgebra isomorphic to $\frsu_2(\bR)$ and such that, as a module for this subalgebra, it decomposes as the direct sum of three copies of the adjoint module, and one copy of the natural four-dimensional module. Its dimension is $13$. \qed
\end{example}

Alternatively, the algebra $\check\frg^\epsilon$ in Example \ref{ex:real_W2V1} can be defined on the vector space:
\[
\tilde\frg^\epsilon=\bigl(\frsu(W,h)\otimes_\bR\calA\bigr)\oplus W,
\]
with $\calA=\bR[X]/(X^3)=\bR[a]$, with $a^3=0$ ($a$ is the class of $X$ modulo $(X^3)$) and product determined by the natural Lie bracket in $\frsu(W,h)\otimes_\bR\calA$ (a scalar extension of $\frsu(W,h)$) and by
\[
\begin{split}
&[\frsu(W,h)\otimes_\bR\calA_0,W]=0,\\
 &\qquad\qquad\text{($\calA_0=\bR a+\bR a^2$ is the nilpotent radical of $\calA$)}\\[2pt]
&[x\otimes 1,w]=x(w),\\[2pt]
&[u,v]=\epsilon\Gamma(u,v)\otimes a^2,
\end{split}
\]
for $x\in\frsu(W,h)$ and $u,v,w\in W$. Indeed, the following linear map
\[
\begin{split}
\check\frg^\epsilon\qquad&\longrightarrow\qquad \tilde\frg^\epsilon\\
x+(y+w)+f&\mapsto \bigl(x\otimes 1+y\otimes a+z\otimes a^2\bigr)+w,
\end{split}
\]
for $x,y\in\frsu(W,h)$, $w\in W$ and $f\in\frsu(W,h)^*$, where $z\in\frsu(W,h)$ is such that $\trace(zs)=f(s)$ for any $s\in\frsu(W,h)$, is an isomorphism. The proof is straightforward using \eqref{eq:Gamma_B_W}.

\begin{lemma}\label{le:gepsilon}
The Lie algebras $\check\frg^1$ and $\check\frg^{-1}$ in Example \ref{ex:real_W2V1} are not isomorphic.
\end{lemma}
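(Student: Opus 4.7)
The plan is to assume for contradiction that $\Phi:\check\frg^1\to\check\frg^{-1}$ is a Lie algebra isomorphism and derive an obstruction. I would work with the alternative description $\tilde\frg^\epsilon=\bigl(\frsu(W,h)\otimes_\bR\calA\bigr)\oplus W$ with $\calA=\bR[X]/(X^3)=\bR[a]$, $a^3=0$, given just before the statement. The first step is to reduce to the case $\Phi|_{\frsu(W,h)\otimes 1}=\id$: since all Levi subalgebras are conjugate via inner automorphisms of $\tilde\frg^{-1}$, and since $\Aut(\frsu_2(\bR))\simeq SO(3)$ is inner, both effects can be absorbed by composing $\Phi$ with inner automorphisms of the target obtained from $\exp(\ad x)$ with $x\in\frsu(W,h)\otimes 1$.

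Now $\Phi$ is $\frsu(W,h)$-equivariant, so it preserves the isotypic decomposition: the adjoint-module component is $\frsu(W,h)\otimes\calA$, the natural-module component is $W$. By Schur's lemma over $\bR$, $\End_{\frsu_2(\bR)}^\bR(\frsu(W,h))=\bR$, so $\Phi|_{\frsu(W,h)\otimes\calA}=\id\otimes T$ for some $\bR$-linear $T:\calA\to\calA$; and since $W$ is a quaternionic-type irreducible real representation, $\End_{\frsu_2(\bR)}^\bR(W)\simeq\bH$, so $\Phi|_W=q$ for some $q\in\bH^*$. Compatibility with $[x\otimes a,y\otimes b]=[x,y]\otimes ab$ forces $T$ to be an algebra automorphism of $\calA$ with $T(1)=1$, so $T(a)=\alpha a+\beta a^2$ for some $\alpha\in\bR^*$ and $T(a^2)=\alpha^2 a^2$. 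The bracket $[x\otimes 1,w]=x(w)$ is automatically compatible because $q$ commutes with the $\frsu(W,h)$-action.

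The only remaining bracket to test is $[u,v]=\epsilon\Gamma(u,v)\otimes a^2$. Applying $\Phi$ and equating gives
\[
\alpha^2\,\Gamma(u,v)=-\Gamma\bigl(q(u),q(v)\bigr)\qquad\text{for all }u,v\in W.
\]
Since $\Hom_{\frsu_2(\bR)}\bigl(\wedge^2_\bR W,\frsu(W,h)\bigr)$ is one-dimensional over $\bR$ (as one checks by complexifying: $W\otimes_\bR\bC\simeq V(1)\oplus V(1)$, and then counting the $V(2)$-isotypic piece of $\wedge^2$), the map $(u,v)\mapsto\Gamma(q(u),q(v))$ must equal $\lambda(q)\,\Gamma(u,v)$ for a scalar $\lambda(q)\in\bR$. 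The equation above then reads $\alpha^2=-\lambda(q)$.

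The crux — and the main obstacle — is to establish that $\lambda(q)>0$ for every $q\in\bH^*$. I would avoid an explicit quaternionic computation and argue as follows: $\lambda:\bH\to\bR$ is continuous with $\lambda(1)=1>0$; if $\lambda(q_0)=0$ for some $q_0\ne 0$, then $\Gamma(q_0 u,q_0 v)=0$ for all $u,v\in W$, but $q_0$ is invertible in the division algebra $\bH$, so $q_0(W)=W$, forcing $\Gamma\equiv 0$, a contradiction with $\Gamma$ being a generator of the one-dimensional space $\Hom_{\frsu_2(\bR)}(\wedge^2 W,\frsu(W,h))\ne 0$. Hence $\lambda$ is nowhere zero on the connected set $\bH^*\simeq\bR^4\setminus\{0\}$, and continuity with $\lambda(1)=1$ yields $\lambda>0$ on $\bH^*$. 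But then $\alpha^2=-\lambda(q)<0$ for $\alpha\in\bR^*$ is impossible, giving the desired contradiction and proving $\check\frg^1\not\simeq\check\frg^{-1}$.
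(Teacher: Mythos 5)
Your proof is correct, but it takes a genuinely different route from the paper. The paper never analyzes a hypothetical isomorphism: instead it shows that \emph{every} symmetric nondegenerate invariant bilinear form on $\tilde\frg^\epsilon$ restricts as $\mu B_W$ on $W$ and as $\trace(xy)B_\calA(b,c)$ on $\frsu(W,h)\otimes\calA$, derives the relation $\epsilon\alpha=\mu$ with $\alpha=B_\calA(1,a^2)\neq 0$, and then computes that the possible signatures are $(7,6)$ or $(6,7)$ for $\epsilon=1$ but $(3,10)$ or $(10,3)$ for $\epsilon=-1$; since an isomorphism transports invariant forms, the disjointness of these signature sets finishes the proof. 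You instead normalize a hypothetical isomorphism $\Phi$ via Malcev conjugacy of Levi subalgebras and innerness of $\Aut(\frsu_2(\bR))$, invoke real Schur's lemma ($\End_{\frsu_2}$ is $\bR$ on the adjoint module and $\bH$ on $W$) to write $\Phi=\id\otimes T$ plus $q\in\bH^\ast$, and reduce everything to the sign equation $\alpha^2=-\lambda(q)$, killed by $\lambda>0$. Each step checks out: the one-dimensionality of $\Hom_{\frsu_2}(\wedge^2_\bR W,\frsu(W,h))$ follows from your complexification count $\wedge^2(V(1)\oplus V(1))\cong V(2)\oplus 3V(0)$, your continuity claim is even better than needed ($\lambda$ is a quadratic polynomial in $q$; in fact $\lambda(q)=\lvert q\rvert^2$ is the quaternion norm form, so positivity is also immediate without the connectedness argument), and the division-algebra step ruling out zeros of $\lambda$ is sound. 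What each approach buys: the paper's argument is shorter given the formulas already in hand, produces the extra invariant (the exact signatures each algebra supports), and sidesteps any discussion of automorphisms; yours is a rigidity computation that determines the shape of every would-be isomorphism, making the obstruction visible as a pure sign, and it generalizes to situations where one does not want to (or cannot) classify the invariant forms first. One cosmetic caveat you inherit from the paper: $\Gamma(u,v)$ as written has trace $h(u,v)-h(v,u)$, so strictly one should use its traceless projection to land in $\frsu(W,h)$; your argument is insensitive to this, since the scalar $\lambda(q)$ and the bracket only involve the traceless part.
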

\begin{proof}
We must prove that $\tilde\frg^1$ and $\tilde\frg^{-1}$ are not isomorphic. To do that, we will prove that the signature of any symmetric nondegenerate invariant bilinear form on $\tilde\frg^1$ is always different from the signature of any such form on $\tilde\frg^{-1}$.

Let $B$ be any symmetric nondegenerate invariant bilinear form on $\tilde\frg^\epsilon$ ($\epsilon=\pm 1$). Since $B$ is $\frsu(W,h)\otimes 1$-invariant, the subspaces $\frsu(W,h)\otimes_\bR\calA$ and $W$ are orthogonal relative to $B$ and also we get:
\begin{itemize}
\item The restriction of $B$ to $W$ is a scalar multiple of $B_W$: $B\vert_W=\mu B_W$, for some $0\ne \mu\in\bR$.
\item There is an associative symmetric nondegenerate bilinear form $B_\calA$ on $\calA$ such that for any $x,y\in \frsu(W,h)$ and $b,c\in\calA$,
    \[
    B(x\otimes b,y\otimes c)=\trace(xy)B_\calA(b,c).
    \]
\end{itemize}
Write $B_\calA(1,a^2)=\alpha\,(\in\bR)$. Since $B_\calA$ is associative, we have $B_\calA(a,a)=B_\calA(1,a^2)=\alpha$ and $B_\calA(a,a^2)=B_\calA(1,a^3)=0$, $B_\calA(a^2,a^2)=B_\calA(1,a^4)=0$. Hence $\alpha\ne 0$, as $B_\calA$ is nondegenerate. Hence the signature of $B_\calA$ is $(1,2)$ if $\alpha>0$ and $(2,1)$ if $\alpha <0$.

For $x\in\frsu(W,h)$ and $u,v\in W$, $B(x\otimes 1,[u,v])=B(x(u),v)$, but
\[
\begin{split}
B(x\otimes 1,[u,v])&=B(x\otimes 1,\epsilon\Gamma(u,v)\otimes a^2)
    =\epsilon\trace(x\Gamma(u,v))\alpha\\
    &=\epsilon\alpha B_W(x(u),v)\quad \text{(see \eqref{eq:Gamma_B_W})}\\[4pt]
B([x\otimes 1,u],v)&=B(x(u),v)=\mu B_W(x(u),v),
\end{split}
\]
so $\epsilon\alpha=\mu$. Note that the bilinear forms $\trace(xy)$ on $\frsu(W,h)$ and $B_W$ on $W$ are positive definite. Hence, if $\epsilon=1$, the signature of $B$ is $(7,6)$ if $\alpha>0$ and $(6,7)$ if $\alpha <0$; while if $\epsilon=-1$, the signature of $B$ is $(3,10)$ if $\alpha>0$ and $(10,3)$ if $\alpha<0$.
\end{proof}

The result corresponding in the real case to Proposition \ref{pr:dimJ_3} is the following:

\begin{proposition}\label{pr:real_dimJ_3}
Let $({\frg}= \frs\oplus {\frr},B)$ be a non solvable and non simple irreducible quadratic Lie algebra over $\bR$ of dimension $\leq 13$, where $\frs$ is isomorphic either to $\frsl_2(\bR)$ or to $\frsu_2(\bR)$, and $\frr$ is the solvable radical of $\frg$.

If, as a module for $\frs$, $\frg$ contains three copies of the adjoint module, then $\frg$ is either isomorphic to $\frs\otimes \calA$, where $\calA=\bR[X]/(X^3)$ or one of the following possibilities occur:
\begin{itemize}
\item $\frs$ is isomorphic to $\frsl_2(\bR)$ and $\frg$ is isomorphic to the Lie algebra constructed in Example \ref{ex:V2V1V1} over $\bR$.
\item $\frs$ is isomorphic to $\frsu_2(\bR)$ and $\frg$ is isomorphic to either $\check\frg^1$ or $\check\frg^{-1}$, defined in Example \ref{ex:real_W2V1}.
\end{itemize}
\end{proposition}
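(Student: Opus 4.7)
The plan is to adapt the proof of Proposition \ref{pr:dimJ_3} to each of the two possibilities for the Levi subalgebra, exactly as the proof of Proposition \ref{pr:real_dimJ_4} adapts that of Proposition \ref{pr:dimJ_4}. Extending scalars to $\bC$ transports Proposition \ref{pr:conditions_g} and Lemma \ref{le:J_T} to the real setting, so we may work with \eqref{eq:gslVVd} when $\frs\cong\frsl_2(\bR)$, or with the analogues \eqref{eq:gsuWd}/\eqref{eq:gsuWWd} when $\frs\cong\frsu(W,h)$, and then analyze cases according to which irreducible $\frs$-submodules actually appear.

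If $\frs\cong\frsl_2(\bR)$, then $\calJ$ is a three-dimensional unital commutative associative algebra with nondegenerate associative symmetric form $B_\calJ$ and nilpotent radical $\calJ_0$ satisfying $\calJ_0^2\ne0$ (else $B_\calJ(\calJ_0,\calJ_0)=B_\calJ(1,\calJ_0^2)=0$, contradicting non-degeneracy). Hence $\calJ\cong\bR[X]/(X^3)$, and the argument of Proposition \ref{pr:dimJ_3} applies verbatim over $\bR$: either $\calT=0$ and $\frg\cong\frsl_2(\bR)\otimes\bR[X]/(X^3)$, or $\dim\calT=2$ and $\frd=0$, in which case the Lie bracket is determined and $\frg$ is isomorphic to the algebra built in Example \ref{ex:V2V1V1} over $\bR$. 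No real-versus-complex dichotomy arises, because the $\frs$-invariant form $B_\calT$ is skew-symmetric nondegenerate and admits a symplectic basis over any field.

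If $\frs\cong\frsu(W,h)$, the same scalar-extension argument yields $\calJ\cong\bR[X]/(X^3)$. If $W$ is absent (setup \eqref{eq:gsuWd}), the proof that $D_{\calJ,\calJ}=0$ is unaffected, $\frsu(W,h)\otimes\calJ$ is a nondegenerate ideal, irreducibility forces $\frd=0$, and $\frg\cong\frsu_2(\bR)\otimes\bR[X]/(X^3)$. If $W$ appears (setup \eqref{eq:gsuWWd}), dimension count forces $\dim\frd=0$ and $\dim\frg=13$; from $\frr^\perp\subseteq\frz(\frr)$ one deduces $\calJ_0\bullet W=0$, $[\frsu(W,h)\otimes\calJ_0,W]=0$, and $[W,W]\subseteq\frsu(W,h)\otimes c$, where $c$ spans $\calJ_0^2$. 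Since $\Hom_{\frsu(W,h)}(W\wedge W,\frsu(W,h))$ is one-dimensional and spanned by $\Gamma$ of \eqref{eq:Gamma}, there exists $\lambda\in\bR$ with $[u,v]=\lambda\,\Gamma(u,v)\otimes c$ for all $u,v\in W$, and $\lambda\ne0$ (otherwise $W$ would be a nondegenerate abelian ideal, violating irreducibility).

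The main obstacle is the final normalization of $\lambda$. Available rescalings of the generator $a$ of $\calJ_0$ and of the vector $u\in W$ only modify $\lambda$ by positive real squares, while rescaling the invariant form itself is absorbed by the corresponding rescaling of structure constants. Thus $\epsilon:=\mathrm{sign}(\lambda)$ is a genuine isomorphism invariant; matching $\epsilon=\pm1$ with the construction of Example \ref{ex:real_W2V1} identifies $\frg$ with $\check\frg^\epsilon$, and Lemma \ref{le:gepsilon} confirms that $\check\frg^1\not\cong\check\frg^{-1}$, so both possibilities genuinely occur. This sign issue does not arise in the $\frsl_2(\bR)$ case, because there the relevant $\frs$-invariant forms are either skew (hence hyperbolic over $\bR$) or indefinite, so any nonzero real scalar can be absorbed.
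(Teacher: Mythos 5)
Your proposal is correct and follows essentially the same route as the paper: the $\frsl_2(\bR)$ case is handled verbatim by the argument of Proposition \ref{pr:dimJ_3}, and in the $\frsu_2(\bR)$ case scalar extension yields $\calJ\cong\bR[X]/(X^3)$, $\frd=0$ and $[\frsu(W,h)\otimes\calJ_0,W]=0$, after which the one-dimensionality of $\Hom_{\frsu(W,h)}(W\wedge W,\frsu(W,h))=\bR\,\Gamma$ reduces the bracket to $[u,v]=\epsilon\,\Gamma(u,v)\otimes a^2$ with $\epsilon=\pm1$, identified with $\check\frg^{\pm1}$ and separated by Lemma \ref{le:gepsilon}. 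Your explicit observation that all available rescalings (of $a$, of $W$, including complex scalings, since $\Gamma(zu,zv)=\lvert z\rvert^2\Gamma(u,v)$) change $\lambda$ only by positive factors merely makes explicit what the paper compresses into ``we may scale and assume $\epsilon=\pm1$,'' so there is no gap.
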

\begin{proof}
In case $\frs$ is isomorphic to $\frsl_2(\bR)$, the arguments in the proof of Proposition \ref{pr:dimJ_3} work. Otherwise, $\frs$ is isomorphic to $\frsu_2(\bR)$. Extending scalars one gets, as in the proof of Proposition \ref{pr:dimJ_3} that $\frg$ can be written as in equations \eqref{eq:gsuWd} or \eqref{eq:gsuWWd} with $\calJ$ isomorphic to $\bR[X]/(X^3)=\bR[a]$, $\frd=0$, and with $[\frsu(W,h)\otimes\calJ_0,W]=0$ in the second case. Besides, since the only $\frsu(W,h)$-invariant bilinear form $W\times W\rightarrow \frsu(W,h)$ is, up to scalars, the bilinear map $\Gamma$ in \eqref{eq:Gamma}, we may scale and assume that in \eqref{eq:gsuWWd}, $[u,v]=\epsilon\Gamma(u,v)\otimes a^2$ for any $u,v\in W$, with $\epsilon=\pm 1$. Hence we get precisely the algebras $\tilde\frg^\epsilon\cong\check\frg^\epsilon$, $\epsilon=\pm 1$. This completes the proof.
\end{proof}

Again, Examples \ref{ex:V1V1}, \ref{ex:V1V1ddiagonal}, \ref{ex:V1V1dnilpotent} and \ref{ex:V1V1d} make sense over $\bR$ in the split case. But we need more examples.

\begin{example}\label{ex:real_W}
Let $W$ be the natural (four-dimensional) module for $\frsu_2(\bR)\cong\frsu(W,h)$, endowed with the invariant bilinear form $B_W$ in \eqref{eq:B_W}. Consider $W$ as an abelian Lie algebra. The linear map $\varphi:\frsu(W,h)\rightarrow \Der(W,B_W)$ given by the natural action: $\varphi(x)(w)=x(w)$ for any $x\in\frsu(W,h)$, $w\in W$, is a Lie algebra homomorphism. The double extension $T_\varphi(W,B_W,\frsu(W,h))$ is an irreducible quadratic Lie algebra, with a Levi subalgebra isomorphic to $\frsu(W,h)$ and such that, as a module for this subalgebra, it decomposes as the direct sum of two copies of the adjoint module and one copy of the natural module. Its dimension is $10$. \qed
\end{example}

\begin{example}\label{ex:real_dnotdiagonal}
Let $V$ be a two-dimensional real vector space endowed with a nondegenerate skew-symmetric bilinear form $(.\vert .)$.  Consider the abelian Lie algebra $\frb=V\otimes V$ with bilinear form $B_\frb$ as in Example \ref{ex:V1V1}. Fix a basis $\{u,v\}$ of $V$ with $(u\vert v)=1$, and the one-dimensional Lie algebra $\bF d$. Let $\varphi:\bF d\rightarrow \Der(\frb,B_\frb)$ be the Lie algebra homomorphism such that $\varphi(d)(u_1\otimes u)=u_1\otimes v$, $\varphi(d)(u_1\otimes v)=-u_1\otimes u$, for any $u_1\in V$. Then the double extension $\hat\frb=T_\varphi(\frb,B_\frb,\bF d)=\bF d\ltimes_{\bar\varphi}(\frb\oplus \bF d^*)$ ($d^*\in (\bF d)^*$ with $d^*(d)=1$)
is a solvable Lie algebra of dimension $6$, endowed with an invariant scalar product $B_{\hat\frb}$ such that $B_{\hat\frb}(\bF d+ \bF d^*,\frb)=0$, $B_{\hat\frb}\vert_\frb =B_\frb$ and $B_{\hat\frb}(d,d^*)=1$, $B_{\hat\frb}(d,d)=0=B_{\hat\frb}(d^*,d^*)$.

Consider now the Lie algebra homomorphism $\phi:\frsl(V)\rightarrow \Der(\hat\frb,B_{\hat\frb})$ given by $\phi(s)(\bF d+\bF d^*)=0$, $\phi(s)(u_1\otimes u_2)=s(u_1)\otimes u_2$ for any $s\in \frsl(V)$ and $u_1,u_2\in V$. The double extension $T_{\phi}(\hat\frb,B_{\hat\frb},\frsl(V))$ is an irreducible quadratic Lie algebra, with a Levi subalgebra isomorphic to $\frsl_2(\bR)$ and such that, as a module for this subalgebra, it decomposes as the direct sum of two copies of the adjoint module, two copies of the natural two-dimensional module, and two copies of the trivial one-dimensional module. Moreover, the subalgebra formed by the two copies of the trivial one-dimensional module do not act diagonally nor nilpotently on the sum of the two natural modules. \qed
\end{example}

\begin{example}\label{ex:real_dregular}
Let $W$ be the natural (four-dimensional) module for $\frsu_2(\bR)\cong\frsu(W,h)$, endowed with the invariant bilinear form $B_W$ in \eqref{eq:B_W}. Recall that $W$ is a two-dimensional complex vector space endowed with an hermitian form $h$. Consider $W$ as an abelian Lie algebra.
Let $\varphi:\bR d\rightarrow \Der(W,B_W)$ be the Lie algebra homomorphism such that $\varphi(d)(w)=\bi w$ ($\bi$ denotes the imaginary unit in $\bC$). Then the double extension $\hat W=T_\varphi(W,B_W,\bF d)=\bR d\ltimes_{\bar\varphi}(W\oplus \bR d^*)$ ($d^*\in (\bR d)^*$ with $d^*(d)=1$)
is a solvable Lie algebra of dimension $6$, endowed with an invariant scalar product $B_{\hat W}$ such that $B_{\hat W}(\bR d+ \bR d^*,W)=0$, $B_{\hat W}\vert_\frb =B_W$ and $B_{\hat W}(d,d^*)=1$, $B_{\hat W}(d,d)=0=B_{\hat W}(d^*,d^*)$.

Consider now the homomorphism $\phi:\frsu(W,h)\rightarrow \Der(\hat W,B_{\hat W})$ given by $\phi(x)(\bR d+\bR d^*)=0$, $\phi(x)(w)=x(w)$ for any $x\in \frsu(W,h)$ and $w\in W$. The double extension $T_{\phi}(\hat W,B_{\hat W},\frsu(W,h))$ is an irreducible quadratic Lie algebra, with a Levi subalgebra isomorphic to $\frsu_2(\bR)$ and such that, as a module for this subalgebra, it decomposes as the direct sum of two copies of the adjoint module, one copy of the natural four-dimensional module, and two copies of the trivial one-dimensional module. \qed
\end{example}

\begin{proposition}\label{pr:real_dimJ_2}
Let $({\frg}= \frs\oplus {\frr},B)$ be a non solvable and non simple irreducible quadratic real Lie algebra of dimension $\leq 13$, where $\frs$ is isomorphic to $\frsl_2(\bR)$ or $\frsu_2(\bR)$ and $\frr$ is the solvable radical of $\frg$, with $\frr\ne \frr^\perp$.

If, as a module for $\frs$, $\frg$ contains two copies of the adjoint module, then one of the following possibilities hold:
\begin{itemize}
\item If $\frs$ is isomorphic to $\frsl_2(\bR)$, then $\frg$ is isomorphic to the Lie algebra in one of the Examples \ref{ex:V1V1}, \ref{ex:V1V1ddiagonal}, \ref{ex:real_dnotdiagonal}, \ref{ex:V1V1dnilpotent} or \ref{ex:V1V1d}.
\item If $\frs$ is isomorphic to $\frsu_2(\bR)$, then $\frg$ is isomorphic to the Lie algebra in one of the Examples \ref{ex:real_W} or \ref{ex:real_dregular}
\end{itemize}
\end{proposition}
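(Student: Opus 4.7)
\emph{Proof plan.} The plan is to mirror the argument of Proposition~\ref{pr:dimJ_2}, separating the split Levi case $\frs\cong\frsl_2(\bR)$ and the compact Levi case $\frs\cong\frsu_2(\bR)$. In both cases Proposition~\ref{pr:conditions_g} and Lemma~\ref{le:J_T} hold (the Jordan identities survive scalar extension to $\bC$), so the hypothesis of two copies of the adjoint module forces $\calJ=\bR 1\oplus \bR c$ with $c^2=0$ and $\frr^\perp=\frs\otimes c$.

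\emph{Split case.} I would copy the argument of Proposition~\ref{pr:dimJ_2} almost verbatim down to the point where one has $\dim\calT=2$, $\frd$ abelian of dimension at most $2$, and the induced Lie algebra homomorphism $\frd\to\frsp(\calT,B_\calT)\cong\frsl_2(\bR)$. The only new feature over $\bR$ is that nonzero one-dimensional subalgebras of $\frsl_2(\bR)$ fall into \emph{three} conjugacy classes instead of two: hyperbolic, parabolic, and elliptic. These alternatives yield respectively Examples~\ref{ex:V1V1ddiagonal}, \ref{ex:V1V1dnilpotent}, and the new Example~\ref{ex:real_dnotdiagonal}. The case $\frd=0$ produces Example~\ref{ex:V1V1}, and the degenerate parabolic subcase where $\dim\frd=1$ and $d=d_{x,x}$ reproduces Example~\ref{ex:V1V1d}, exactly as in the algebraically closed treatment.

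\emph{Compact case.} I would first rule out the decomposition \eqref{eq:gsuWd}: if no copy of $W$ appears then $\frsu(W,h)\otimes\calJ$ is centralized by $\frd$, hence is a nondegenerate ideal, contradicting irreducibility together with $\frr\ne\frr^\perp$. So we are in \eqref{eq:gsuWWd}, with exactly one copy of $W$, $\dim\calJ=2$, and $\dim\frd\leq 3$. The $\frs$-invariance and the uniqueness (up to scalar) of the map $\Gamma$ in \eqref{eq:Gamma} force the $\frs\otimes\calJ$-component of $[W,W]$ to be $\epsilon\,\Gamma(\cdot,\cdot)\otimes c$ after rescaling $c$, with $\epsilon\ne 0$ by irreducibility. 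The argument of Proposition~\ref{pr:dimJ_2} through Proposition~\ref{LEM} still shows that $\frd$ is abelian. The key new ingredient is the classification of the possible $\frd$-actions on $W$: since $\frd$ is $\frs$-central, this action lands in $\End_{\frsu(W,h)}(W)\cap\frso(W,B_W)$. Because $W\cong\bC^2$ is the natural $\frsu(W,h)$-module, the commutant is $\bC$ acting by scalar multiplication, and intersecting with $B_W$-skew operators leaves the one-dimensional real line $\bR\bi$. Hence the image of $\frd\to\End(W)$ has dimension at most one and, when nonzero, is spanned by multiplication by $\bi$. Using that the kernel of this map is contained in the centre of $\frg$, together with the isotropy of the centre of an irreducible quadratic Lie algebra and the non-degeneracy of $B\vert_\frd$, one concludes $\dim\frd\leq 2$, and a short calculation with the invariance identity $B(d(w_1),w_2)=B(d,[w_1,w_2])$ plus Jacobi on $W\times W\times\bR d$ rules out $\dim\frd=1$ (it is inconsistent with the simultaneous non-degeneracy of $B\vert_W$ and $B\vert_\frd$). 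The two remaining possibilities $\frd=0$ and $\dim\frd=2$ with elliptic image produce Examples~\ref{ex:real_W} and \ref{ex:real_dregular}, respectively.

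\emph{Main obstacle.} The hardest step is the compact case, where one has to track carefully the complex structure on $W$ (which makes $\End_{\frsu(W,h)}(W)$ equal to $\bC$ rather than $\bR$), the reality of $B_W$, and the cross-check ruling out $\dim\frd=1$. Once these real-form subtleties are handled, $\frs$-invariance together with the identities of Proposition~\ref{pr:conditions_g} pin down the remaining brackets, so in each surviving subcase $\frg$ is uniquely determined up to isomorphism and matches exactly one of the examples in the statement.
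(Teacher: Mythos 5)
Your split case is essentially the paper's own argument: the trichotomy for the image of $d$ in $\frsp(\calT,B_\calT)\cong\frsl_2(\bR)$ (diagonalizable, nilpotent, or faithful non-diagonalizable) is exactly how the paper sorts the case $\frs\cong\frsl_2(\bR)$ into Examples \ref{ex:V1V1ddiagonal}, \ref{ex:V1V1dnilpotent}/\ref{ex:V1V1d} and \ref{ex:real_dnotdiagonal}, with $\frd=0$ giving Example \ref{ex:V1V1}. The compact case, however, contains a genuine error, precisely at the step you single out as the hardest. The commutant $\End_{\frsu(W,h)}(W)$ of the $\bR$-linear action on the real four-dimensional space $W$ is \emph{not} $\bC$: it is the quaternion algebra $\bH$, as the paper states explicitly, because $W$ is a real irreducible module of quaternionic type --- besides the $\bC$-linear scalars, the $\bC$-antilinear quaternionic structure map commutes with $\frsu(W,h)$. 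Your appeal to Schur's lemma over $\bC$ only accounts for $\bC$-linear operators, but an element of $\frd$ acts merely $\bR$-linearly. Consequently the $B_W$-skew part of the commutant is the three-dimensional space of purely imaginary quaternions (a copy of $\frsu_2(\bR)$), not the line $\bR\bi$, and your conclusion that the image of $\frd\rightarrow\End(W)$ is at most one-dimensional and spanned by multiplication by $\bi$ does not follow from what you wrote: a priori $\hat d$ could be (right) multiplication by $j$ or any other imaginary quaternion. The repair is the paper's route: a nonzero skew commuting operator $\hat d$ is a nonzero imaginary quaternion, so $\hat d^2$ is a negative scalar multiple of $\id$; after scaling, $\hat d^2=-\id$, and since all unit imaginary quaternions are conjugate under automorphisms of $W$ as an $\frsu(W,h)$-module, one may assume without loss of generality that $\hat d$ is multiplication by $\bi$. (The dimension bound on the image then follows from this normalization, or from the fact that an abelian subalgebra of $\mathrm{Im}\,\bH\cong\frsu_2(\bR)$ is at most one-dimensional, using that $\frd$ is abelian.)

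A secondary, smaller gap: your exclusion of $\dim\frd=1$ in the compact case is only asserted (``a short calculation \dots rules out''), and the hint you give --- an inconsistency between nondegeneracy of $B\vert_W$ and $B\vert_\frd$ --- does not obviously produce a contradiction (the invariance and abelianness identities one writes down are in fact mutually consistent at that level). The paper instead complexifies and re-runs the faithful case of Proposition \ref{pr:dimJ_2}: there $d_{x,x}=d_{y,y}=0$ is forced by abelianness, $d_{x,y}$ kills $\calT$, and $d_{x,y}\ne 0$ because otherwise $(\frsl(V)\otimes\calJ)\oplus(V\otimes\calT)$ would be a nondegenerate proper ideal; hence $d_{x,y}$ spans the kernel of $\frd\rightarrow\frsp(\calT,B_\calT)$ and $\frd=\bF d+\bF d_{x,y}$ is two-dimensional. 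In particular, if $\frd$ were $\bR d$ with $d$ acting faithfully, the nonzero element $d_{x,y}$ would be a multiple of $d$ acting both trivially and faithfully on $W$, a contradiction. With these two repairs your outline coincides with the paper's proof; the remaining ingredients (ruling out \eqref{eq:gsuWd} via $\frr\ne\frr^\perp$, normalizing the $\frsu(W,h)\otimes c$-component of $[W,W]$ to $\Gamma(\cdot,\cdot)\otimes c$ after rescaling $c$, and uniqueness via Remark \ref{re:double_extension} when $\frd=0$) are handled correctly.
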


\begin{proof}
Extending scalars to $\bC$, the arguments in the proof of Proposition \ref{pr:dimJ_2} show that either $\frs$ is isomorphic to $\frsl_2(\bR)\cong\frsl(V)$, for a two-dimensional vector space $V$, and we are in the situation of \eqref{eq:gslVVd} with $\dim\calJ=2=\dim\calT$, with $\calJ$ isomorphic to $\bR[X]/(X^2)$, and $\frd$ abelian with $\dim\frd\leq 2$; or $\frs$ is isomorphic to $\frsu_2(\bR)\cong\frsu(W,h)$ and we are in the situation of \eqref{eq:gsuWd} or \eqref{eq:gsuWWd}, with $\calJ\cong\bR[X]/(X^2)$ and $\frd$ abelian with $\dim\frd\leq 2$.

If $\frs$ is isomorphic to $\frsl_2(\bR)$, the proof of Proposition \ref{pr:dimJ_2} works and either $\frd=0$, thus obtaining that $\frg$ is isomorphic to the Lie algebra in Example \ref{ex:V1V1}, or there is an element $d\in\frd$ such that $d(\calT)\ne 0$. Now, if the action of $d$ on $\calT$ is nilpotent, we get that $\frg$ is isomorphic either to the Lie algebra in Example \ref{ex:V1V1dnilpotent} or in Example \ref{ex:V1V1d} as in the proof of Proposition \ref{pr:dimJ_2} (with $\bF=\bR$). However, if $d$ acts faithfully on $\calT$, either $d$ acts diagonally and we get, up to isomorphism, the Lie algebra in Example \ref{ex:V1V1ddiagonal}, or we may scale $d$ and take a basis $\{x,y\}\in\calT$ with $B_\calT(x,y)=1$, $d(x)=y$ and $d(y)=-x$. Then $[d,d_{x,x}]=d_{d(x),x}+d_{x,d(x)}=2d_{x,y}$, so $d_{x,y}=0$ as $\frd$ is abelian. Also, $[d,d_{x,y}]=d_{d(x),y}+d_{x,d(y)}=d_{y,y}-d_{x,x}$, so $d_{x,x}=d_{y,y}$. But Proposition \ref{pr:conditions_g} shows that $d_{x,x}(y)=d_{y,x}(x)=d_{x,y}(x)=0$, $d_{x,x}(x)=d_{y,y}(x)=d_{x,y}(x)=0$, and hence $d_{x,x}(\calT)=0$. As in the proof of Proposition \ref{pr:dimJ_2} we have $d_{x,x}\ne 0$, and $\bR d_{x,x}$ is the kernel of the homomorphism $\frd\rightarrow \frsp(\calT,B_\calT)$. It follows that $\frd=\bR d+\bR d_{x,x}$, and the Lie bracket of $\frg$ is completely determined, so that $\frg$ is isomorphic to the Lie algebra in Example \ref{ex:real_dnotdiagonal}.

Hence, assume for the rest of this proof that $\frs$ is isomorphic to $\frsu_2(\bR)$. If $\frd=0$, then $\frg$ is a double extension of the abelian Lie algebra $\frr/\frr^\perp\simeq W$ by $\frsu(W,h)$, and by Remark \ref{re:double_extension} there is a unique possibility, up to isomorphism, so we obtain the Lie algebra in Example \ref{ex:real_W}. If $\frd\ne 0$, our Lie algebra $\frg$ can be written as in \eqref{eq:gsuWWd}, and as in Proposition \ref{pr:dimJ_2} there is an element $d\in \frd$ such that $[d,W]\ne 0$. But the action of $d$ commutes with the action of $\frsu(W,h)\otimes 1$. The centralizer of the action of $\frsu(W,h)$ on $W$ is isomorphic to the algebra of quaternions, and the action of $d$ leaves invariant the bilinear form $B_W$ so its trace is $0$. Hence we may scale $d$ and assume that the image $\hat d$ of $d$ in $\End(W)$ satisfies $\hat d^2=-\id$. Therefore, we may assume without loss of generality that the action of $d$ is the multiplication on $W$ by the imaginary unit $\bi$. In particular, $d$ acts faithfully on $W$. By extending scalars to $\bC$, the proof of Proposition \ref{pr:dimJ_2} shows that $\frd$ is a two-dimensional abelian Lie algebra, and the kernel of the action of $\frd$ on $W$ is one-dimensional. We conclude that the Lie bracket is completely determined and hence our Lie algebra $\frg$ is isomorphic to the Lie algebra in Example \ref{ex:real_dregular}.
\end{proof}

\smallskip

Our last result, obtained putting together all the previous work, and taking into account the simple Lie algebras over $\bR$ of dimension $\geq 13$, summarizes the classification in the real case:

\begin{theorem}
The complete list, up to isomorphisms, of the non solvable irreducible quadratic real Lie algebras $\frg$ with $\dim\frg\leq 13$ is the following:
\begin{enumerate}
\item $\dim\frg=3$: the simple Lie algebras $\frsl_2(\bR)$ and $\frsu_2(\bR)$,
\item $\dim\frg=6$: the real simple Lie algebra $\frsl_2(\bC)$, and the trivial $T^*$-extensions $T_0^\star(\frsl_2(\bR))$ and $T_0^\star(\frsu_2(\bR))$,
\item $\dim\frg=8$: the simple Lie algebras $\frsl_3(\bR)$, $\frsu_3(\bR)$ and $\frsu_{2,1}(\bR)$,
\item $\dim\frg=9$: the `scalar extensions' $\frsl_2(\bR)\otimes_\bR \bR[X]/(X^3)$ and $\frsu_2(\bR)\otimes_\bR \bR[X]/(X^3)$,
\item $\dim\frg=10$: the simple Lie algebras $\frso_5(\bR)$, $\frso_{4,1}(\bR)$ and $\frso_{3,2}(\bR)$, and the Lie algebras in Examples \ref{ex:V1V1} and \ref{ex:real_W},
\item $\dim\frg=11$: the double extensions $\frd(4)$ and $\check\frd(4)$, and the Lie algebra in Example \ref{ex:V1V1d},
\item $\dim\frg=12$: the `scalar extensions' $\frs\otimes\bR\calA$, with either $\frs=\frsl_2(\bR)$ or $\frsu_2(\bR)$ and with $\calA$ equal to $\bR[X]/(X^4)$, $\bR[X,Y]/(X^2,Y^2)$ or $\bR[X,Y]/(X^3,Y^3,X^2-Y^2)$, the Lie algebras in Examples \ref{ex:V1V1ddiagonal}, \ref{ex:V1V1dnilpotent}, \ref{ex:real_dnotdiagonal}, \ref{ex:real_dregular}, and the trivial $T^\star$-extension $T_0^\star(\frsl_3(\bC))$.
\item $\dim\frg=13$: the double extensions $\frd(6)$ and $\check\frd(6)$,  and the Lie algebras in Examples \ref{ex:V2V1V1} and \ref{ex:real_W2V1}.
\end{enumerate}
\end{theorem}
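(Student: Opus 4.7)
The plan is to assemble the classification by a systematic case analysis on the dimension and on the structure of the Levi decomposition $\frg=\frs\oplus\frr$. First, if $\frg$ is simple, one simply lists the real simple Lie algebras of dimension at most $13$: in dimension $3$ we have $\frsl_2(\bR)$ and $\frsu_2(\bR)$; in dimension $6$ the only simple one is the realification $\frsl_2(\bC)$; in dimension $8$ the three real forms $\frsl_3(\bR),\frsu_3(\bR),\frsu_{2,1}(\bR)$ of $A_2$; and in dimension $10$ the three real forms of $B_2$, namely $\frso_5(\bR),\frso_{4,1}(\bR),\frso_{3,2}(\bR)$. No simple real Lie algebra exists in dimensions $4,5,7,9,11,12,13$ (the next ones occur in dimension $14$ and $15$), so all non-simple entries come from the non-simple analysis.

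Next, for the non-simple irreducible case, I would split according to whether $\frr=\frr^\perp$ or $\frr\ne\frr^\perp$. When $\frr=\frr^\perp$, Corollary \ref{co:trivialTextension} forces $\frg\cong T_0^\star(\frs)$ with $\frs$ simple; the available real simple algebras in dimension $\leq 6$ give exactly $T_0^\star(\frsl_2(\bR))$ and $T_0^\star(\frsu_2(\bR))$ in dimension $6$, and $T_0^\star(\frsl_2(\bC))$ in dimension $12$ (the realification of $\frsl_2(\bC)$ is the only $6$-dimensional real simple algebra). When $\frr\ne\frr^\perp$, Proposition \ref{pr:real01246} restricts $\frs$ to $\frsl_2(\bR)$ or $\frsu_2(\bR)$ and the irreducible summands of $\frr/\frr^\perp$ to a short list. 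I then apply Proposition \ref{pr:real46} when a copy of $V_\bR(4),V_\bR(6),W(4)$ or $W(6)$ appears (giving $\frd(4),\frd(6),\check\frd(4),\check\frd(6)$), and otherwise reduce to the situation where $\frg$ is built from copies of the adjoint, natural and trivial modules. In the latter setting I invoke Propositions \ref{pr:real_dimJ_4}, \ref{pr:real_dimJ_3} and \ref{pr:real_dimJ_2} according as the number of copies of the adjoint module is $4$, $3$ or $2$, producing exactly the scalar extensions $\frs\otimes_\bR \calA$ with $\calA\in\{\bR[X]/(X^3),\bR[X]/(X^4),\bR[X,Y]/(X^2,Y^2),\bR[X,Y]/(X^3,Y^3,X^2-Y^2)\}$, and the algebras of Examples \ref{ex:V2V1V1}, \ref{ex:real_W2V1}, \ref{ex:V1V1}--\ref{ex:V1V1d}, \ref{ex:real_W}, \ref{ex:real_dnotdiagonal} and \ref{ex:real_dregular}.

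The final task is to organise the output by dimension and to verify that the algebras appearing in the list are pairwise non-isomorphic. Most distinctions are immediate from structural invariants: the isomorphism type of the Levi subalgebra ($\frsl_2(\bR)$ vs.\ $\frsu_2(\bR)$, or a real form of $A_2$ vs.\ $B_2$), the decomposition type of $\frr$ as an $\frs$-module, the dimension of $\frr^\perp$ and whether $\frr=\frr^\perp$, the nilpotency class of $\frr$, and the way the toral part $\frd$ acts (diagonally, nilpotently, or by a rotation) on the natural modules. The main obstacle, and the one requiring a genuine argument, is that the two algebras $\check\frg^{\pm 1}$ of Example \ref{ex:real_W2V1} are not isomorphic: this is precisely the content of Lemma \ref{le:gepsilon}, settled by computing the signatures $(7,6)/(6,7)$ and $(3,10)/(10,3)$ of their invariant scalar products. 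Once that distinction is in place, the theorem follows by concatenating the classifications of the preceding propositions and arranging the resulting algebras by total dimension.
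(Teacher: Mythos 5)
Your proposal is correct and takes essentially the same approach as the paper, which obtains this theorem with no further argument by concatenating Corollary~\ref{co:trivialTextension}, Propositions~\ref{pr:real01246}, \ref{pr:real46}, \ref{pr:real_dimJ_4}, \ref{pr:real_dimJ_3} and \ref{pr:real_dimJ_2}, Lemma~\ref{le:gepsilon}, and the list of real simple Lie algebras of dimension $\leq 13$ --- exactly the assembly you describe. One remark: your entry $T_0^\star(\frsl_2(\bC))$ in dimension $12$ is the correct reading, the $T_0^\star(\frsl_3(\bC))$ appearing in the statement being evidently a typo, since $\frsl_3(\bC)$ has real dimension $16$ and its trivial $T^\star$-extension dimension $32$.
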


\medskip

\section{Conclusion and open question}

By means of our classification above, we can give easily  the list (up isomorphism) of the perfect quadratic Lie algebras of dimension $\leq 13$. Moreover, our classification reduces the classification of the not necessarily irreducible, non solvable, quadratic Lie algebras of dimension $\leq 13$ to the classification of the solvable quadratic Lie algebras of dimension $\leq 7$. Recall that the classification of the nilpotent quadratic Lie algebras of dimension $\leq 7$ is obtained in \cite{FavS},
but the classification of the solvable non nilpotent quadratic Lie algebras of dimension $\leq 7$ is still an open question.


\end{document}